\documentclass[11pt,reqno]{amsart}
\usepackage{fancyhdr}
\usepackage{amssymb}
\usepackage{euscript}

\usepackage{amsmath,mathtools}
\usepackage{keytheorems}
\usepackage[usenames,dvipsnames]{color}
\usepackage[kerning=true]{microtype}
\usepackage[all,cmtip]{xy}
\usepackage{pgf,tikz}
\usepackage{ifthen}
\usepackage{wrapfig}
\usepackage{colortbl}
\usepackage{hhline}
\usetikzlibrary{decorations.markings,shadows}
\usetikzlibrary{arrows,shapes,positioning,calc,shadows}
\tikzstyle arrowstyle=[scale=1]
\tikzstyle directed=[postaction={decorate,decoration={markings,
    mark=at position .5 with {\arrow[arrowstyle]{stealth}}}}]
\usetikzlibrary{shapes.geometric}
\usepackage{fancybox}
\usetikzlibrary{shadows.blur}
\usepackage[most]{tcolorbox}
\usepackage{fullpage}
\usepackage{amsthm}
\usepackage{mathabx}
\usepackage{shuffle}
\usepackage{mathrsfs}

\usepackage{ytableau}
\ytableausetup{textmode, boxsize=.6cm}

\usepackage[font=small,figurename=Fig.]{caption}

\usetikzlibrary{backgrounds}
\usepackage{subcaption}

\usepackage[colorlinks=true, citecolor=cyan,urlcolor=blue,linktocpage=true,
	pagebackref,hyperindex=true,hypertexnames=true,hyperfigures=true]{hyperref} 

\definecolor{azure}{rgb}{0.,0.6,1.0} 
\definecolor{vertfonce}{rgb}{0.,0.7,0.} 
\definecolor{jaunefonce}{rgb}{1,.6,0} 
\newcommand\rouge{\textcolor{red}}
\newcommand\bleu{\textcolor{azure}}

\newtheorem{lemma}{\bleu{Lemma}}
\newtheorem{proposition}{\bleu{Proposition}}

\newtheorem{conjecture}{Conjecture}

\theoremstyle{remark}

\newcommand\autorefname[2]{\hyperref[#2]{#1~\ref*{#2}}}
\newcommand\autorefdef[1]{\hyperref[#1]{Definition~\ref*{#1}}}
\newcommand\autorefformula[1]{\hyperref[#1]{Formula~\ref*{#1}}}
\newcommand\autorefrec[1]{\hyperref[#1]{Recurrence~\ref*{#1}}}

\newcommand\unedefinition[1]{\bleu{\textbf{#1}}}
\newcommand{\newatop}[2]{\genfrac{}{}{0pt}{}{#1}{#2}}

\newcommand{\A}{\mathcal{A}}
\newcommand{\area}{\mathrm{area}}

\newcommand{\Anti}[1]{\widetilde{#1}}
\newcommand{\asc}{\mathrm{asc}}

\newcommand\cell{{\square}}
\newcommand\Cat{\mathcal{C}}
\newcommand\CatRec{\mathscr{C}}
\newcommand\CatNab{\EuScript{C}}

\newcommand\charac{\raise 2pt\hbox{\large$\chi$}}
\newcommand\scharac{\raise 2pt\hbox{$\scriptstyle \chi$}}

\newcommand{\des}{\mathrm{des}}
\newcommand{\DDesc}{\mathbb{D}}
\newcommand{\D}{\mathcal{D}}
\newcommand{\dinv}{\mathrm{dinv}}
\newcommand{\Der}{\mathrm{Der}}

\newcommand{\exc}{\mathrm{exc}}

\newcommand{\eint}[1]{[#1]}
\newcommand{\efact}[1]{[#1]!}
\newcommand{\ebinom}[2]{\genfrac{[}{]}{0pt}{}{#1}{#2}}

\newcommand\GL{\mathrm{GL}}

\newcommand\inv{\mathrm{inv}}

\newcommand\lcat{\widetilde{\mathcal{C}}}
\newcommand{\lint}[1]{\{#1\}}
\newcommand{\lfact}[1]{\{#1\}!}
\newcommand{\lbinom}[2]{\genfrac{\{}{\}}{0pt}{}{#1}{#2}} 
\newcommand{\lp}[1]{\Anti{p}_{#1}}
\newcommand{\lvarphi}[1]{\Anti{\varphi}_{#1}}

\newcommand{\maj}{\mathrm{maj}}
\newcommand{\meint}[1]{[#1]^{{\scriptscriptstyle (M)}}}
\newcommand{\mlint}[1]{\{#1\}^{{\scriptscriptstyle (M)}}}

\newcommand\N{\mathbb{N}}

\renewcommand\P{\mathbb{P}}

\newcommand\QQ{\mathbb{Q}}

\newcommand{\qtbinom}[2]{\genfrac{[}{]}{0pt}{}{#1}{#2}_{qt}}

\newcommand{\qtfact}[1]{[#1]{\lower4pt\hbox{$\stackrel{\textstyle !}{{}_{qt}}$\,}}}
\newcommand{\qtint}[1]{[#1]_{qt}}

\newcommand{\scalar}[2]{{\langle#1,#2 \rangle}}
\newcommand{\Sym}{\mathbb{S}}
\newcommand{\SYT}{{\textsc{syt}}}

\newcommand{\WCoxeter}{{\scriptscriptstyle W}}

\newcommand{\bx}{\boldsymbol{x}}

\newcommand\Z{\mathbb{Z}}

\numberwithin{equation}{section}
\numberwithin{theorem}{section}
\numberwithin{lemma}{section}
\numberwithin{proposition}{section}
\numberwithin{corollary}{section}
\numberwithin{definition}{section}
\numberwithin{conjecture}{section}
\numberwithin{figure}{section}

\title{A $(q,t)$-Overview of $q$-Analogs}
\author[F. Bergeron]{François Bergeron}
\address{Département de Mathématiques, Université du Québec à Montréal}
\urladdr{bergeron.math.uqam.ca}
\email{bergeron.francois@uqam.ca}
\subjclass[2010]{Primary 05E05, 05A30, 05A10; Secondary 05E10, 05E18 }
\keywords{$q$-analogs, Symmetric functions, Cyclotomic polynomials}
\thanks{This research was supported by a NSERC research grant}

\begin{document} 

\begin{abstract} By a simple homogenization process, one may turn a $q$-analog into a symmetric $(q,t)$-analog. Although simple, this bijective correspondence makes many concepts and identities become more natural, and proofs follow readily from classical properties of symmetric functions in two variables. A more intricate non-homogeneous extension is also developed. We illustrate this approach, revisiting recent developments in the study of $\gamma$-positivity, Lucas analogues, and the monoid of Cyclotomic generating functions. This also leads naturally to new constructions and conjectures. We further explore other avenues of investigations, included graded and equivariant $\gamma$-positivity and $\gamma$-anti-positivity (also known as alternatingly $\gamma$-positive).
 \end{abstract}

\maketitle

 \parskip=0pt
{ \setcounter{tocdepth}{1}\parskip=2pt\parindent=10pt\footnotesize \tableofcontents}
\parskip=6pt  
\parindent=20pt

\section{Introduction}
Just as for $q$-analogues, a polynomial $\alpha(q,t)\in \N[q,t]$ is a $(q,t)$-\unedefinition{analog} of an integer $n$, if $\alpha(1,1)=n$. We mostly consider here the ``simple'' case when $\alpha$ is a homogeneous polynomial, which is also symmetric in $q$ and $t$. For many classical $q$-analogs we may construct such a simple $(q,t)$-analog by simple homogenization. Thus, for any degree $d$ polynomial $a(q)\in\N[q]$ that is \unedefinition{palindromic} (or \unedefinition{self-reciprocal}), {\sl i.e.} such that $a(q) = q^d a(1/q)$, one obtains its $(q,t)$-version by setting
  $$\alpha(q,t) : = t^d a(q/t) \qquad \text{(homogenization)}.$$ 
As both $a(q)$ and its associated $a(q,t)$-analog carry the same information, there is nothing deep added by carrying out this homogenization. However, we will see that this brings clarity\footnote{Maybe reminiscent of  the effect of introducing homogeneous coordinates in projective geometry .} to many constructions involved in the study of $q$-analogues. Furthermore, this natural approach has been exploited in other contexts, see for instance in \cite{Ratliff2004} for Commutative Algebra or \cite{Fouvry2018} for Number Theory. We also briefly discuss the more subtle non-homogeneous case, which suggests many new directions of inquiry. 

When considered as $(q,t)$-symmetric polynomials, being unimodal directly corresponds to Schur-positivity, and $\gamma$-positivity corresponds to positivity in the elementary basis expansion. This natural approach can thus be of interest for many families of $\gamma$-positive polynomials that have recently been considered in a variety of papers \cite{AthanasiadisGamma,barrygammavectors,brittenham2016,Dey2020,Han2021,liao2026equivariant,ShareshianWachs}, as well as in the monograph \cite{Petersen2015}. Also tied to these lines of inquiry is the study of cyclotomic generating functions (see \cite{Billey2024,Gatzweiler}) which we link to the study of Lucas atoms (see \cite{Alecci2025,Bennett_2020,sagan2020lucas-17f}) via a natural isomorphism.

As several aspects of our approach consist of direct reformulations, it is naturally tied with previous work. We have tried to make this as explicit as possible with an extensive bibliography, referring to existing results that are essentially equivalent to those stated here. It is most probable that some references are missing. 


\section{Classical formulas and background} 
Typically, one starts with the $(q,t)$-\unedefinition{integers} $[n]_{qt}:=\frac{q^n-t^n}{q-t}=q^{n-1}+q^{n-2}t+\ldots+q\,t^{n-2}+t^{n-1}$ (with $[0]_{qt}:=0$), to subsequently introduce the $(q,t)$-\unedefinition{factorials}
 \begin{align}\label{qt_factorial}
   \qtfact{n}:= \begin{cases}
    [1]_{qt}[2]_{qt}\cdots [n]_{qt}  & \text{ if},\ n>0 \\[4pt]
      1 & \text{ if},\ n=0 .
\end{cases}
\end{align}
For instance:
\begin{align*}
&\qtfact{1}= 1,\\
&\qtfact{2}= q + t,\\
&\qtfact{3} = q^{3} + 2 q^{2} t + 2 q t^{2} + t^{3},\\
&\qtfact{4}= q^{6} + 3 q^{5} t + 5 q^{4} t^{2} + 6 q^{3} t^{3} + 5 q^{2} t^{4} + 3 q t^{5} + t^{6},\\
&\qtfact{5} = q^{10} + 4 q^{9} t + 9 q^{8} t^{2} + 15 q^{7} t^{3} + 20 q^{6} t^{4} + 22 q^{5} t^{5} + 20 q^{4} t^{6} + 15 q^{3} t^{7} + 9 q^{2} t^{8} + 4 q t^{9} + t^{10}.
\end{align*}
Next in line, one considers the $(q,t)$-\unedefinition{binomial coefficients} ({\sl aka}  $(q,p)$-binomials see \cite{Corcino}) directly defined as:
\begin{align}\label{qt_binomiaux}
 \qtbinom{n}{k} 
  	&:=\frac{\qtfact{n}}{\qtfact{k}\qtfact{n-k}},
 \end{align}
To check that these are indeed polynomials with positive integers, one simply verifies that they satisfy the  $(q,t)$-\unedefinition{Pascal triangle recurrence}
\begin{align}\label{q_Pascal_triangle}
     \qtbinom{n+k}{k} =q^k\qtbinom{n+k-1}{k}+t^{n} \qtbinom{n+k-1}{k-1},
\end{align}
with initial conditions $\qtbinom{n}{0} = \qtbinom{n}{n} =1$. From a combinatorial perspective, this $(q,t)$-Pascal recurrence may be understood in terms of the ``area-coarea'' enumeration of partitions contained in the $(n\times k)$ rectangle: 
\begin{align}\label{comb_qt_binomial}
   \qtbinom{n+k}{k}=\sum_{\alpha\subseteq (n^k)}q ^{|\alpha|} t^{nk-|\alpha|}.
\end{align}
As illustrated in \autorefname{Fig.}{Fig_Pascal}, the right-hand side of \autoref{comb_qt_binomial} is seen to satisfy \autoref{q_Pascal_triangle} by decomposing  the sum in two parts: with partitions having exactly $k$ parts on one hand, and those that have less than $k$ parts on the other. Some small values of $(q,t)$-binomial coefficients are given in \autoref{pascal_table}.\ytableausetup{aligntableaux=bottom,boxsize=.5cm}
\begin{small}
\begin{figure}
\begin{align*}
&
\hbox{\includegraphics[scale=.6]{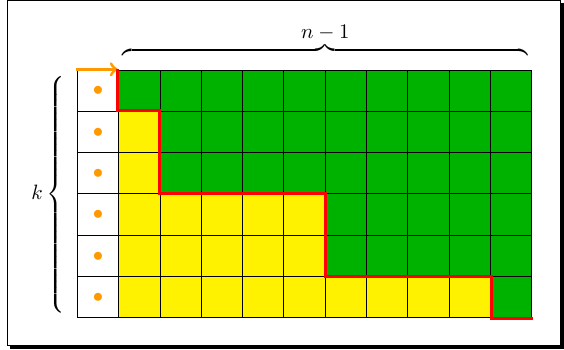}}
&&
{\includegraphics[scale=.6]{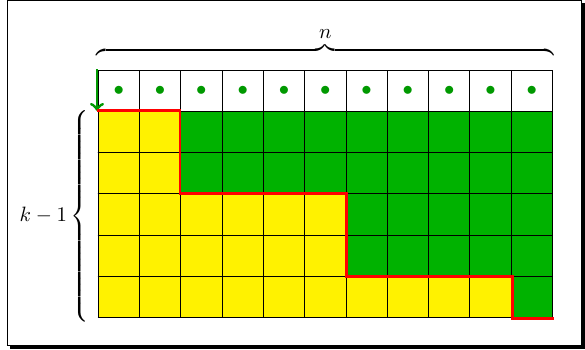}}\\
&\qquad\qquad \textcolor{orange}{q^k}\ \qtbinom{(n-1)+k}{k} &+&\qquad\qquad\textcolor{green!60!black}{t^n}\qtbinom{n+(k-1)}{(k-1)}
\end{align*}
\caption{The $(q,t)$-pascal triangle recurrence.}\label{Fig_Pascal}
\end{figure}
\end{small}


\begin{table}[ht]
  \centering 
\renewcommand{\arraystretch}{1.2}
\begin{small}
\begin{tabular}{|c||l|l|l|l|l|}
\hline
$\textstyle \qtbinom{n+k}{k}$ & $0$ & $1$ & $2$ & $3$\\[4pt] 
\hline
\hline
$0$ & $1$ & $1$ & $1$ & $1$ \\ 
\hline
$1$ & $1$ & $q + t$ & $\bleu{q^{2} + q t + t^{2}}$ 
		&  $q^{3} + q^{2} t + q t^{2} + t^{3}$  
           \\ 
\hline
$2$ & $1$ & $\rouge{q^{2} + q t + t^{2}}$  
		& $\begin{aligned} &q^2\bleu{(q^{2} + q t + t^{2})}\\ &\ +t^2\rouge{(q^{2} + q t + t^{2})}\end{aligned}$ 
		& $\begin{scriptstyle}\begin{aligned}\\[-2pt]   q^{6}&+ q^{5} t + 2 q^{4} t^{2} + 2 q^{3} t^{3}\\ 
											& + 2 q^{2} t^{4} + q t^{5} + t^{6}\\   \end{aligned}\end{scriptstyle}$ 
								       \\
\hline
$3$ & $1$ & $q^{3} + q^{2} t + q t^{2} + t^{3}$  
		& $\begin{scriptstyle}\begin{aligned}  q^{6}&+ q^{5} t + 2 q^{4} t^{2} + 2 q^{3} t^{3}\\ 
											& + 2 q^{2} t^{4} + q t^{5} + t^{6}   \end{aligned}\end{scriptstyle}$ 
		& $\begin{scriptstyle}\begin{aligned} \\ q^9&+ q^{8} t + 2 q^{7} t^{2}   \\ 
								&+ 3 q^{6} t^{3}+ 3 q^{5} t^{4} \\ 
								&+ 3 q^{4} t^{5} + 3 q^{3} t^{6} \\ 
								&+ 2 q^{2} t^{7} + q t^{8} + t^{9}\\   \end{aligned} \end{scriptstyle}$
								 \\
\hline
\end{tabular}
\end{small}
  \caption{$(q,t)$-Pascal triangle.}\label{pascal_table}
\end{table}
A $(q,t)$-version of the classical \unedefinition{Vandermonde identity}  goes as follows 
\begin{align}
   \qtbinom{a+b}{c} = \sum_{i+j=c} q^{j\,(a-i)} t^{i\,(b-j)}\qtbinom{a}{i}\qtbinom{b}{j}.
\end{align} 
Naturally, for $n=k_1+k_2+\ldots+k_j$, we may also consider the $(q,t)$-\unedefinition{multinomial coefficients}:
\begin{align}\label{q_multinomes}
 \qtbinom{n}{k_1,k_2,\ldots,k_j} 
  	&:=\frac{\qtfact{n}}{\qtfact{k_1}\qtfact{k_2}\cdots\qtfact{k_j}}.
 \end{align}
For upcoming developments, let us record the following equality (see \cite[Formulas 1 and 2]{Ma2024} and \cite[Page 8]{brittenham2016}): 
 \begin{align}
        \qtint{n+1} 
       			&= \sum_{0\leq 2k \leq n} \binom{n-k}{k} \rouge{(-qt)^k (q+t)^{n-2k}},\label{formule_qt_integer}
 \end{align}
and its clear corollary:
  \begin{align}
 	p_n(q,t)&=q^n+ t^n =\qtint{n+1}-qt\,\qtint{n-1}\nonumber\\
	             &=\sum_{1\leq 2k\leq n}\left( \binom{n-k}{k} + \binom{n-1-k}{k-1}\right) \rouge{(-qt)^k (q+t)^{n-2k}}, \label{qt_formule_power_sum}
 \end{align}
as they will play an interesting role in the sequel of our story. 
 
 \subsection{Further examples}
To better illustrate upcoming notions, let us also consider the following list of $(q,t)$-variant of classical formulas:
\begin{enumerate}
\item Partitions with distinct part sizes that are all less or equal to $n$:
\begin{align}\label{qt_Partition_distinct}
    \mathrm{Part}^\neq_{\leq n}(q,t) = \prod_{j=1}^n (q^j+t^j)
\end{align}
\item Alternating Sign Matrices:
\begin{align}\label{qt_ASM}
    \mathrm{ASM}_{n}(q,t) = \prod_{i=0}^{n-1}\frac{\eint{3i+1}_{qt}}{\eint{n+i}_{qt}}.
  \end{align}
\item Plane Partitions $\pi$ fitting inside a cube $a\times b\times c$ is:
\begin{align}\label{qt_PPP_abc}
      \mathrm{PP}_{abc}(q,t) := \sum_{\pi} q^{|\pi|} t^{abc-|\pi|}= 
      \prod_{i=1}^a\prod_{j=1}^b \prod_{k=1}^c \frac{\qtint{i+j+k-1}}{\qtint{i+j+k-2}}.
 \end{align} 
\item Totally Symmetric Plane Partitions fitting inside a cube $r\times r\times r$ :
\begin{align}\label{qt_TTSP}
    \mathrm{TSPP}_{r}(q,t) = \prod_{1\leq i\leq j\leq k\leq r}  \frac{\qtint{i+j+k-1}}{\qtint{i+j+k-2}}.
 \end{align}
 \item Standard Young Tableaux of shape $\mu$ (given by the \unedefinition{hook-length formula}):
\begin{align}
 \mathrm{SYT}_\mu(q,t)= \frac{\qtfact{n}}{\prod_{c\in \mu}{\qtint{\varepsilon_\mu(c)} }},
 \end{align}
 where $c\in\mu$ (also $\cell$) stands for $c$ being a cell of the partition $\mu$.
\item Semi-Standard Young Tableaux of shape $\mu$, with values in $\{1,2,\ldots, N\}$  (given by the \unedefinition{hook-content formula}):
\begin{align}\label{formula_SSYT}
  \mathrm{SSYT}^{{\scriptstyle N}}_\mu(q,t)=s_\mu(1,q,\ldots, q^{{\scriptstyle N}})= (qt)^{n(\mu)} \prod_{(i,j)\in \mu} \frac{\qtint{N+i-j}}{\qtint{\varepsilon_\mu(i,j)}},
 \end{align}
 with $n(\mu):=\sum_{(i,j)\in \mu} i$, and $\varepsilon_\mu(i,j)$ standing for the hook length of a cell $c=(i,j)$ of $\mu$.
  \end{enumerate}

\subsection{\texorpdfstring{$(q,t)$}{qt}-Catalan and \texorpdfstring{$(q,t)$}{qt}-Narayana} 

There are many different ``classical'' $q$-analogues or $(q,t)$-analogues of Catalan numbers, including the following: $ \Cat_n$, $\CatRec_{n}$ and $\CatNab_n$, described below. We first have the $(q,t)$-versions $ \Cat_n$ of the ``binomial formula'':
\begin{align}\label{def_qt_catalan_formula}
   \Cat_n(q,t):=\frac{1}{\qtint{n+1}} \qtbinom{2n}{n}.
  \end{align}
The second one, $\CatRec_{n}$, is obtained via the recurrence
 \begin{align}\label{def_qt_catalan_recurrence}
   \CatRec_{n+1}(q,t):=\sum_{k=0}^n q^k t^{n-k} \CatRec_{k}(q,t)\CatRec_{n-k}(q,t),
  \end{align}
 with initial condition $\CatRec_1=1$. The last is calculated via the Macdonald operator $\nabla$, or by enumeration of sub-partitions of the staircase partition $\delta_n = (n-1,\ldots,2,1)$ by $(\area,\dinv)$-statistics. It occurs as a bidegree enumerator of the alternating component of the module of diagonal harmonics  (see \cite{BergeronBook,haglund,haglund2015catalan} for more details). 
 \begin{align}
    \CatNab_n(q,t)&:=\scalar{ \nabla(e_n)}{e_n}\nonumber\\
    	&= \sum_{\alpha\subseteq \delta_n} q^{\binom{n}{2}-|\alpha|} t^{\dinv(\alpha)}. \label{def_qt_catalan_nabla}
   \end{align}
Here,  $e_n$ stands for the elementary symmetric function, and $\scalar{ -}{-}$ for the Hall scalar product (see \cite{Macdonald-Book-1995} for details). 
We have the specializations
  \begin{align}\label{qt_catalan_specializations}
       (qt)^{\binom{n}{2}}\, \CatNab_n\big(\textstyle \frac{q}{t},\frac{t}{q}\big) =  \Cat_n(q,t)\qquad \text{and}\qquad
       \CatNab_n(q,1) = \CatRec_{n+1}(q,1). 
  \end{align}
To compare the three, here are some small values:
  \begin{small}
  \begin{align*}
\Cat_{2}&=q^{2} + t^{2},
&& \CatRec_2=q + t,\\
&&& \CatNab_2  = q + t;
  \\[5pt]
\Cat_{3} & =q^{6} + q^{4} t^{2} + q^{3} t^{3} + q^{2} t^{4} + t^{6}, 
&&\CatRec_3  = (q^{3} + q^{2} t + q t^{2} + t^{3}) + q t, \\
&&& \CatNab_3  = (q^{3} + q^{2} t + q t^{2} + t^{3}) + q t;
\\[5pt]
\Cat_{4} & = q^{12} + q^{10} t^{2} + q^{9} t^{3} + 2 q^{8} t^{4} + q^{7} t^{5}
    &&\CatRec_4    = (q^{6} + q^{5} t + q^{4} t^{2} + 2 q^{3} t^{3} + q^{2} t^{4} + q t^{5} + t^{6}) \\
        &\ \ + 2 q^{6} t^{6} + q^{5} t^{7} + 2 q^{4} t^{8} + q^{3} t^{9} + q^{2} t^{10} + t^{12}, 
&&\qquad + (q^{4} t + q t^{4}) + (q^{3} t + 2 q^{2} t^{2} + q t^{3}), \\
    &&&\CatNab_4  =(q^{6} + q^{5} t + q^{4} t^{2} + q^{3} t^{3} + q^{2} t^{4} + q t^{5} + t^{6}) \\
    &&&\qquad  + (q^{4} t + q^{3} t^{2} + q^{2} t^{3} + q t^{4}) + (q^{3} t + q^{2} t^{2} + q t^{3}).
  \end{align*}
  \end{small}\noindent
  Notice that $\CatRec_n$ and $\CatNab_n$ are not homogeneous in general (see \autoref{graded_qt_analogs} below).  This makes their description more subtle, since one cannot recover them from a specialization at $t=1$, contrary to the homogeneous case. 
   
 Closely tied to Catalan numbers (see \cite[Formula (13)]{Ma2024} ) are  the \unedefinition{Narayana numbers} $N_{nk}$. They occur as coefficients of our next $(q,t)$-analog of Catalan number:
\begin{align}
\mathcal{N}_{n}(q,t)&:= \sum_{k=0}^{n} N_{nk}\,q^k t^{n-1-k} \nonumber\\
   &=\sum_{k=0}^{n} \frac{1}{{k+1}} \binom{n}{k}\binom{n-1}{k} q^k t^{n-1-k} \nonumber\\
   &=\sum_{0\leq 2j\leq n-1}  \Cat_{j+1}(1,1) \binom{n-1}{2j} (qt)^j (q+t)^{n-1-2j}.\label{qt_gamma_narayana}
\end{align}
We also have the bi-index notion of $(q,t)$-\unedefinition{Narayana}:
  \begin{align}\label{def_q_narayana_formula}
      N_{nk}(q,t) := \frac{1}{\qtint{k+1}} \qtbinom{n}{k}\qtbinom{n-1}{k}.
  \end{align}
These exhibit the symmetry $N_{(n,k)}(q,t) = N_{(n,n-k-1)}(q,t)$, so that we need only give values for  $k< n/2$. For instance, we have
\begin{table}[ht]
\renewcommand{\arraystretch}{1}
\begin{small}
\begin{tabular}{|c|l|l|l|l|l|l|}
\hline
$ N_{nk}(q,t)$ & $0$ & $1$ & $2$  \\[4pt] 
\hline
\hline
$1$ & $1$ & &\\
$2$ & $1$ & &\\
$3$ & $1$ & $q^{2} + q t + t^{2}$ & \\
$4$ & $1$ & $q^{4} + q^{3} t + 2 q^{2} t^{2} + q t^{3} + t^{4}$ & \\[-10pt]
$5$ & $1$ 
       & $\begin{aligned}\\[3pt]
       		q^{6} &+ q^{5} t + 2 q^{4} t^{2} + 2 q^{3} t^{3}\\
       		         & \qquad + 2 q^{2} t^{4} + q t^{5} + t^{6}
            \end{aligned}$ 
       & $\begin{aligned}\\[3pt]
       		q^{8} &+ q^{7} t + 3 q^{6} t^{2} + 3 q^{5} t^{3} + 4 q^{4} t^{4}\\
       		         &\qquad + 3 q^{3} t^{5} + 3 q^{2} t^{6} + q t^{7} + t^{8}
	    \end{aligned}$ \\[-20pt]
$6$ & $1$
       & $\begin{aligned}\\[3pt]
                 q^{8} &+ q^{7} t + 2 q^{6} t^{2} + 2 q^{5} t^{3} + 3 q^{4} t^{4}\\
       		         &\qquad  + 2 q^{3} t^{5} + 2 q^{2} t^{6} + q t^{7} + t^{8}
              \end{aligned}$
       & $\begin{aligned}\\[12pt]
        q^{12} &+ q^{11} t + 3 q^{10} t^{2} + 4 q^{9} t^{3} + 6 q^{8} t^{4}\\
       		         &\qquad   + 6 q^{7} t^{5} + 8 q^{6} t^{6} + 6 q^{5} t^{7} + 6 q^{4} t^{8}\\
	                 &\qquad + 4 q^{3} t^{9} + 3 q^{2} t^{10} + q t^{11} + t^{12}
	   	    \end{aligned}$ \\

\hline
\end{tabular}
\end{small}
  \caption{$(q,t)$-Narayana $N_{nk}$.}\label{qt_narayana_table}
\end{table}
We will come back to these examples to discuss their properties in upcoming sections.

\subsection{Factorial and Eulerian \texorpdfstring{$(q,t)$}{qt}-analogues}\label{sec_qt_eulerian}
As illustrated by some of the previous formulas, natural combinatorial $(q,t)$-analogues may be obtained via the $(q,t)$-enumeration of objects. For permutations,  classical instances of such statistics include (see \cite{MR2465404,Remmel-Wilson-2015} for more possibilities and variants):
\begin{itemize}
   \item \unedefinition{Exedance} number: $\exc(\sigma):=\# \{i \,|\, \sigma_i>i\}$, 
   \item \unedefinition{Descent} number: $ \des(\sigma):=\# \{i \,|\, \sigma_i>\sigma_{i+1}\}$, 
   \item \unedefinition{Ascent} number: $ \asc(\sigma):=\# \{i \,|\, \sigma_i<\sigma_{i+1}\}$,
    \item \unedefinition{Inversion} number: $ \inv(\sigma):=\# \{(i,j)\,|\, i<j \ \text{and}\ \sigma_i>\sigma_j\}$, and
   \item \unedefinition{Major index}: $\maj(\sigma):=\textstyle \sum_{\sigma_i>\sigma_{i-1}} i$.
\end{itemize}
These statistics are sometimes considered in conjunction with special families of permutations, and related structures. Among these, we have
\begin{itemize}
   \item The set of \unedefinition{derangements} $\Der_n:=\{\sigma \in \Sym_n\,|\, \sigma(i)\neq i\, \text{for all}\ i\}$,
   \item The set of \unedefinition{no double descent} permutations $\DDesc_n$, {\sl i.e.} no $i$ such that $\sigma(i)> \sigma(i+1)> \sigma(i+2)$). With $\DDesc'_n\subseteq \DDesc_n$ the subset of those that have no descent in last possible position (such that $\sigma(n-1)<\sigma(n)$); and $\DDesc''_n\subseteq \DDesc'_n$ for the subset of those for which we further have $\sigma(1)<\sigma(2)$.
   \item The set $\mathcal{R}_n$ of \unedefinition{compositions of $n$ having part size at most $2$}, which encodes descent ``runs'' for permutations lying in $\DDesc_n$. Similarly, the subset $\mathcal{R}'_n\subseteq \mathcal{R}_n$ of compositions that have last part equal to $1$; and $\mathcal{R}''_n\subseteq \mathcal{R}'_n$ for compositions having both first and last part equal to $1$. 
\end{itemize}

Both major-index and inversions give combinatorial interpretations for the ``classical'' $(q,t)$-factorial of \autorefdef{qt_factorial}:
 \begin{align*}
      \qtfact{n} & = \sum_{\sigma\in \Sym_n} q^{\inv(\sigma)}t^{\binom{n}{2}-\inv(\sigma)} \\
                     & =\sum_{\sigma\in \Sym_n} q^{\maj(\sigma)}t^{\binom{n}{2}-\maj(\sigma)}.
 \end{align*}
For a different tack on factorial $(q,t)$-analogues, we may also consider the \unedefinition{eulerian polynomials}, $A_n(q,t)$, with permutations enumerated by the descent statics (or excedances):
 \begin{align}
     A_{n}(q,t) :=\sum_{\sigma\in \Sym_{n}} q^{\des(\sigma)}t^{n-1-\des(\sigma)} \label{def_eulerian_pol}.
  \end{align}
Small values are as follows:
\begin{align*}
&A_1(q,t) = 1,\\
&A_2(q,t) = q + t,\\
&A_3(q,t) = q^{2} + 4 q t + t^{2},\\
&A_4(q,t) = q^{3} + 11 q^{2} t + 11 q t^{2} + t^{3},\\
&A_5(q,t) = q^{4} + 26 q^{3} t + 66 q^{2} t^{2} + 26 q t^{3} + t^{4},\\
&A_6(q,t) = q^{5} + 57 q^{4} t + 302 q^{3} t^{2} + 302 q^{2} t^{3} + 57 q t^{4} + t^{5}.
\end{align*} 
These also afford the explicit expression
 \begin{align}\label{formule_eulerian_pol} 
     A_n(q,t) = \sum_{k=0}^{n} \left(\sum_{j=0}^k (-1)^j \binom{n+1}{j} (k+1-j)^{n+1}\right) q^k t^{n-1-k}.
 \end{align}
 as well as formula\footnote{\cite[Thm 3.3]{MR4378084} gives a similar expression in terms of excedances.} (see \cite[Thm 5.6]{Foata})
\begin{align}\label{qt_Foata_Schutzenberger_formula}
   A_n(q,t) := \sum_{\sigma\in\DDesc'_n} (q+t)^{n-2\text{des}(\sigma)} (qt)^{\text{des}(\sigma)}, 
\end{align}
with $\sigma$ running over the set $\DDesc'_n$ of permutations $\sigma$ of $n$ that have no double descents (no $i$ such that $\sigma(i)> \sigma(i+1)> \sigma(i+2)$), such $\sigma(n-1)<\sigma(n)$.

The related \unedefinition{binomial eulerian polynomials}, $\A_n(q,t)$ are obtained from $A_n(q,1)$ by application of the \unedefinition{eulerian transformation}\footnote{See \cite{Athanasiadis2025}, \cite[Section 3.2]{MR4259159}, as well as \cite{MR4378084,Ji2024,Liu2025} for more on the {Eulerian transformation}. 
} and homogenization (see \cite[(1.1)]{Han2021}, \cite{postnikov2007}  and \cite{ShareshianWachs}), namely:
\begin{align}\label{qt_binomial_eulerian}
     {\A}_n(q,t):=\mathrm{Hom}_{qt}\left(\rouge{1+ q}\sum_{k=0}^n \binom{n}{k} A_k(q,1)\right).
\end{align}
We thus obtain
\begin{align*}
& \A_0 = 1,\\
& \A_1 = q + t,\\
& \A_2 = q^{2} + 3 q t + t^{2},\\
& \A_3 = q^{3} + 7 q^{2} t + 7 q t^{2} + t^{3},\\
& \A_4 = q^{4} + 15 q^{3} t + 33 q^{2} t^{2} + 15 q t^{3} + t^{4},\\
& \A_5 = q^{5} + 31 q^{4} t + 131 q^{3} t^{2} + 131 q^{2} t^{3} + 31 q t^{4} + t^{5},\\
& \A_6 = q^{6} + 63 q^{5} t + 473 q^{4} t^{2} + 883 q^{3} t^{3} + 473 q^{2} t^{4} + 63 q t^{5} + t^{6}.
\end{align*}
For which we have the formula\footnote{\cite[Thm 3.3]{MR4378084} gives a similar expression in terms of excedances.} 
\begin{align}\label{qt_binomial_eulerian_formula}
   \A_n(q,t):= 	\sum_{\rho\in \mathcal{R}_n} (q+t)^{|\rho|_{_1}} (qt)^{|\rho|_{_2}},
\end{align}
Among further interesting $(q,t)$-analogs associated to subsets of permutations, we have:
\begin{align}\label{qt_derangements}
	\D_n(q,t) 
				= \sum_{\sigma\in \rouge{\mathrm{Der}_n}} q^{\exc(\sigma)} t^{n-\exc(\sigma)}.
\end{align}
For which, when $n\geq 1$, we have $(q+t)^2\cdot\D_n(q,t) = (qt)\cdot A''_n(q,t)$, with 
\begin{align}\label{e_ribbons_pp}
	A''_n(q,t)=\sum_{\rho\in \mathcal{R}''_n} (q+t)^{|\rho|_{_1}} (qt)^{|\rho|_{_2}},
\end{align}
Here $|\rho|_{_i}$ denotes the number of size $i$ parts of a composition $\rho$ running over the set $\mathcal{R}''_n$ defined above.
Further examples are discussed in \autoref{sec_gamma_examples}.

%
%

 \subsection{Graded $(q,t)$-analogs}\label{graded_qt_analogs} 
 Most of the above examples are homogeneous and symmetric in $(q,t)$. We also have instances of non-homogenous $(q,t)$-symmetric enumerators such as the two of the Catalan numbers: the ones obtained via \autorefname{Recurrence}{def_qt_catalan_recurrence}; and those of \autorefformula{def_qt_catalan_nabla}, associated to the rich combinatorics of Macdonald polynomial theory. Still, these are symmetric in $(q,t)$. We will discuss below nice features of their \unedefinition{graded components}. In those graded instances, our study becomes more subtle. Indeed, there is no way to go back and forth between such a $(q,t)$-polynomial and its $t=1$ specialization, as was clearly possible in the homogeneous context. In other terms, the explicit grading of a $(q,t)$-analog is generally lost when passing to its $q$-analog. 
 
An instance occurs in  the enumeration of permutations with respect to the \unedefinition{major-comajor} weight: 
\begin{align}\label{major_comajor}
   \mathcal{M}_n(q,t):= \sum_{\sigma\in \Sym_n} q^{\maj(\sigma)}t^{\binom{n}{2}-\maj(\sigma^{-1})}
\end{align}
here illustrated with $n=4$:
  \begin{align*}
     \mathcal{M}_4(q,t)&=(q^{5} t^{3} + q^{4} t^{4} + q^{3} t^{5}) + (q^{5} t^{2} + q^{4} t^{3} + q^{3} t^{4} + q^{2} t^{5}) \\
     &\qquad + (q^{6} + q^{5} t + 2 q^{4} t^{2} + 2 q^{3} t^{3} + 2 q^{2} t^{4} + q t^{5} + t^{6}) \\
     &\qquad+ (q^{4} t + q^{3} t^{2} + q^{2} t^{3} + q t^{4})+ (q^{3} t + q^{2} t^{2} + q t^{3}),
 \end{align*}
 where we have grouped terms by ``total'' degree. Observe that, for $t=1$, we get
   \begin{align*}
     \mathcal{M}_4(q,1)&=(q^{5}  + q^{4}  + q^{3} ) + (q^{5}  + q^{4}  + q^{3} + q^{2}  ) \\
     &\qquad + (q^{6} + q^{5}  + 2 q^{4}  + 2 q^{3} + 2 q^{2}   + q   +1) \\
     &\qquad+ (q^{4}  + q^{3}   + q^{2}  + q  )+ (q^{3}  + q^{2}  + q)\\
     &= q^{6} + 3 q^5 + 5 q^4 + 6 q^3 + 5 q^3+ 3 q^2 +1,
 \end{align*}
 so that we evidently lose sight of the provenance of the various contributions to a given coefficient. 
Other typical well-known examples correspond to the  
Stirling polynomials of both kinds (see \cite{CaiReaddy}):
\begin{itemize}
\item The graded $(q,t)$-version of the sign-less Stirling numbers of the first kind (see \cite{LerouxMedicis}) may be obtained via the generating polynomial
\begin{align}\label{starling_first_kind}
	 \sum_{j=0}^{n-1} c_{nj}(q,t) \, z^j=
	  \prod_{k=1}^n (1+ \qtint{k}\cdot  z). 
\end{align}
Equivalently, we have the recurrence
   \begin{align}\label{rec_stirling_first_kind}
    	 c_{nk}(q,t):=c_{(n-1,k-1)}(q,t)+\qtint{n-1}\,c_{(n-1,k)}(q,t),
\end{align} 
with initials conditions $c_{n0}=c_{0k}=0$ and $c_{00}=1$.
For example, we get the graded symmetric polynomials of \autoref{stirling1_table}.
\begin{table}[ht]
\begin{small}
\begin{tabular}{|c||l|l|l|l|l|}
\hline
$ c_{nk}(q,t)$ & $0$ & $1$ & $2$ & $3$& $4$\\[4pt] 
\hline
\hline
$0$ & $1$ &  &  &   &   \\
$1$ & $0$ & $1$ &   &   &  \\
$2$ & $0$ & $1$ & $1$ &   &   \\
$3$ & $0$ & $q + t$ & $(q + t) + 1$ & $1$ &   \\[-14pt] 
$4$ & $0$ & $q^{3} + 2 q^{2} t + 2 q t^{2} + t^{3}$ 
	& $\begin{aligned}\\[7pt] (q^{3} &+ 2 q^{2} t + 2 q t^{2} + t^{3})\\
					      &+ (q^{2} + q t + t^{2}) + (q + t)\\[-4pt] \end{aligned}$ 
	& $\begin{aligned}\\[-4pt](q^{2} &+ q t + t^{2})\\
					      & + (q + t) + 1 \end{aligned}$ & $1$\\
\hline
\end{tabular}
\end{small}
  \caption{Sign-less $(q,t)$-Stirling of the first kind.}\label{stirling1_table}
\end{table}

\item  In turn, the $(q,t)$-version of the Stirling polynomials of the second kind (see \cite{Wachs1991}) arises via the recurrence
\begin{align}\label{qt_Stirling_second_kind}
  S_{nk}(q,t):=S_{(n-1,k-1)}(q,t)+\qtint{k}\, S_{(n-1,k)}(q,t),
 \end{align}
 as displayed in \autoref{stirling2_table}.
 \begin{table}[ht]
\begin{small}
\begin{tabular}{|c|l|l|l|l|l|l|}
\hline
$ S_{nk}(q,t)$ & $0$ & $1$ & $2$ & $3$& $4$ \\[4pt] 
\hline
\hline
$0$ &$1$ &   &   & &    \\
$1$ &$0$ & $1$ &   &  &     \\
$2$ &$0$ & $1$ & $1$ &   &    \\
$3$ &$0$ & $1$ & $q + t + 1$ & $1$ &     \\
$4$ &$0$ & $1$ & $(q^{2} + 2 q t + t^{2}) + (q + t) + 1$ & $(q^{2} + q t + t^{2}) + (q + t) + 1$ & $1$   \\
\hline
\end{tabular}
\end{small}
  \caption{$(q,t)$-Stirling of the second kind.}\label{stirling2_table}
\end{table}
\end{itemize}
We thus decompose positive integer coefficient polynomials, in $\P=\N[q,t]$, into their various homogeneous components lying in
$\P_d := \N\{ q^a t^b\,|\, a+b=d\}$. This clearly gives a grading of the semi-ring 
  $\P= \bigoplus_{d\geq 0} \P_d$.
As discussed in the next section, we are interested in symmetric polynomials component of this semi-ring.

 \section{Symmetric polynomials in two variables}\label{direct_qt_analog}
 Many notions appearing in the study of $q$-analogues become ``natural'' when the polynomials involved are homogenized so that they may be considered as symmetric functions in two variables $q$ and $t$. Just to illustrate, being a \unedefinition{palindromic} polynomial is just saying that the homogenized version is symmetric, and the two bases  considered in \cite[Section 2.]{brittenham2016} simply correspond to the (two variables) Schur and elementary symmetric functions bases. We will also extend such questions to graded settings.
 
 For the \unedefinition{unimodality} property of a $(q,t)$-symmetric polynomial, we easily verify the following:
    \begin{tcolorbox}[colback=azure!5!white]
 \begin{lemma}
   A  symmetric $(q,t)$-polynomial in $\N[q,t]$ is unimodal by degree, if and only if its Schur expansion has positive\footnote{For those who wonder, this means that coefficints lie in $\N=\{0,1,2,\ldots\}$, weirdly called non-negative by some.} coefficients.
 \end{lemma}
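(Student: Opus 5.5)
Two-variable Schur functions are $s_{(a,b)}(q,t)=(qt)^b\,\qtint{a-b+1}$, i.e.\ the monomial sum $\sum_{i=b}^{a} q^i t^{a+b-i}$: a homogeneous "interval" of degree $a+b$, symmetric about the middle. Only partitions with at most two parts survive in two variables, so these form a $\mathbb{Z}$-basis of symmetric polynomials in $q,t$. The plan is to reduce to one homogeneous component and then express the Schur coefficients as successive differences of the coefficient sequence.

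First, the reduction. Being symmetric and having positive Schur coefficients are both conditions that can be checked one homogeneous component $\P_d$ at a time, since each $s_{(a,b)}$ is homogeneous. I read "unimodal by degree" the same way: each homogeneous component has a unimodal coefficient sequence. So it suffices to treat a single symmetric $f=\sum_{i=0}^d c_i q^i t^{d-i}$ with $c_i=c_{d-i}$.

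Next, expand $f$ in the Schur basis. Write $f=\sum_{b\le d/2} m_b\, s_{(d-b,b)}$. The term $s_{(d-b,b)}$ covers the monomials with $q$-exponent in $[b,d-b]$. Hence, for $i\le d/2$,
\[ c_i=\sum_{b\le i} m_b, \qquad\text{equivalently}\qquad m_b=c_b-c_{b-1}\ (\text{with } c_{-1}=0). \]
This triangular inversion is the key step, and it follows immediately from the interval description of the Schur functions.

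Finally, compare with unimodality. For a symmetric sequence, unimodality is equivalent to $c_0\le c_1\le\cdots\le c_{\lfloor d/2\rfloor}$. Indeed, the mode can be taken at the center, and symmetry then gives the decreasing half. This chain of inequalities is exactly $m_b\ge 0$ for all $b$. Integrality of the $m_b$ is automatic since $c_i\in\N$. Summing over degrees $d$ gives the lemma.

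The only point needing care is the remark that a symmetric unimodal sequence weakly increases up to its center. The argument: if $c_j>c_{j+1}$ for some $j<\lfloor d/2\rfloor$, then by symmetry the sequence also rises again later on, at the mirrored position, contradicting unimodality. Everything else is bookkeeping.
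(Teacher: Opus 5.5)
Your proof is correct, and it is essentially the verification the paper has in mind. The paper gives no written proof ("we easily verify"); it records $s_{ab}=q^at^b+\cdots+q^bt^a$ as a centered interval, and it illustrates the lemma by writing $\qtfact{5}$ as a sum of nested intervals whose multiplicities are exactly your successive differences $c_b-c_{b-1}$. Your explicit argument that a symmetric unimodal sequence weakly increases up to its center fills in the one step the paper leaves unstated.
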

 \end{tcolorbox}
As we will see below, many interesting $(q,t)$-analogues are unimodal, hence Schur-positive.  As an illustration, the unimodality of
 $$\qtfact{5} = q^{10} + 4 q^{9} t + 9 q^{8} t^{2} + 15 q^{7} t^{3} + 20 q^{6} t^{4} + 22 q^{5} t^{5} + 20 q^{4} t^{6} + 15 q^{3} t^{7} + 9 q^{2} t^{8} + 4 q t^{9} + t^{10},$$
 corresponds to the sum
 \begin{small}
 \begin{gather*} 
2\cdot(q^{5} t^{5})\\
+\,5\cdot(q^{6} t^{4} + q^{5} t^{5} + q^{4} t^{6})\\
+\,6\cdot(q^{7} t^{3} + q^{6} t^{4} + q^{5} t^{5} + q^{4} t^{6} + q^{3} t^{7})\\
+\,5\cdot(q^{8} t^{2} + q^{7} t^{3} + q^{6} t^{4} + q^{5} t^{5} + q^{4} t^{6} + q^{3} t^{7} + q^{2} t^{8})\\
+\,3 \cdot(q^{9} t + q^{8} t^{2} + q^{7} t^{3} + q^{6} t^{4} + q^{5} t^{5} + q^{4} t^{6} + q^{3} t^{7} + q^{2} t^{8} + q t^{9})\\
+(q^{10} + q^{9} t + q^{8} t^{2} + q^{7} t^{3} + q^{6} t^{4} + q^{5} t^{5} + q^{4} t^{6} + q^{3} t^{7} + q^{2} t^{8} + q t^{9} + t^{10}).
\end{gather*}
\end{small}\noindent
Among other notions that become clearer in this setting are the notions of ``$\gamma$-positivity'' and ``$\gamma$-anti-positivity (see \autoref{sec_gamma_positivity}), as well as related properties.  Here are some of the expansions of the diagonal harmonics $(q,t)$-Catalan $\CatNab_n$ (see \autorefname{definition}{def_qt_catalan_nabla}) in the Schur-basis, graded by degree:
 \begin{align*}
\CatNab_2  &=s_{1},\\
\CatNab_3  &=s_3+s_{11} ,\\
\CatNab_4  &=s_6+s_{41} +s_{31}, \\
\CatNab_5  &=s_{(10)}+ s_{81} +(s_{71}+ s_{62}) + (s_{61} + s_{43}) +s_{42},\\ 
\CatNab_6  &=s_{(15)}+ s_{(13,1)} + (s_{(12,1)}+ s_{(11,2)}) +(s_{(11,1)} + s_{(10,2)}+ s_{93} ) \\
           &\qquad + (s_{(10,1)}+  s_{92} + s_{83}+ s_{74})+(s_{82}   +s_{73}+ s_{64}) + (s_{72}  + s_{63}) + s_{44}.
\end{align*}
In these, each homogeneous degree-component is  Schur positive, and thus $(q,t)$-unimodal. We will say that such a $(q,t)$-polynomial is \unedefinition{unimodal by degree}.

\subsection{Classical symmetric function background}
We recall here well-known properties (see the classical reference \cite{Macdonald-Book-1995}) and identities concerning the graded subring  $\Lambda_2=\bigoplus_{d\geq 0}\Lambda_2^{(d)}$ of the polynomial ring $\QQ[q,t]$, whose elements are symmetric. Its degree-$d$ homogeneous component $\Lambda_2^{(d)}$ affords as a basis the set of \unedefinition{monomial symmetric} polynomials indexed by partitions with at most two parts: $\{m_{ab}\}_{ab}$ where $a\geq b\geq 0$ and $a+b=d$. 
Recall that these are simply
   $$m_{ab} = q^at^b+q^bt^a.$$
Special instances include \unedefinition{power-sums} $p_n = m_{n} = q^n+t^n$, and elementary symmetric functions $e_1= m_1 = q+t$ and $e_2= m_{11} = qt$. 
We also have the Schur basis $\{s_{ab}\}_{ab}$, with $a\geq b\geq0$ and $a+b=d$ (all other Schur polynomials vanish):
   $$s_{ab} = q^at^b + q^{a-1} t^{b+1} + \ldots + q^{b+1} t^{a-1} + q^b t^a $$
The corresponding transition matrix is the restriction of the Kostka matrix to partitions having at most two parts:
 \begin{align}\label{two_kostka_matrix}
     K_{ab,cd} =\begin{cases}
     1 & \text{if}\ a\geq c, \\
      0 & \text{otherwise},
\end{cases}
\end{align}
or equivalently
	$$s_{ab} = \sum_{0\leq 2i\leq a-b}  m_{(a-i,b+i)}.$$ 
Of particular interest for the following is the basis\footnote{It may be worth recalling that the number of partitions in at most two parts is the same as the number of partitions with parts at most of size $2$.}  $\{e_1^j e_2^k \}_{j+2k=d}$. Observe that all other elementary $e_\mu$ vanish since $e_k(q,t)=0$ for all $k\geq 3$. Furthermore, the set $\{h_1^j h_2^k\}_{j+2k=d}$ forms yet another basis, as well as the set of power-sums $\{p_1^j p_2^k\}_{j+2k=d}$. Observe that all of the above basis polynomials actually lie in graded components of the semi-ring $\P$ (have positive coefficients in their $(q,t)$-expansion, and are homogenous).

 \subsection{Elementary basis expansion} Another useful expansion for symmetric $(q,t)$-polynomials in $\N[q,t]$ is their expansion as $\mathbb{Z}$-linear combinations of terms of the form $(q+t)^i (qt)^j$. In other terms, this is their expansion in the elementary symmetric function basis $\{e_\mu\}_{\mu}$, where $\mu$ runs over the set of partitions having parts smaller or equal to $2$ (as they involve only two variables). For sure, the special case when all coefficients are positive is of particular interest. We will come back later to the situation when  we have this positivity up to replacing $e_2$ by $(-e_2)$. 
 
 For the $(q,t)$-integers, we have the following:
    \begin{tcolorbox}[colback=azure!5!white]
\begin{lemma}\label{lemma_e_int}
	The $(q,t)$-integers $[n+1]$ expand as
    	\begin{align}\label{e_int_anti_positive}
              [n+1] = \sum_{2j\leq n} \binom{n-j}{j}e_1^{n-2j}(-e_2)^j.
   	\end{align}   
Equivalently, one has the recurrence $ [n+1] =e_1\,[n]-e_2\,[n-1]$, with initial conditions $[0]=0$ and $[1]=1$, or in matrix terms 
\begin{align}\label{matrix_formula_int}
          \begin{pmatrix} \eint{n+1}& (-e_2)\cdot\eint{n} \\
     			   \eint{n} & (-e_2)\cdot\eint{n-1}\end{pmatrix}
	=
	\begin{pmatrix} e_1& -e_2 \\ 
     			    1 & 0\end{pmatrix}^n.
\end{align}
\end{lemma}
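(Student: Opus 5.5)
The plan is to prove the recurrence first, then derive both the matrix identity and the closed formula from it.

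\emph{The recurrence.} Since $e_1=q+t$ and $e_2=qt$, the variables $q$ and $t$ are the two roots of $x^2-e_1x+e_2$. Hence $q^{m+2}=e_1q^{m+1}-e_2q^m$, and likewise with $t$ in place of $q$. Subtracting the two identities and dividing by $q-t$ gives
\[
[m+2]=e_1\,[m+1]-e_2\,[m].
\]
This holds for all $m\geq 0$, including $m=0$, since $[2]=q+t=e_1$ and $[0]=0$. The initial conditions $[0]=0$ and $[1]=1$ are immediate from the definition. Alternatively, one can verify the recurrence directly: $(q+t)(q^{n}-t^{n})-qt(q^{n-1}-t^{n-1})=q^{n+1}-t^{n+1}$.

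\emph{The matrix identity \eqref{matrix_formula_int}.} I argue by induction on $n$. For $n=1$, the left side is $\begin{pmatrix} e_1& -e_2\\ 1&0\end{pmatrix}$, because $[2]=e_1$, $[1]=1$ and $[0]=0$. For the inductive step, multiply the $n$-th identity on the right by $\begin{pmatrix} e_1& -e_2\\ 1&0\end{pmatrix}$. The first row becomes
\[
\bigl(e_1[n+1]-e_2[n],\;-e_2[n+1]\bigr)=\bigl([n+2],\;(-e_2)[n+1]\bigr)
\]
by the recurrence, and the second row works the same way. The case $n=0$ is consistent with the identity matrix, reading $[-1]$ as $-1/e_2$; I will restrict to $n\geq1$ or note this convention.

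\emph{The closed formula \eqref{e_int_anti_positive}.} Let $F_n$ denote its right-hand side. I show that $F_n$ satisfies the same recurrence and initial values as $[n+1]$. Clearly $F_0=1$, $F_1=e_1$, and $F_{-1}=0$ as an empty sum. Compute $e_1F_{n-1}+(-e_2)F_{n-2}$ and collect the coefficient of $e_1^{n-2j}(-e_2)^j$. It equals
\[
\binom{n-1-j}{j}+\binom{n-1-j}{j-1}=\binom{n-j}{j}
\]
by Pascal's rule. One must check the boundary ranges of $j$, where the convention $\binom{a}{b}=0$ for $b<0$ or $b>a$ is needed. This bookkeeping at the boundary is the only point requiring care.

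A combinatorial alternative is available: count tilings of a length-$n$ strip by monominoes (weight $e_1$) and dominoes (weight $-e_2$). A tiling with $j$ dominoes uses $n-2j$ monominoes, giving $\binom{n-j}{j}$ tilings. This tiling count is also exactly the $(1,1)$ entry of the matrix power, which links the three statements. This agrees with \eqref{formule_qt_integer}, already recorded in the paper, so the lemma can alternatively be cited from there.
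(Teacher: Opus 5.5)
Your proof is correct and complete, but it reaches the recurrence by a different route from the paper.

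\textbf{The paper's route.} The paper stays inside symmetric function theory. It observes that $[n+1]=h_n(q,t)$ and applies the dual Jacobi--Trudi identity $h_n=\det(e_{1-i+j})_{1\le i,j\le n}$. Since $e_k(q,t)=0$ for $k\geq 3$, this matrix is tridiagonal, with $e_1$ on the diagonal, $e_2$ above it and $1$ below it. Expanding along the last row gives $[n+1]=e_1[n]-e_2[n-1]$. The paper leaves the remaining steps implicit, simply pointing back to the earlier formula for $[n+1]$ in terms of $q+t$ and $qt$.

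\textbf{Your route.} You get the recurrence directly from the fact that $q$ and $t$ are the roots of $x^2-e_1x+e_2$. You then write out two things the paper omits: the induction for the matrix identity (including the $n=0$ convention), and the Pascal-rule bookkeeping that produces the coefficients $\binom{n-j}{j}$.

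\textbf{What each buys.} The paper's route fits its program: the lemma becomes an instance of a classical identity in $\Lambda_2$. The same viewpoint immediately gives $s_{ab}=e_2^b\,h_{a-b}=[a-b+1]\,e_2^b$. Your route is more elementary and self-contained.

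Your tiling remark connects the two. The permutation expansion of the paper's tridiagonal determinant runs over products of disjoint adjacent transpositions. These correspond exactly to monomino--domino tilings of a strip of length $n$. Each domino contributes $e_2\cdot 1$ times the sign $-1$ of a transposition, so the expansion is precisely your weighted count. It is also the $(1,1)$ entry of the matrix power.
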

\end{tcolorbox}
As $h_{n}(q,t)=\qtint{n+1}$ this follows from the dual Jacobi-Trudi identity\footnote{Not forgetting that $0=e_3=e_4=\ldots$, as we have only two variables.} modulo (see  \autorefformula{formule_qt_integer}):
\begin{align*}
      h_n=[n+1]= \det\begin{pmatrix}
e_1 & e_2 & 0 & \ldots & 0 & 0\\
1 & e_1 & e_2 & \ldots & 0 & 0\\
0 & 1 & e_1 & \ldots & 0 & 0\\
\vdots & \vdots & \vdots & \ddots & \vdots & \vdots\\
 0  &  0  & 0 &  \ldots  &  e_1 & e_2 \\
0  &  0  & 0 &  \ldots  &  1 & e_1
\end{pmatrix}
\end{align*}
with the matrix being of size $n\times n$.  As a corollary, when $a>b$, we may also express the Schur functions as
    	\begin{align}\label{s_to_e} 
              s_{ab} =\eint{a-b+1} \cdot e_2^b  = \sum_{2j\leq a-b} (-1)^j \binom{a-b-j}{j}e_1^{a-b-2j}e_2^{b+j}.
   	\end{align}   
or in matrix terms:
    	\begin{align}\label{s_to_e_mat} 
             \begin{pmatrix} s_{(a+1,b)}& -e_2\cdot s_{ab}\\ 
     			    s_{ab} & -e_2\cdot  s_{(a-1,b)}\end{pmatrix}
	      =
	    e_2^b\cdot \begin{pmatrix} e_1& -e_2 \\ 
     			    1 & 0\end{pmatrix}^{a-b+1}. 
   	\end{align}   
Furthermore, as $p_n= [n+1]-e_2\,[n-1]$, we get the $e$-expansion of the power-sum (see \autorefformula{qt_formule_power_sum}):
\begin{align}
 	 p_n = \sum_{0\leq 2j \leq n} \frac{n}{n-j} \binom{n-j}{j} e_1^{n-2j}(-e_2)^{j}.\label{p_to_e}
\end{align}
Equivalently, we have the recurrence
\begin{align}\label{rec_e_power}
    p_n =e_1\,p_{n-1} -e_2\,p_{n-2},
\end{align}
with initial conditions $p_1=e_1$ and $p_2=e_1^2-2\,e_2$.
Also coined in terms of a well-known generating series formula  (see \cite{Macdonald-Book-1995}), we have
 \begin{align}\label{GF_e_power}
    \sum_{n\geq 1} p_n x^n  = \frac{e_1\, x+2e_2\,x^2}{1-e_1\,x+e_2\,x^2}.
\end{align}
For the monomial basis, one has the $e$-expansion
\begin{align}\label{monomial_to_e}
 	 m_{ab} = \sum_{0\leq 2k \leq a-b} (-1)^k \frac{a-b}{a-b-k} \binom{a-b-k}{k} e_1^{a-b-2k}e_2^{k+b}.
\end{align}
The effect of plethysms $p_n\circ (-)$ on elements of $\Lambda_2$, are obtained via the calculation  rules:
\begin{align*}
      &p_n\circ e_1= [n+1]-e_2\,[n-1], &&p_n\circ e_2= e_2^n,\\
     &p_n\circ(a\,f+b\,g) =  a\,(p_n\circ f)+b\,(p_n\circ g)  &&p_n\circ(f\cdot g) =  (p_n\circ f)\cdot (p_n\circ g), 
\end{align*}
for any $f$ and $g$ in $\Lambda_2$, and scalars $a$ and $b$. More generally,  we may extend this to any $h\in \Lambda$, considering $h\circ(-)$ as an action of elements  on $\Lambda_2$, exploiting the further rules 
\begin{align*}
	&(h_1+h_2)\circ (-) =  (h_1\circ (-))+h_2(\circ (-)), &&\text{and}&& (h_1\cdot h_2)\circ (-) =  (h_1\circ (-))\cdot h_2(\circ (-)).
\end{align*} 
In particular, we get (see \cite[Prop. 4.6]{Ratliff2004}) 
\begin{align}\label{formule_produit_qt_int}
    (p_k\circ [n]) =\frac{ [nk]}{ [k]} .
\end{align}



\subsection{Back to \texorpdfstring{$(q,t)$}{qt}-analogues}
The recurrence for the $(q,t)$-{factorial} may be formally stated in $\Lambda_2$ as
 \begin{align}
   \efact{n+1}=s_n \cdot \efact{n},
\end{align}
with $\efact{0}= \efact{1}=1$. Notice that we typically omit writing variables when dealing with symmetric functions. We may be readily expanded the above recursively using $2$-variable version of the classical \unedefinition{Pieri formula} multiplication rule for Schur functions: 
\begin{align}\label{Pieri-Formula}
     s_n\cdot s_{ab} = \sum_{\newatop{k+j=n}{a+b\geq b+j}} s_{(a+k,b+j)}.
\end{align}
 Small values of the Schur expansion of the $(q,t)$-factorials are thus as follows:
\begin{align*}
 \efact{1}=&s_{0},\\
\efact{2} =&s_{1},\\
\efact{3}=&s_{3} + s_{21},\\
\efact{4}=&s_{6} + 2 s_{51} + 2 s_{42} + s_{33}, \\
\efact{5} =&s_{(10)} + 3 s_{91} + 5 s_{82} + 6 s_{73} + 5 s_{64} + 2 s_{55},\\
\efact{6} =&s_{(15)} + 4 s_{(14,1)} + 9 s_{(13,2)} + 15 s_{(12,3)} + 20 s_{(11,4)} + 22 s_{(10,5)} + 19 s_{96} + 11 s_{87}.
\end{align*}
 Observe that, in lexicographic, order, coefficients of the Schur expansion of $\efact{n} $ are themselves unimodal  (See \cite{MurtyUnimodal}). One may check that they are in fact log-concave. 
 
The $(q,t)$-\unedefinition{binomial coefficients}  correspond to the $(q,t)$-evaluation  of the plethysm $h_n\circ h_k$ :
\begin{align}
  \qtbinom{n+k}{k} &=\sum_{\alpha\subseteq (n^k)}q ^{|\alpha|}t ^{nk-|\alpha|}\nonumber\\
          &=(h_n\circ h_k)(q,t).\label{plethysm_qt_binomial}\\
\end{align}
With $i+j=b$, let us picture the terms $q^it^j$ of $h_b(q,t)=q^b+q^{b-1}t+\ldots + q\,t^{b-1}+t^b$ as a bicoloured row of cells:
\ytableausetup{mathmode, boxsize=.4cm}
\begin{align*}
&\ydiagram[*(yellow)] {6} *[*(green!70!black)]{11}\\[-9pt] 
 &\hskip-.1cm\underbrace{\hskip2.45cm}_{\textstyle i} \underbrace{\hskip2cm}_{\textstyle j}
\end{align*}
As above, the various monomials occurring in $(h_a\circ h_b)(q,t)$ are obtained in terms of tableaux-of-tableaux of the form (here with $a=4$ and $b=11$):
\ytableausetup{mathmode, boxsize=.3cm}
 $$  \begin{tikzpicture}[scale=.75]
  \draw[fill =azure!20] (-2.5,0.5) rectangle (2.5,1.5);  \node at (0,1) {\ydiagram[*(yellow)] {10} *[*(green!70!black)]{11}};
  \draw[fill =azure!20] (2.5,0.5) rectangle (7.5,1.5);  \node at (5.02,1)  {\ydiagram[*(yellow)] {6} *[*(green!70!black)]{11}};
  \draw[fill =azure!20] (7.5,0.5) rectangle (12.5,1.5); \node at (10.02,1) {\ydiagram[*(yellow)] {6} *[*(green!70!black)]{11}};
  \draw[fill =azure!20] (12.5,0.5) rectangle (17.5,1.5); \node at (15.02,1) {\ydiagram[*(yellow)] {2} *[*(green!70!black)]{11}};
\end{tikzpicture}$$
The associated weight being $q^k t^l$, if there are $k$ yellow cells in total, and $l$ green ones. This may be bijectively encoded by stacking the internal coloured row tableaux, to obtain \autorefname{Fig.}{fig_taxi}.

\begin{figure}[ht]
\includegraphics[scale=.7]{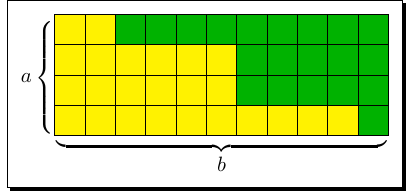}
\caption{Subpartion of a rectangle.}
\label{fig_taxi}
\end{figure}
Thus, we recuperate the description of \autoref{plethysm_qt_binomial} for the $(q,t)$-binomial coefficient, as a weighted enumerator of partitions contained in the $(a\times b)$-rectangle.
It follows from a representation theory argument  that this is a Schur positive polynomial. Indeed, the plethysm $(h_n\circ h_k)$ corresponds to $\GL(V)$-character of the symmetric power of symmetric power $S^{n}(S^{k}(V))$. Assuming that $\dim(V)=2$, this character expands as a positive integer linear combination of Schur functions indexed by partitions with at most two parts. For general $n$ and $k$, the coefficients of this Schur expansion in lex-order are not unimodal in general (see an example below). In view of the equivalent $(q,t)$-Pascal triangle recurrences (see \autoref{q_Pascal_triangle}): 
\begin{align*}
     \qtbinom{n+k}{k} &=q^k\qtbinom{n+k-1}{k}+t^{n} \qtbinom{n+k-1}{n}\\ 
		&=t^k\qtbinom{n+k-1}{k}+q^{n} \qtbinom{n+k-1}{n} 
\end{align*}
we get
\begin{align}\label{e_recurrence_binom}
\ebinom{n+k}{k}=\frac{p_k}{2} \, \ebinom{n+k-1}{k}+\frac{p_{n}}{2}\, \ebinom{n+k-1}{n}
\end{align}
As explained above, we also have
 \begin{align}\label{e_plethysm_binom}
\ebinom{n+k}{k}=h_n\circ \eint{k+1},
\end{align}
with plethysm considered as an action of $\Lambda$ on $\Lambda_2$ (see \autoref{sec_classical_prop_cyclo}, for more on this).
Instances of Schur expansions occurring in the $(q,t)$-Pascal triangle are given in \autoref{tab_schur_positivity_binomials}.
\begin{table}[ht]
\begin{small}
\begin{tabular}{|c||l|l|l|l|l|}
\hline
$\textstyle \ebinom{n+k}{k}$ & $0$ & $1$ & $2$ & $3$ & $4$  \\[4pt]
\hline
\hline
$0$ & $1$ & $1$ & $1$ & $1$ & $1$ \\
\hline
$1$ & $1$ & $s_{1}$ & $s_{2}$ & $s_{3}$ & $s_{4}$  \\
\hline
$2$ & $1$ & $s_{2}$ & $s_{4}+s_{22}$ & $ s_{6}+ s_{42}$ & $s_{8} + s_{62} + s_{44} $  \\
\hline
$3$ & $1$ & $s_{3}$ & $s_{6}+ s_{42}$ & $s_{9} + s_{72} + s_{63}$ & $ s_{(12)} + s_{(10,2)} + s_{93} + s_{84} + s_{66} $  \\[2pt]
\hline
$4$ & $1$ & $s_{4}$ & $s_{8} + s_{62} + s_{44} $ & $s_{(12)} + s_{(10,2)} + s_{93} + s_{84} + s_{66} $ & $\begin{aligned}  s_{(16)} &+ s_{(14,2)} + s_{(13,3)} \\ & + 2 s_{(12,4)}  + 2 s_{(10,6)} + s_{88}  \end{aligned}$ \\
\hline
\end{tabular}
\end{small}
\caption{Schur positivity of $(q,t)$-binomials}\label{tab_schur_positivity_binomials}
 \end{table}

 \subsection{The \texorpdfstring{$(ad\--bc)$}{adbc}-conjecture}
The $(q,t)$-specialization\footnote{A simple restriction to two variables.} of a conjecture stated in \cite{VESSENES2004579} gives the following (see  \cite{BergeronFoulkes,Zanello2018}). 
   \begin{tcolorbox}[colback=azure!5!white]
  \begin{conjecture}[$(ad\--bc)$-Conjecture]\label{conj_bergeron_vessenes}
    For all integers $a\leq b\leq c\leq d$ such that $ad=bc$, the polynomial
    \begin{align}\label{bergeron_vessenes}
         \ebinom{b+c}{b} -\ebinom{a+d}{a}
    \end{align} 
is Schur-positive (hence $(q,t)$-unimodal).
  \end{conjecture}
    \end{tcolorbox}
  \noindent
  Some cases have been shown to hold (see \cite{Zanello2018}). In many instances, but not all, the difference considered is $\gamma$-anti-positive (up to a global sign). The only exceptions for $n=ad=bc \leq 40$ correspond the following instances of $((a,d),(b,c))$
 \begin{align*}
&((2, 8),(4, 4)),&&((2, 12),(4, 6)),&&((2, 14),(4, 7)),&&((2, 15),(5, 6)),&&((2, 16),(4, 8)),\\
&((2, 18),(4, 9)),&&((2, 18),(6, 6)),&&((4, 9),(6, 6)),&&((2, 20),(5, 8)),&&((2, 20),(4, 10)).
 \end{align*}
Further experiments (with $n$ going up to $300$) suggest that we do have $\gamma$-anti-positivity when $a$ is odd.  It would be interesting to characterize all the cases for which we do have $\gamma$-anti-positivity. 
Observe that one may greatly reduce the number of instances to be verified for the above statements by exploiting the evident equality (as polynomials in $\N[q,t]$ or their Schur expansions):
  $$ \ebinom{b+c}{b} -\ebinom{a+d}{a}= \left(\ebinom{b+c}{b} -\ebinom{x+y}{x}\right)+ \left(\ebinom{x+y}{x} -\ebinom{a+d}{a}\right),$$
for values of $x$ and $y$ that satisfy the necessary conditions.

%

\section{\texorpdfstring{$\gamma$}{gamma}-Positivity and \texorpdfstring{$\gamma$}{gamma}-anti-positivity}\label{sec_gamma_positivity}

In classical settings (see  \cite{Petersen2015}), one considers ``$\gamma$-positivity'' for palindromic polynomials $f(q)$ in $\N[q]$. Recall that palindromicity means that, for some $n$, one has $f(q)=q^n\,f(1/q)$. When this holds, $f(q)$ affords a \unedefinition{$\gamma$-expansion}
  $$f(q)= \sum_{2i+j=n} \gamma_{ij}\,{q}^i (1+q)^j.$$
 If the coefficients $\gamma_{ij}$ are positive integers, one says that $f$ is \unedefinition{$\gamma$-positive}. For such $\gamma$-expansions, the following ``strange'' transformation is considered in the literature (see \cite{brittenham2016})
             $$f(q)= \sum_{2i+j=n} \gamma_{ij}\,\rouge{q}^i (1+q)^j \longmapsto  \sum_{2i+j=n} \gamma_{ij} \rouge{(-q)}^i (1+q)^j = \Anti{f}(q),$$
which ``selects'' some of the $q$'s to change their signs. 
This transformation appears much more natural in the $(q,t)$-symmetric polynomial setting, with properties derived easily.        
Indeed, in the $(q,t)$-homogenized setting, the property of being palindromic simply corresponds to a $(q,t)$-polynomial being symmetric; and the $\gamma$-expansion is the usual expression of the polynomial in the elementary basis $\{e_1^i e_2^j\}_{i+2j=n}$ (going back to Newton). Thus described, the $\gamma$-positivity of a polynomial is just $e$-positivity (and vice versa). Similarly, \unedefinition{$\gamma$-anti-positivity} corresponds to $e$-positivity up to the $\Anti{(-)}$ isomorphism:
\begin{align}
	\Anti{(-)}:\Anti{\Gamma} \longrightarrow \Gamma,\qquad \text{sending}\qquad \alpha(e_1,e_2)\mapsto \alpha(e_1,-e_2).
\end{align}
 In the study of polytopes,  $\gamma$-positive and $\gamma$-anti-positive\footnote{These also appear as under the guise of ``alternating gamma vectors'', see \cite{brittenham2016}.} polynomials are linked to face enumerations \cite{AthanasiadisGamma,Athanasiadis2025,Dey2020,Gal2005}. 

\subsection{The semimodules \texorpdfstring{$\Anti{\Gamma}$}{gam} and \texorpdfstring{$\Gamma$}{gamp}} 
A polynomial $f(q,t)\in \Z[e_1,e_2]$ is said to be \unedefinition{$\gamma$-positive} if it lies in the graded semimodule of polynomials: $\Anti{\Gamma}:=\N[e_1,e_2]$, with:
\begin{align}
   \Anti{\Gamma} = \bigoplus \Anti{\Gamma}_{d},\qquad \text{with}\qquad \Anti{\Gamma}_{d}:=\Z\{e_1^ie_2^j\, |\, i+2j=d\},
\end{align}
with $\deg(e_1)=1$ and $\deg(e_2)=2$, as usual. Although this is simply the classical notion of \unedefinition{$e$-positivity} in the context of the study of symmetric functions, we use the above terminology to better relate our discussion to existing extensive work in the context of $q$-analogues (see \cite{AthanasiadisGamma}). Clearly, $\gamma$-positivity implies Schur-positivity.

For reasons that will be discussed further below, it is also interesting to also consider polynomials $g(e_1,e_2) \in \mathbb{Z}[e_1,e_2]$, such that $g(e_1,-e_2)$ is $\gamma$-positive. We then say that they are \unedefinition{$\gamma$-anti-positive}. Thus we get the graded semimodule $\Gamma:=\N[e_1,-e_2]$ of  \unedefinition{$\gamma$-anti-positive polynomials}. 
We clearly have
   \begin{tcolorbox}[colback=azure!5!white]
\begin{lemma}\label{lem_gamma_anti}
The map $\Anti{(-)}$ is an involutive isomorphism of graded semi-modules.
\end{lemma}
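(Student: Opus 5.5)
The plan is to view $\Anti{(-)}$ as the restriction of a ring automorphism of $\Z[e_1,e_2]$, and to read off both claims (involutivity and bijectivity between the two semimodules) from how it acts on the monomial basis. Since $e_1=q+t$ and $e_2=qt$ are algebraically independent, $\Lambda_2=\QQ[e_1,e_2]$ is a polynomial ring in them. So the substitution $\phi\colon e_1\mapsto e_1$, $e_2\mapsto -e_2$ extends uniquely to a ring endomorphism $\phi$ of $\Z[e_1,e_2]$, sending $\alpha(e_1,e_2)$ to $\alpha(e_1,-e_2)$. This $\phi$ is exactly the map $\Anti{(-)}$ of the statement.

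The steps are as follows.

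\textbf{Involutivity.} We have $\phi\circ\phi$ fixing $e_1$ and sending $e_2\mapsto -(-e_2)=e_2$. By uniqueness of the extension, $\phi^2=\mathrm{id}$, so $\phi$ is an involutive ring automorphism.

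\textbf{Grading.} On the basis monomials, $\phi(e_1^ie_2^j)=(-1)^j e_1^ie_2^j$. Hence $\phi$ preserves each component spanned by $\{e_1^ie_2^j : i+2j=d\}$, so it is graded.

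\textbf{The two semimodules.} The semimodule $\Anti{\Gamma}=\N[e_1,e_2]$ is exactly the set of $\N$-linear combinations of the monomials $e_1^ie_2^j$. Likewise $\Gamma=\N[e_1,-e_2]$ is the set of $\N$-linear combinations of $e_1^i(-e_2)^j=\phi(e_1^ie_2^j)$. Since $\phi$ is additive and maps the $\N$-generators of one semimodule onto those of the other, $\phi(\Anti{\Gamma})\subseteq\Gamma$ and $\phi(\Gamma)\subseteq\Anti{\Gamma}$. Combined with $\phi^2=\mathrm{id}$, the two restrictions are mutually inverse. Therefore $\phi$ gives a bijection between $\Anti{\Gamma}$ and $\Gamma$ that respects addition and $\N$-scaling, degree by degree, and it is also multiplicative.

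The only point needing care is well-definedness, namely that the coefficients $\gamma_{ij}$ in the expansion are uniquely determined by the polynomial $f(q,t)$. This is the algebraic independence of $e_1,e_2$, i.e.\ the fundamental theorem of symmetric functions in two variables, which I take as the classical background recalled above. Everything else is formal.
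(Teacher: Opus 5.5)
Your proposal is correct and matches the paper's approach. The paper gives no proof beyond ``We clearly have'', and your argument is the routine check it has in mind: $\alpha(e_1,e_2)\mapsto\alpha(e_1,-e_2)$ is well defined because $e_1,e_2$ are algebraically independent, it is a degree-preserving ring involution of $\Z[e_1,e_2]$, and it carries the $\N$-generators $e_1^ie_2^j$ of $\Anti{\Gamma}$ onto the $\N$-generators $e_1^i(-e_2)^j$ of $\Gamma$, which gives mutually inverse restrictions.
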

\end{tcolorbox}

\subsection{Some $\gamma$-positive examples}\label{sec_gamma_examples}
We may recast \autorefformula{qt_gamma_narayana} as:
\begin{align}
\mathcal{N}_{n}=\sum_{0\leq 2j\leq n-1}  \Cat_{j+1}(1,1) \binom{n-1}{2j} e_2^j e_1^{n-1-2j},\label{e_gamma_narayana}
\end{align}
hence $\mathcal{N}_{n}$ is $\gamma$-positive.  The \unedefinition{exedance}-enumeration of derangements (see  \autoref{qt_derangements}), gives another instance of $\gamma$-positivity, with small examples as follows. 
 \begin{align*}
\D_2 &= s_{11} &&= e_{2},\\
\D_3 &= s_{21} &&= e_{2} e_{1},\\
\D_4 &= 6 s_{22} + s_{31} &&= e_{2} e_{1}^{2} + 5 e_{2}^{2},\\
\D_5 &= 20 s_{32} + s_{41} &&= e_{2} e_{1}^{3} + 18 e_{2}^{2} e_{1},\\
\D_6 &= 110 s_{33} + 50 s_{42} + s_{51} &&= e_{2} e_{1}^{4} + 47 e_{2}^{2} e_{1}^{2} + 61 e_{2}^{3},\\
\D_7 &= 700 s_{43} + 112 s_{52} + s_{61} &&= e_{2} e_{1}^{5} + 108 e_{2}^{2} e_{1}^{3} + 479 e_{2}^{3} e_{1},\\
\D_8 &= 4270 s_{44} + 3122 s_{53} + 238 s_{62} + s_{71} &&= e_{2} e_{1}^{6} + 233 e_{2}^{2} e_{1}^{4} + 2414 e_{2}^{3} e_{1}^{2} + 1385 e_{2}^{4}.
 \end{align*}
From \autoref{e_ribbons_pp}, we get
\begin{align}\label{e_ribbons_pp }
	 \D_n =\frac{e_2}{e_1^2} \sum_{\rho\in \mathcal{R}''_n} e_\rho,
\end{align}
Also observe that the Eulerian polynomials described by \autorefformula{formule_eulerian_pol} are homogeneous and $\gamma$-positive, hence Schur-positive (and unimodal). Small instances of their Schur and elementary functions expansions are as follows:
\begin{align*}
A_2 &= s_{2} + 3 \, s_{11} &&= e_{1}^{2} + 2\, e_{2},\\
A_3 &= s_{3}  + 10 \, s_{21}  &&= e_{1}^{3} + 8\, e_{2} e_{1},\\
A_4 &=  s_{4}+ 25\,  s_{31} + 40 \, s_{22} && = e_{1}^{4} + 22\, e_{2} e_{1}^{2} + 16\, e_{2}^{2},\\
A_5 &= s_{5} + 56 \, s_{41} + 245\,  s_{32}  &&= e_{1}^{5} + 52\, e_{2} e_{1}^{3} + 136\, e_{2}^{2} e_{1},\\
A_6 &=  s_{6} + 119 \, s_{51} + 1071 \, s_{42} +1225\,  s_{33} &&= e_{1}^{6} + 114\, e_{2} e_{1}^{4} + 720\, e_{2}^{2} e_{1}^{2} + 272\, e_{2}^{3},\\
A_7 &= s_{7} + 246 \, s_{61} + 4046 \, s_{52}  + 11326 \, s_{43} &&= e_{1}^{7} + 240\, e_{2} e_{1}^{5} + 3072\, e_{2}^{2} e_{1}^{3} + 3968\, e_{2}^{3} e_{1}.
\end{align*} 
Exploiting  \autoref{formule_eulerian_pol}, we may derive the formula (see \cite{Petersen2015})  
\begin{align}
    A_n = \sum_{2k< n} \sum_{j=0}^k (-1)^j \binom{n+2}{j} (k+1-j)^{n+1}  e_2^k\,p_{n-1-2k}
  \end{align}
which may be expanded as a polynomial in $\Z[e_1,e_2]$ using \autorefformula{p_to_e}. 
Moreover \autoref{qt_Foata_Schutzenberger_formula} (see \cite[Formula 1,3]{ShareshianWachs})  may be directly reformulated as
\begin{align}\label{formula_enumeration_no_double_descent}
    A_n   = \sum_{\sigma\in \DDesc'_{n}} e_{\rho(\sigma)},
\end{align}
where $\DDesc'_{n}$ stands for the set of permutations in $\Sym_n$ that have no ``double descents'' and no ``final descent''. Here $\rho(\sigma)=(c_1,c_2,\ldots, c_\ell)$ denotes the \unedefinition{ascent composition} of $n$ associated to $\sigma$, which is to say that $\sigma$ has ascents in position $(c_1+\ldots+c_j)_{j<\ell}$, with all $c_i\leq 2$ and $c_\ell=1$. 
\begin{figure}[ht]
\includegraphics[scale=.8]{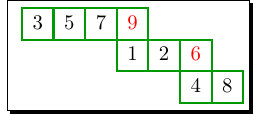}
\caption{The reading permutation of the above ribbon is $3579.126.48$, with descents marked by a point.}
\label{figure_reading_SYT_ribbon}
\end{figure}
These permutations also arise by reading standard young tableaux of ribbon shape $\rho$ (see \autorefname{Fig.}{figure_reading_SYT_ribbon}), with columns of length at most $2$, and a final column of length $1$ (see \autoref{schur_gamma_positivity}).  

The related family of binomial Eulerian polynomials (see \autoref{qt_binomial_eulerian}), affording the $e$-expansions
\begin{align*}
\A_0 &= 1 &&= 1,\\
\A_1 &= s_{1} &&= e_{1},\\
\A_2 &= s_{2} + 2 s_{11} &&=e_{1}^{2} + e_{2},\\
\A_3 &= s_{3} + 6 s_{21}  &&= e_{1}^{3} + 4 e_{2} e_{1},\\
\A_4 &=s_{4}  + 14 s_{31} + 18 s_{22}  &&= e_{1}^{4} + 11 e_{2} e_{1}^{2} + 5 e_{2}^{2},\\
\A_5 &=s_{5} + 30 s_{41} + 100 s_{32}  &&= e_{1}^{5} + 26 e_{2} e_{1}^{3} + 43 e_{2}^{2} e_{1},\\
\A_6 &=s_{6}  + 62 s_{51}+ 410 s_{42} + 410 s_{33}   &&= e_{1}^{6} + 57 e_{2} e_{1}^{4} + 230 e_{2}^{2} e_{1}^{2} + 61 e_{2}^{3},\\
\A_7&= s_{7} + 126 s_{61} + 1484 s_{52}  + 3500 s_{43}   &&= e_{1}^{7} + 120 e_{2} e_{1}^{5} + 990 e_{2}^{2} e_{1}^{3} + 906 e_{2}^{3} e_{1},
\end{align*}
with general $e$-expansion (see \cite[Thm 3.3]{MR4378084}):
\begin{align}
   \A_n = \sum_{\sigma\in\DDesc_n} e_{\rho(\sigma)}= \sum_{\rho\in \mathcal{R}_{n}} \# \SYT(\rho)\, e_\rho. \label{e_binomial_eulerian}
          \end{align}
The first equality is just a reformulation of  \autorefformula{qt_binomial_eulerian_formula} in terms of ascent composition of $\sigma$. In turn, the second equality (see \autoref{schur_gamma_positivity}) coins this in terms of
\unedefinition{Standard Young Tableaux}. We denote by $\SYT(\rho)$ the set of such tableaux have skew-shape $\rho$, with $\rho$ running over the set $\mathcal{R}_{n}$ of $n$-\unedefinition{cell ribbons with columns of length at most} $2$.  

%

\subsection{Some $\gamma$-anti-positive examples}
Recalling that \autorefformula{e_int_anti_positive} insures the $\gamma$-anti-positivity of $\eint{n}$. 
\autorefname{Lemma}{lem_gamma_anti} also implies the $\gamma$-anti-positivity of the $(q,t)$-factorials. Small instances are as follows: 
 \begin{align*}
\efact{1} &= 1,\\
\efact{2} &= e_{1},\\
\efact{3} &= e_{1}^{3} - e_{2} e_{1},\\
\efact{4} &= e_{1}^{6} - 3 e_{2} e_{1}^{4} + 2 e_{2}^{2} e_{1}^{2},\\
\efact{5} &= e_{1}^{10} - 6 e_{2} e_{1}^{8} + 12 e_{2}^{2} e_{1}^{6} - 9 e_{2}^{3} e_{1}^{4} + 2 e_{2}^{4} e_{1}^{2},\\
\efact{6} &= e_{1}^{15} - 10 e_{2} e_{1}^{13} + 39 e_{2}^{2} e_{1}^{11} - 75 e_{2}^{3} e_{1}^{9} + 74 e_{2}^{4} e_{1}^{7} - 35 e_{2}^{5} e_{1}^{5} + 6 e_{2}^{6} e_{1}^{3}.
 \end{align*}
Observe that the $(q,t)$-Catalan of \autorefformula{def_qt_catalan_formula} are not always Schur-positive but are $\gamma$-anti-positive:
\begin{align*}
&\Cat_2 = -s_{11} + s_{2}&&= e_{1}^{2} - 2 e_{2},\\
&\Cat_3 = s_{42} - s_{51} + s_{6}&&= e_{1}^{6} - 6 e_{2} e_{1}^{4} + 10 e_{2}^{2} e_{1}^{2} - 3 e_{2}^{3},\\
&\Cat_4 = s_{66} - s_{75} + s_{84} + s_{(10,2)} - s_{(11,1)} + s_{(12)}&&= e_{1}^{12} - 12 e_{2} e_{1}^{10} + 55 e_{2}^{2} e_{1}^{8} - 119 e_{2}^{3} e_{1}^{6}\\
              &&&\qquad\qquad \qquad+ 121 e_{2}^{4} e_{1}^{4} - 50 e_{2}^{5} e_{1}^{2} + 6 e_{2}^{6}.
\end{align*}
The different $(q,t)$-Catalan given by the \autorefname{Recurrence}{def_qt_catalan_recurrence} are not Schur-positive either, but they are ``graded $\gamma$-anti-same-sign'':
\begin{align*}
&\CatRec_2 = s_{1} &&= e_{1},\\
&\CatRec_3 = s_{11} + s_{3} &&= e_{1}^{3} - 2 e_{2} e_{1} + e_{2},\\
&\CatRec_4 = s_{22} + s_{31} - s_{32} + s_{33} + s_{41} + s_{6} &&= (e_{1}^{6} - 5 e_{2} e_{1}^{4} + 6 e_{2}^{2} e_{1}^{2})
									 \rouge{-}(- e_{2} e_{1}^{3} + 3 e_{2}^{2} e_{1}) + e_{2} e_{1}^{2}.
\end{align*}
We may recast the recurrence in the symmetric polynomial context as:
 \begin{align}\label{def_e_catalan_recurrence}
   \CatRec_{n+1}=\sum_{0\leq 2k\leq n+1} m_{(n-k,k)} \CatRec_{k} \CatRec_{n-k} ,
  \end{align}
and use \autorefformula{monomial_to_e} to coin this in terms of $e_1$ and $e_2$.
On the other hand, the more subtle $(q,t)$-Catalan related to the $\nabla$-operator are Schur-positive. For more on their $\gamma$-behaviour, see next section.
%
%
 
 Collecting some of our observations we have
   \begin{tcolorbox}[colback=azure!5!white]
\begin{lemma}\label{lemma_qt_binomials_semimagic} The following polynomials are all $\gamma$-anti-positive (lie in $\Gamma$):
\begin{itemize}
  \item The power-sum $p_n$, as well as $(-1)^b s_{ab}$ (see \ref{s_to_e}) and $(-1)^b m_{ab}$ (see \ref{monomial_to_e}),
  \item The $(q,t)$-integers $\eint{n}$ (see \ref{e_int_anti_positive}),
  \item The $(q,t)$-factorials $\efact{n}$,
  \item The $(q,t)$-binomials $\ebinom{n}{k}$, which satisfy recurrence \ref{e_recurrence_binom},
   \item The (simple) rational $(q,t)$-Catalan $\frac{1}{[a+b]} \ebinom{a+b}{a}$, with $a$ and $b$ co-prime.
   \end{itemize}
\end{lemma}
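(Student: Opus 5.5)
The plan rests on one observation: $\Gamma=\N[e_1,-e_2]$ is a semiring, so it is closed under sums and products. Every item in the list will be obtained by multiplying a few basic pieces already known to lie in $\Gamma$. The first three items come directly from formulas stated earlier.

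\emph{Power sums.} In \autorefformula{p_to_e} the coefficient of $e_1^{n-2j}(-e_2)^j$ is $\frac{n}{n-j}\binom{n-j}{j}=\binom{n-j}{j}+\binom{n-j-1}{j-1}$, as in \autoref{qt_formule_power_sum}. This is a positive integer, so $p_n\in\Gamma$.

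\emph{Schur and monomial functions.} For $a>b$, \autoref{s_to_e} gives $s_{ab}=\eint{a-b+1}\,e_2^b$, hence $(-1)^b s_{ab}=\eint{a-b+1}\,(-e_2)^b$. Similarly $m_{ab}=p_{a-b}\,e_2^b$ for $a>b$, so $(-1)^b m_{ab}=p_{a-b}\,(-e_2)^b$. In the diagonal case, $s_{aa}=m_{aa}=e_2^a$, so the sign $(-1)^a$ gives exactly $(-e_2)^a\in\Gamma$. All of these are products of elements of $\Gamma$.

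\emph{Integers and factorials.} The $(q,t)$-integers lie in $\Gamma$ by \autoref{lemma_e_int}. The factorials $\efact{n}=\eint{1}\cdots\eint{n}$ are then products in $\Gamma$.

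The real issue is the two quotients: the binomials and the rational Catalan polynomials. Division does not preserve $\Gamma$, so I would factor further, into homogenized cyclotomic polynomials. Set $\Phi_d(q,t):=t^{\varphi(d)}\Phi_d(q/t)$. Then $\eint{n}=\prod_{d\mid n,\,d>1}\Phi_d(q,t)$.

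The key step, and the one I expect to be the main obstacle, is to show $\Phi_d(q,t)\in\Gamma$ for every $d\ge2$.
\begin{itemize}
\item For $d=2$ this is immediate, since $\Phi_2=q+t=e_1$.
\item For $d\ge3$, the primitive $d$-th roots of unity come in conjugate pairs $\zeta^{\pm1}=e^{\pm i\theta}$. Each pair contributes the factor
\[
(q-\zeta t)(q-\bar\zeta t)=q^2-2\cos\theta\,qt+t^2=e_1^2+(2+2\cos\theta)(-e_2).
\]
This has nonnegative real coefficients in $e_1$ and $-e_2$, because $2+2\cos\theta\ge0$.
\item Hence the product $\Phi_d(q,t)$ has nonnegative real coefficients in the monomials $e_1^i(-e_2)^j$.
\item These coefficients are in fact integers. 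The change of basis from $\{q^at^b\}$ to $\{e_1^i e_2^j\}$ is unitriangular over $\Z$, and $\Phi_d(q,t)$ is a symmetric polynomial in $\Z[q,t]$, hence lies in $\Z[e_1,e_2]$. So $\Phi_d(q,t)\in\Gamma$.
\end{itemize}

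With this in hand, the two quotients follow by unique factorization in $\QQ[q,t]$.

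\emph{Binomials.} The classical count gives
\[
\ebinom{n}{k}=\prod_{d>1}\Phi_d^{\,\lfloor n/d\rfloor-\lfloor k/d\rfloor-\lfloor (n-k)/d\rfloor},
\]
and every exponent lies in $\{0,1\}$. So $\ebinom{n}{k}$ is a product of elements of $\Gamma$.

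\emph{Rational Catalan.} $\frac{1}{\eint{a+b}}\ebinom{a+b}{a}$ is a quotient of products of the $\Phi_d$'s. It is a genuine polynomial when $\gcd(a,b)=1$, which is the classical fact, also visible from the same floor-function count. Its factorization into cyclotomic factors therefore has nonnegative exponents, and it lies in $\Gamma$.

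Finally, recurrence \ref{e_recurrence_binom} is not needed for positivity; it only serves as a consistency check.
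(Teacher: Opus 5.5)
Your proof is correct and is essentially the paper's own argument: the paper shows that the cyclotomic polynomials $\varphi_d$ are $\gamma$-anti-positive (Proposition~\ref{lem_lucas_atom_gamma_positive}, stated via the Lucas atoms $\widetilde{\varphi}_d$) by the same conjugate-pair computation $(q-\omega t)(q-\overline{\omega}t)=e_1^2-(2+2\,\mathrm{Re}\,\omega)\,e_2$ followed by integrality, and it then derives the factorial, binomial and rational Catalan items from the factorizations \eqref{fact_cyclo_factorization}--\eqref{formule_qtcat_cyclo}, while the other items are read off from \eqref{p_to_e}, \eqref{s_to_e}, \eqref{monomial_to_e} and \eqref{e_int_anti_positive} exactly as you do. One small correction: recurrence \eqref{e_recurrence_binom} is not just a consistency check, because its coefficients $p_k/2$ and $p_n/2$ have nonnegative rational coefficients in the $e_1^i(-e_2)^j$, so induction together with integrality of the $e$-expansion gives an independent proof of the binomial item, and this is the route the statement of the lemma points to.
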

\end{tcolorbox}
 The next-to-last statement is  illustrated in \autoref{tab_semi_magic} (as expected with $\gamma$-anti-positivity, signs are tied with terms involving odd powers of $e_2$).
 \begin{table}[ht]
\renewcommand{\arraystretch}{1.2}
\begin{small}
\begin{tabular}{|c||l|l|l|l|l|}
\hline
$\textstyle \ebinom{n+k}{k}$ & $0$ & $1$ & $2$ & $3$ & $4$  \\[4pt]
\hline
\hline
$0$ & $1$ & $1$ & $1$ & $1$ & $1$ \\
\hline
$1$ & $1$ 
      & $e_{1}$ 
      & $e_{1}^{2} - e_{2}$ 
      & $e_{1}^{3} - 2 e_{2} e_{1}$ 
      & $e_{1}^{4} - 3 e_{2} e_{1}^{2} + e_{2}^{2}$ \\
\hline
$2$ & $1$ 
       & $e_{1}^{2} - e_{2}$ 
       & $e_{1}^{4} - 3 e_{2} e_{1}^{2} + 2 e_{2}^{2}$ 
       & $\begin{scriptstyle}\begin{aligned} e_{1}^{6} &- 5 e_{2} e_{1}^{4}+ 7 e_{2}^{2} e_{1}^{2} \\ 
      			     & - 2 e_{2}^{3}\end{aligned}\end{scriptstyle}$ 
       & $\begin{scriptstyle}\begin{aligned}\\[-5pt]  e_{1}^{8} &- 7 e_{2} e_{1}^{6}+ 16 e_{2}^{2} e_{1}^{4} \\ 
      			     &  - 13 e_{2}^{3} e_{1}^{2} + 3 e_{2}^{4}\end{aligned}\end{scriptstyle}$ \\[8pt]
\hline
$3$ & $1$ 
      & $e_{1}^{3} - 2 e_{2} e_{1}$ 
      & $\begin{scriptstyle}\begin{aligned} e_{1}^{6} &- 5 e_{2} e_{1}^{4}+ 7 e_{2}^{2} e_{1}^{2} \\ 
      			     & - 2 e_{2}^{3}\end{aligned}\end{scriptstyle}$ 
      & $\begin{scriptstyle}\begin{aligned} e_{1}^{9} &- 8 e_{2} e_{1}^{7}+ 22 e_{2}^{2} e_{1}^{5} \\ 
      			     &  - 23 e_{2}^{3} e_{1}^{3} + 6 e_{2}^{4} e_{1}\end{aligned}\end{scriptstyle}$ 
      & $\begin{scriptstyle}\begin{aligned}\\[-5pt] e_{1}^{12} &- 11 e_{2} e_{1}^{10} + 46 e_{2}^{2} e_{1}^{8}\\ 
      			     & - 90 e_{2}^{3} e_{1}^{6} + 81 e_{2}^{4} e_{1}^{4} \\ 
      			     &- 28 e_{2}^{5} e_{1}^{2} + 3 e_{2}^{6}\end{aligned}\end{scriptstyle}$  \\[14pt]
\hline
$4$ & $1$ 
      & $e_{1}^{4} - 3 e_{2} e_{1}^{2} + e_{2}^{2}$ 
      & $\begin{scriptstyle}\begin{aligned} e_{1}^{8} &- 7 e_{2} e_{1}^{6}+ 16 e_{2}^{2} e_{1}^{4} \\ 
      			     &  - 13 e_{2}^{3} e_{1}^{2} + 3 e_{2}^{4}\end{aligned}\end{scriptstyle}$
      & $\begin{scriptstyle}\begin{aligned} e_{1}^{12} &- 11 e_{2} e_{1}^{10} + 46 e_{2}^{2} e_{1}^{8}\\ 
      			     & - 90 e_{2}^{3} e_{1}^{6} + 81 e_{2}^{4} e_{1}^{4} \\ 
      			     &- 28 e_{2}^{5} e_{1}^{2} + 3 e_{2}^{6}\end{aligned}\end{scriptstyle}$ 
      & $\begin{scriptstyle}\begin{aligned}\\[-5pt] e_{1}^{16} & - 15 e_{2} e_{1}^{14} + 92 e_{2}^{2} e_{1}^{12}\\ 
      			     & - 296 e_{2}^{3} e_{1}^{10} + 533 e_{2}^{4} e_{1}^{8} \\ 
      			     &- 532 e_{2}^{5} e_{1}^{6}+ 277 e_{2}^{6} e_{1}^{4}\\ 
      			     & - 68 e_{2}^{7} e_{1}^{2} + 6 e_{2}^{8}\end{aligned}\end{scriptstyle}$  \\[20pt]
\hline
\end{tabular}
\end{small}
\caption{$e$-binomials are $\gamma$-anti-positive.}\label{tab_semi_magic}
 \end{table}
 
As follows from the upcoming discussion (see \autoref{sec_CGF}), the $(q,t)$-Narayana given by \autorefformula{def_q_narayana_formula} are $\gamma$-anti-positive. We get 
\begin{table}[ht]
\begin{small}
\begin{tabular}{|c|l|l|l|l|l|l|}
\hline
$ N_{nk}$ & $0$ & $1$ & $2$  \\[4pt] 
\hline
\hline
$1$ & $1$ & &\\
$2$ & $1$ & &\\
$3$ & $1$ & $e_{1}^{2} - e_{2}$ & \\
$4$ & $1$ & $e_{1}^{4} - 3 e_{2} e_{1}^{2} + 2 e_{2}^{2}$ & \\
$5$ & $1$ & $e_{1}^{6} - 5 e_{2} e_{1}^{4} + 7 e_{2}^{2} e_{1}^{2} - 2 e_{2}^{3}$ & $e_{1}^{8} - 7 e_{2} e_{1}^{6} + 17 e_{2}^{2} e_{1}^{4} - 16 e_{2}^{3} e_{1}^{2} + 4 e_{2}^{4}$ \\
$6$ & $1$ & $e_{1}^{8} - 7 e_{2} e_{1}^{6} + 16 e_{2}^{2} e_{1}^{4} - 13 e_{2}^{3} e_{1}^{2} + 3 e_{2}^{4}$ & $e_{1}^{12} - 11 e_{2} e_{1}^{10} + 47 e_{2}^{2} e_{1}^{8} - 97 e_{2}^{3} e_{1}^{6} + 97 e_{2}^{4} e_{1}^{4} - 41 e_{2}^{5} e_{1}^{2} + 6 e_{2}^{6}$ \\
\hline
\end{tabular}
\end{small}
  \caption{$e$-Narayana $N_{nk}$.}\label{narayana_table}
\end{table}
As we will see further below, many other $(q,t)$-polynomials encountered in combinatorics are $\gamma$-anti-positive. 

\section{Graded \texorpdfstring{$\gamma$}{gamma}-positivity and \texorpdfstring{$\gamma$}{gamma}-anti-positivity}

Observe that most of our previous examples are homogeneous, and thus are direct translations of usual $q$-analogues. The situation is quite different when one considers cases that are not homogeneous, for which setting $t=1$ is irreversible. In those instances, there is typically a \unedefinition{degree grading} that makes it possible to extend the various notions.

For the signless Stirling $e$-polynomials of the first kind, we evidently have the symmetric function recurrence (see \autorefname{Recurrence}{rec_stirling_first_kind})
\begin{align}\label{rec_e_stirling_first_kind}
    	 c_{nk}:=c_{(n-1,k-1)}+\eint{n-1}\,c_{(n-1,k)},
\end{align} 
giving graded $e$-expansions. Each graded component is $\gamma$-anti-positive, since this property is preserved by the recursion. 
For example, we get the values in \autoref{tab_e_stirling_first_kind}.
\begin{table}[ht]
\renewcommand{\arraystretch}{1.2}
\begin{small}
\begin{tabular}{|c||l|l|l|l|l|}
\hline
$\textstyle c_{nk}$ &  $1$ & $2$ & $3$ & $4$ & $5$ \\[4pt]
\hline
\hline
$2$ & $1$ & $1$ & &  &\\
\hline
$3$ & $e_{1}$ & $e_{1} + 1$ & $1$ & & \\
\hline
$4$ & $e_{1}^{3} - e_{2} e_{1}$ & $(e_{1}^{3} - e_{2} e_{1}) + (e_{1}^{2} - e_{2}) + e_{1}$ & $(e_{1}^{2} - e_{2}) + e_{1} + 1$ & $1$  &\\
\hline
$5$ & $e_{1}^{6} - 3 e_{2} e_{1}^{4} + 2 e_{2}^{2} e_{1}^{2}$ 
	& $\begin{scriptstyle}\begin{aligned} \\[-5pt]
	   (e_{1}^{6}& - 3 e_{2} e_{1}^{4}+ 2 e_{2}^{2} e_{1}^{2})  \\
	   & + (e_{1}^{5}- 3 e_{2} e_{1}^{3}  + 2 e_{2}^{2} e_{1})  \\
	   & + (e_{1}^{4} - 2 e_{2} e_{1}^{2}) \\
	   & + (e_{1}^{3} - e_{2} e_{1})\\[-8pt]
	   \end{aligned}\end{scriptstyle}$ 
	& $\begin{scriptstyle}\begin{aligned} \\[-5pt]
	   (e_{1}^{5} &- 3 e_{2} e_{1}^{3} + 2 e_{2}^{2} e_{1})\\
	   &+ (e_{1}^{4}  - 2 e_{2} e_{1}^{2})\\
	   & + (2 e_{1}^{3} - 3 e_{2} e_{1})\\
	   & + (e_{1}^{2} - e_{2}) + e_{1}\\[-8pt]
	   \end{aligned}\end{scriptstyle}$ 
	& $\begin{scriptstyle}\begin{aligned} (e_{1}^{3} &- 2 e_{2} e_{1})\\
	     & +(e_{1}^{2} - e_{2})\\
	     &  + e_{1} + 1\end{aligned}\end{scriptstyle}$ & $1$ \\
\hline
\end{tabular}
\end{small}
\caption{Graded sign-less Stirling $e$-polynomials of the first kind.}\label{tab_e_stirling_first_kind}
 \end{table}
Small instances of those of the second kind, satisfying the recurrence
\begin{align}\label{rec_e_stirling_second_kind}
  S_{nk}:=S_{(n-1,k-1)}+\eint{k}\, S_{(n-1,k)},
 \end{align}
are displayed in \autoref{tab_e_stirling_second_kind}. They are also $\gamma$-anti-positive.
\begin{table}[ht]
\renewcommand{\arraystretch}{1.2}
\begin{small}
\begin{tabular}{|c||l|l|l|l|l|ll}
\hline
$\textstyle S_{nk}$ &  $1$ & $2$ & $3$ & $4$ & $5$\\[4pt]
\hline
\hline
$2$ & $1$ & $1$ &&&\\
\hline
$3$ & $1$ & $e_{1} + 1$ & $1$ && \\
\hline
$4$ & $1$ & $e_{1}^{2} + e_{1} + 1$ & $e_{1}^{2} - e_{2} + e_{1} + 1$ & $1$& \\
\hline
$5$ & $1$ & $e_{1}^{3} + e_{1}^{2} + e_{1} + 1$ 
		& $\begin{scriptstyle}\begin{aligned} \\[-5pt]
	   	     (e_{1}^{4} &- 2 e_{2} e_{1}^{2}+ e_{2}^{2})\\
		     & + (e_{1}^{3} - e_{2} e_{1})\\
		     &  + (2 e_{1}^{2} - e_{2}) \\
		     & + e_{1} + 1\\[-8pt]
	  	 \end{aligned}\end{scriptstyle}$ 
		& $\begin{scriptstyle}\begin{aligned} 
			(e_{1}^{3} &- 2 e_{2} e_{1}) \\
		     & + (e_{1}^{2} - e_{2}) \\
		     & + e_{1} + 1
		    \end{aligned}\end{scriptstyle}$ 
		& $1$ \\
\hline
\end{tabular}
\end{small}
\caption{Graded Stirling $e$-polynomials of the second kind.}\label{tab_e_stirling_second_kind}
 \end{table}

For the version of $(q,t)$-Catalan polynomials of \autorefname{Definition}{def_qt_catalan_nabla}, the corresponding expansions of their image under the $\Anti{(-)}$-involution have the following expansions in the $e$-basis:
\begin{align*}
\Anti{\CatNab}_3  &=(e_{1}^{3} + 2 e_{2} e_{1})   + e_{2} ,\\
\Anti{\CatNab}_4  &=\left(e_{1}^{6} + 5 e_{2} e_{1}^{4} + 6 e_{2}^{2} e_{1}^{2} + e_{2}^{3}\right) 
			- \left(e_{2} e_{1}^{3} + 2 e_{2}^{2} e_{1}\right)  
			- \left(e_{2} e_{1}^{2} + e_{2}^{2}\right)  \\
\Anti{\CatNab}_5  &=\left(e_{1}^{10} + 9 e_{2} e_{1}^{8} + 28 e_{2}^{2} e_{1}^{6} + 35 e_{2}^{3} e_{1}^{4} + 15 e_{2}^{4} e_{1}^{2} +e_{2}^{5}\right)  \\
			&\qquad - \left(e_{2} e_{1}^{7} + 6 e_{2}^{2} e_{1}^{5} + 10 e_{2}^{3} e_{1}^{3} + 4 e_{2}^{4} e_{1}\right)  
			- \left(e_{2} e_{1}^{6} + 4 e_{2}^{2} e_{1}^{4} + 3 e_{2}^{3} e_{1}^{2}\right)   \\
			&\qquad - \left(e_{2} e_{1}^{5} + 4 e_{2}^{2} e_{1}^{3} + 4 e_{2}^{3} e_{1}\right)  
			+ \left(e_{2}^{2} e_{1}^{2} + e_{2}^{3}\right), 
\end{align*}
with terms grouped by degree. One observes that, up to a common sign depending on degree, each homogeneous component is $\gamma$-positive. However this nice property breaks down at $n=7$, with the degree $17$ component. Still, a twisted version of these, described below, does give rise to $\gamma$-positivity. To present this next step of our exploration, we consider the invertible\footnote{Up to applying $\Anti{(-)}$ and a simple sign change for each $s_{ab}$.} linear operator $F \longmapsto \widehat{F}$, on $\Lambda_2$, such that
\begin{align}\label{def_hat_operator}
    s_{ab} \longmapsto \widehat{s}_{ab}:=\lint{a-b+1}\cdot e_2^{b}.
\end{align}
As $\widehat{s}_{ab}$ is $\gamma$-positive, this operator  maps any graded Schur-positive $F$ to a graded $\gamma$-positive $\widehat{F}$.
In particular we get the graded $\gamma$-positivity of $\widehat{\CatNab}_n$. Here are some values
\begin{align*}
\widehat{\CatNab}_3 &= (e_{1}^{3} + 2 \, e_{1} e_{2}) + e_{2},\\
\widehat{\CatNab}_4 &=(e_{1}^{6} + 5 \, e_{1}^{4} e_{2} + 6 \, e_{1}^{2} e_{2}^{2} + e_{2}^{3})
					+ (e_{1}^{3} e_{2} + 2 \, e_{1} e_{2}^{2}) 
					+ (e_{1}^{2} e_{2} + e_{2}^{2}), \\
\widehat{\CatNab}_5 &= (e_{1}^{10} + 9 \, e_{1}^{8} e_{2} + 28 \, e_{1}^{6} e_{2}^{2} + 35 \, e_{1}^{4} e_{2}^{3} 
							+ 15 \, e_{1}^{2} e_{2}^{4} + e_{2}^{5})\\
					&\qquad\qquad+(e_{1}^{7} e_{2} + 6 \, e_{1}^{5} e_{2}^{2} + 10 \, e_{1}^{3} e_{2}^{3} + 4 \, e_{1} e_{2}^{4})\\
					&\qquad\qquad+(e_{1}^{6} e_{2} + 6 \, e_{1}^{4} e_{2}^{2} + 9 \, e_{1}^{2} e_{2}^{3} + 2 \, e_{2}^{4})\\ 
					&\qquad\qquad+(e_{1}^{5} e_{2} + 4 \, e_{1}^{3} e_{2}^{2} + 4 \, e_{1} e_{2}^{3})
						+(e_{1}^{2} e_{2}^{2} + e_{2}^{3}). 					
\end{align*}

\section{Schur \texorpdfstring{$\gamma$}{gamma}-positivity and \texorpdfstring{$\gamma$}{gamma}-anti-positivity}\label{schur_gamma_positivity}
To extend our discussion to notions considered in \cite{liao2026equivariant,ShareshianWachs}, we will now be dealing with expressions involving two kinds of symmetric functions. One involving the Frobenius transform of $\Sym_n$-representations (for which we have variables $\bx=(x_i)_i$), and the other associated to $(q,t)$-counting just as we have been doing up to now. To avoid 
explicit mention of these underlying variables, we may write\footnote{Being careful to conserve $(q,t)$-expressions on the left and $\bx$-expressions  on the right.} $\alpha\otimes \beta$ for $\alpha(q,t)\cdot \beta(\bx)$. 
 
As an introduction, consider the three contexts associated to the following families of \unedefinition{ribbons}\footnote{A connected skew-diagram that contain no $2\times 2$ square.} with columns of length at most $2$: 
\begin{equation}\label{containment}
	 \mathcal{R}''_{n}\subseteq \mathcal{R}'_{n} \subseteq \mathcal{R}_{n},
\end{equation}
where
\begin{itemize}
\item $\mathcal{R}_{n}$ is the set of $n$-cell ribbons having columns of length at most $2$,
\item $\mathcal{R}'_{n}$ stands for the subset of $\mathcal{R}_{n}$ of ribbons having its last column of length $1$, and
\item $\mathcal{R}''_{n}$ for the subset of $\mathcal{R}'_{nk}$ with the further restriction that the first column is also of length $1$.
\end{itemize}
It is handy to encode these ribbons as compositions of $n$ with parts of sizes $1$ or $2$ (see \autorefname{Fig.}{figure_ribbon}). We may then denote by $e_\rho$ the product of the $e_r$, for $r$ running the part of this composition.


Associated to each of the above families are the following \unedefinition{positive Schur}-$\gamma$ expansions (has may be found in  \cite{liao2026equivariant,ShareshianWachs}  with a slightly different notation):
\begin{align}
   \mathcal{Q}_n(\bx;q,t)  &= \sum_{\rho\in\mathcal{R}_{n}} e_\rho \otimes s_\rho(\bx),\label{Schur_binomial_eulerian}\\
   \mathcal{Q}'_n(\bx;q,t) &:=\sum_{\rho\in\mathcal{R}'_{n}} e_\rho \otimes s_\rho(\bx),\\
   \mathcal{Q}''_n(\bx;q,t) &:=\sum_{\rho\in\mathcal{R}''_{n}} e_\rho \otimes s_\rho(\bx).
 \end{align}
where $s_{\rho}(\bx)$ denotes the skew-Schur polynomials obtained by considering the ribbon $\rho$ as a skew-partition (see \autorefname{Fig.}{figure_ribbon}). Recalling the fact that $\scalar{ s_\rho(\bx)}{p_1^n}$ is equal to the number of standard Young tableaux of shape $\rho$, we get \autorefformula{e_binomial_eulerian}  from \autoref{Schur_binomial_eulerian}. 
 \begin{figure}[ht]
\includegraphics[scale=.7]{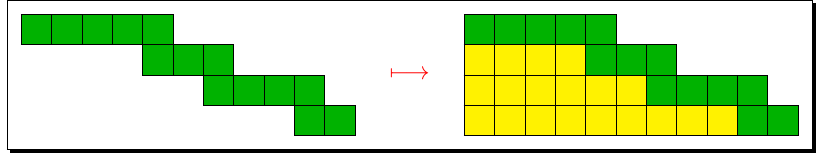}
\caption{Ribbon $11112121121 \in \mathcal{R}''_{(14,3)}$ $\mapsto$ skew-diagram  $(11,10,7,5)/(9,6,4)$.}
\label{figure_ribbon}
\end{figure}

\vskip-15pt\noindent
For example, we get the  ($e\,\otimes$\,Schur)-expansions:
\begin{align*}
&\mathcal{Q}''_5 =e_{1}^{4} \otimes s_{5} + 2 e_{2} e_{1}^{2} \otimes s_{41} +
					 2 e_{2} e_{1}^{2} \otimes s_{32}  ,\\
&\mathcal{Q}'_5 = e_{1}^{4} \otimes s_{5}+ 3 e_{2} e_{1}^{2} \otimes s_{41} + \left(2 e_{2} e_{1}^{2} + e_{2}^{2}\right)\otimes s_{32} 
												+ e_{2}^{2} \otimes s_{311}  +e_{2}^{2} \otimes s_{221}  ,\\
&\mathcal{Q}_5 =  e_{1}^{4} \otimes s_{5} + 4 e_{2} e_{1}^{2} \otimes s_{41} + \left(2 e_{2} e_{1}^{2} + 3 e_{2}^{2}\right)\otimes s_{32} +
				 3 e_{2}^{2} \otimes s_{311} +2 e_{2}^{2} \otimes s_{221}.
\end{align*}
These expressions are manifestly Schur\,$\otimes$\,Schur positive, which is also the case for $\mathcal{Q}_5-\mathcal{Q}'_5$ and $\mathcal{Q}'_5-\mathcal{Q}''_5$ (in view of the \autorefname{Inclusions}{containment}). Hence, we have unimodality of the associated $(q,t)$-enumeration for each irreducible component in these expressions. 
Other examples of \unedefinition{equivariant} $\gamma$-positivity, are considered in \cite{ShareshianWachs} and \cite{liao2026equivariant}. 

Also of interest are the $\widehat{(-)}$-images of the $(q,t)$-coefficients of Schur functions in $\nabla(e_n)$:
  $$\widehat{\nabla}(e_n(\bx)):=\sum_{\mu\vdash n} \widehat{ \scalar{ \nabla(e_n)}{s_\mu}}\, s_\mu(\bx),$$
which are graded Schur $\gamma$-positive. 

Once symmetrized, the $\Sym_n$-equivariant Chow polynomials of the braid matroid $B_n$ considered in \cite{kannan2026} is Schur $\gamma$-positive (as shown in  \cite{Ferroni2024Chow} for the specialization mentioned below). Some small values are as follows:
\begin{align*}
B_3 &= e_1\otimes s_3,\\[5pt]
B_4&= e_2 \otimes s_{22}
+e_2 \otimes s_{31}
+( e_1^2 + e_2 ) \otimes s_4,
\\[5pt]
B_5&=e_1e_2\otimes s_{221}
+3e_1e_2 \otimes s_{32}
+4e_1e_2\otimes s_{41}
+( e_1^3+ 2e_1e_2) \otimes s_{5},
\\[5pt]
B_6&=e_2^2 \otimes s_{2211}
+( 2 e_2 e_1^2+ 3e_2^2 ) \otimes s_{222}
+e_2^2 \otimes s_{3111}
+( 2 e_2 e_1^2+ 8 e_2^2 ) \otimes s_{321}\\
    &\qquad 
+( 2 e_2 e_1^2+ 4 e_2^2 ) \otimes s_{33}
+( e_2 e_1^2  + 5 e_2^2 ) \otimes s_{411}
+( 9 e_2 e_1^2 + 10 e_2^2 ) \otimes s_{42}\\
    &\qquad 
+( 7 e_2 e_1^2+ 7e_2^2 ) \otimes s_{51}
+( e_1^4+ 5e_2 e_1^2+ 3e_2^2 ) \otimes s_{6}.
\end{align*}
To compare this with \cite[Table 1]{kannan2026}; one sets $q=1$, hence $e_1=1+t$ and $e_2=t$.

\section{Probability of positivity}
When considering positivity phenomenons, one may wonder if these are rare events. The following results shed specific light on this, by measuring precisely how rarely positivity occurs in a random sample of objects considered. We may also consider that we here are comparing the relative ``strength'' of different properties.

Adapting a general result of \cite{Patrias}, we obtain the following.
   \begin{tcolorbox}[colback=azure!5!white]
\begin{proposition}
	The probability of Schur-positivity (unimodality) for a random\footnote{\ Taking an adequate limit, we may assume that coefficients are selected at random with uniform distribution.} monomial positive symmetric function (in $\N\{ m_{ab} \,|\, a+b=d\}$) is
	  $$((\lfloor d/2 \rfloor)!)^{-1}. $$
The probability of $\gamma$-positivity (or $\gamma$-anti-positivity) for a random Schur positive symmetric function (in $\N\{ s_{ab} \,|\, a+b=d\}$) is
         $$\prod_{2a+b= d}{\binom{b}{\lfloor b/2\rfloor}}^{-1}$$
\end{proposition}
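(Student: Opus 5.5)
The plan is to read "random element" as in \cite{Patrias}: coefficients in the source basis are drawn uniformly from the dilated simplex $\{x\ge 0,\ \sum x_i\le N\}$, and we let $N\to\infty$. In that limit the probability that a random $F$ lies in a target cone $C$ equals the fraction of the standard simplex $\Delta=\{x\ge0,\ \sum x_i\le1\}$ occupied by $C$. When $C$ is the simplicial cone spanned by the target basis vectors $v_0,\dots,v_k$, written in source coordinates, $C\cap\Delta$ is the simplex with vertices $0$ and $v_j/\|v_j\|_1$. Its volume is $|\det(v_0,\dots,v_k)|/\bigl((k+1)!\prod_j\|v_j\|_1\bigr)$, while $\Delta$ has volume $1/(k+1)!$. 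So the key intermediate formula is
$$\mathrm{Prob}(F\in C)=\frac{|\det V|}{\prod_j \|v_j\|_1}.$$
Both statements then reduce to two computations: a determinant, and the coordinate sums of each target basis vector.

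\emph{First statement (Schur cone inside the monomial cone).} By \autoref{two_kostka_matrix}, the matrix expressing the $s_{ab}$ in terms of the $m_{ab}$ is unitriangular, so $|\det V|=1$. Also $\|s_{ab}\|_1$, measured in the $m$-basis, is the number of $m_{(a-i,b+i)}$ with $0\le 2i\le a-b$, namely $\lfloor (a-b)/2\rfloor+1$. Letting $b$ run over $0,\dots,\lfloor d/2\rfloor$, the product of these sums is a factorial in $\lfloor d/2\rfloor$. Equivalently, Schur-positivity of $\sum c_i m_{(d-i,i)}$ means that $c_0\le c_1\le\dots$, and the probability that i.i.d.\ uniform values come in increasing order is one over a factorial. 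I will have to check the indexing carefully here. With $\lfloor d/2\rfloor+1$ coefficients the naive count gives $1/(\lfloor d/2\rfloor+1)!$. To recover the stated $1/(\lfloor d/2\rfloor)!$, I expect to either discard the term $m_{(d/2,d/2)}$ for $d$ even, or normalize away one coefficient, in the way \cite{Patrias} does. Matching this convention is the main point of care in this part.

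\emph{Second statement ($e$-cone inside the Schur cone).} Here the vectors $v_j$ are the Schur expansions of $e_1^{b}e_2^{a}$ with $2a+b=d$. Since $e_2=s_{11}$ acts on two-row shapes by shifting, $e_1^be_2^a=e_2^a\,s_1^b$. The Pieri rule \autoref{Pieri-Formula}, restricted to two parts, gives $s_1^b=\sum_\lambda f^\lambda s_\lambda$, where $\lambda$ runs over partitions of $b$ with at most two rows. The transition matrix between the $e$ and $s$ bases is again unitriangular in dominance order, because the leading term of $e_2^a e_1^b$ is $s_{(a+b,a)}$. So $|\det V|=1$ and only the sums $\|v_j\|_1$ matter. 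The sum $\sum_\lambda f^\lambda$ over shapes with at most two rows counts standard tableaux with at most two rows, which is also the number of lattice paths of length $b$ that stay weakly above the axis. The classical reflection argument gives $\binom{b}{\lfloor b/2\rfloor}$. Multiplying over $2a+b=d$ yields the stated product. For $\gamma$-anti-positivity, \autoref{lem_gamma_anti} together with the sign change $s_{ab}\mapsto(-1)^b s_{ab}$ (compare \autoref{s_to_e}) transports everything, up to signs, to the same determinant and the same coordinate sums.

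The main obstacle is to set up the measure on the semimodule precisely, so that "random" yields exactly the volume ratio above and the stated normalization. After that, the combinatorial inputs are the unitriangularity and the central binomial count of two-row standard tableaux.
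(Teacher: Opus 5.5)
Your route is the paper's own. It also adapts the Patrias--van Willigenburg ratio $|\det V|/\prod_j\|v_j\|_1$, and your simplex model is the right reading of ``random'': with i.i.d.\ uniform coefficients the second formula already fails at $d=4$, giving $2/27$ instead of $1/12$. Your count $\binom{b}{\lfloor b/2\rfloor}$ of two-row standard tableaux is the same column sum the paper computes as $\langle e_2^ae_1^b,h_{n-\lfloor n/2\rfloor}h_{\lfloor n/2\rfloor}\rangle$ via $e_2^\perp$. So the $\gamma$-positivity formula is fine.

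The gap is the step where you plan to recover $1/(\lfloor d/2\rfloor)!$ by choosing a convention. That cannot be done: the value your argument produces, $1/(\lfloor d/2\rfloor+1)!$, is the correct one, and the printed formula is off by one. For $d=2$ or $3$ the printed value is $1$. But $x\,m_{(d)}+y\,m_{(d-1,1)}$ is Schur-positive iff $y\ge x$, which has probability $1/2$. Indeed $m_{(d)}=q^d+t^d$, and an open piece of the cone around it, are not unimodal, so no model with a density gives $1$.

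Neither of your patches works:
\begin{itemize}
\item There is no $m_{(d/2,d/2)}$ to discard when $d$ is odd.
\item Freezing the coefficient of $m_{(d)}$ (i.e.\ keeping only partitions with exactly two parts) does give $1/(\lfloor d/2\rfloor)!$ in the limit. But Patrias--van Willigenburg do not do this. Applying the same freezing in the second part (to the coefficient of $s_{(d)}$) removes the factor $\binom{d}{\lfloor d/2\rfloor}^{-1}$ from the product, so one normalization cannot serve both parts.
\end{itemize}
You should state and prove $1/(\lfloor d/2\rfloor+1)!$, using that $d$ has $\lfloor d/2\rfloor+1$ partitions with at most two parts. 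The paper's displayed identity has the matching slip: it writes $\ell(\mu)=2$ where $\ell(\mu)\le 2$ is needed.

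Your ``transport up to signs'' for the $\gamma$-anti-positivity clause also fails as written. The volume formula needs the target cone to lie inside the source orthant, i.e.\ to have nonnegative coordinates. But $\Gamma_d\not\subseteq\N\{s_{ab}\}$; for example $(-e_2)e_1=-s_{21}$. The matrix expressing $(-e_2)^ae_1^{d-2a}$ in the Schur basis has entries $(-1)^a f^{(d-a-b,\,b-a)}$, with entire columns nonpositive for odd $a$. So equal determinant and equal absolute column sums prove nothing.

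Taken literally, the clause is false. For $d=3$,
$x_0s_3+x_1s_{21}=x_0e_1^3+(x_1-2x_0)e_2e_1$. This is $\gamma$-positive iff $x_1\ge 2x_0$, with probability $1/3$ as the formula says. It is $\gamma$-anti-positive iff $x_1\le 2x_0$, with probability $2/3$. Only the tautological version holds: since $\Anti{(-)}$ is a linear isomorphism, a random element of $\N\{\Anti{s}_{ab}\}$ lies in $\Gamma$ with the same probability $\prod\binom{b}{\lfloor b/2\rfloor}^{-1}$.
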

\end{tcolorbox}
To get the adequate random distribution for Schur-positive symmetric polynomial (with a similar condition on random selection),  we exploit the identity
\begin{equation}
    \sum_{\newatop{\mu\vdash n}{ \ell(\mu)=2}} s_\mu =  h_{n-\lfloor n/2\rfloor}h_{\lfloor n/2\rfloor},
\end{equation}
and observe that
\begin{align*}
    \scalar{e_2^a e_1^b}{h_{n-\lfloor n/2\rfloor}h_{\lfloor n/2\rfloor}}
    		&=   \scalar{e_1^b}{(e_2^\perp)^ah_{n-\lfloor n/2\rfloor}h_{\lfloor n/2\rfloor}}\\
    		&=   \scalar{e_1^b}{ h_{b-\lfloor b/2\rfloor}h_{\lfloor b/2\rfloor}}\\
		&= \binom{b}{\lfloor b/2\rfloor}.
\end{align*}
Recall here that $e_2^\perp$ stands for the \unedefinition{skew-operator}, which is dual to multiplication by $e_2$ for the Hall scalar product, is such that $e_2^\perp s_{(a,b)}= s_{(a-1,b-1)}$ and $e_2^\perp s_{(a)}=0$.


\section{\texorpdfstring{$(q,t)$}{qt}-Cyclotomic polynomials}\label{section_cyclotomic_polynomials}

The usual cyclotomic polynomial $\varphi_n(q)$ is the minimal polynomial for primitive $n^{\rm th}$-roots of unity. They are thus irreducible over $\mathbb{Q}$. Properties of their homogenized versions, $\varphi_n:=\varphi_n(q,t)$, are obtained by a direct translation of those for the $q$-version. Leaving aside the case for $n=1$, we have 
\begin{align}\label{def_cyclo}
	\qtint{n}= \prod_{d | n,\ d\neq 1} \varphi_d(q,t).
\end{align}
Equivalently,
\begin{align*} 
	\varphi_n(q,t)  =\prod_{\newatop{d | n}{d\not=n}} ([d])^{\mu(n/d)},
\end{align*}
with $\mu(n)$ standing for the \unedefinition{M\"obius function} on integers.
From this, we get the $(q,t)$-symmetric polynomials:
\begin{align*}
\varphi_2(q,t)&=q + t,\\
\varphi_3(q,t)&=q^{2} + qt + t^2,&\varphi_4(q,t)&=q^{2} + t^2,\\
\varphi_5(q,t)&=q^{4} + q^{3}t + q^{2} t^2+ q t^3+ t^4,&\varphi_6(q,t)&=q^{2} - qt + t^2,\\
\varphi_7(q,t)&=q^{6} + q^{5} t+ q^{4}t^2 + q^{3}t^3 + q^{2}t^4 + q t^5+ t^6,&\varphi_8(q,t)&=q^{4} + 1,\\
\varphi_9(q,t)&=q^{6} + q^{3}t^3 + t^6,& \varphi_{10}(q,t)&=q^{4} - q^{3}t + q^{2}t^2 - qt^3 + t^4.
\end{align*} 
 As done before, we often omit mention of the variables, especially when we aim to underline that the expressions may be considered as abstract symmetric functions, and relations hold from the point of view of symmetric functions.
Small examples of $e$-expansion of the $(q,t)$-cyclotomic polynomials are:
\begin{small}
 \begin{align*}
&\varphi_{2} = e_{1}, &&
\varphi_{3} = e_{1}^{2} - e_{2},\\
&\varphi_{4} = e_{1}^{2} - 2 e_{2},&&
\varphi_{5} = e_{1}^{4} - 3 e_{2} e_{1}^{2} + e_{2}^{2},\\
&\varphi_{6} = e_{1}^{2} - 3 e_{2},&&
\varphi_{7} = e_{1}^{6} - 5 e_{2} e_{1}^{4} + 6 e_{2}^{2} e_{1}^{2} - e_{2}^{3},\\
&\varphi_{8} = e_{1}^{4} - 4 e_{2} e_{1}^{2} + 2 e_{2}^{2},&&
\varphi_{9} = e_{1}^{6} - 6 e_{2} e_{1}^{4} + 9 e_{2}^{2} e_{1}^{2} - e_{2}^{3},\\
&\varphi_{10} = e_{1}^{4} - 5 e_{2} e_{1}^{2} + 5 e_{2}^{2},&&
\varphi_{11} = e_{1}^{10} - 9 e_{2} e_{1}^{8} + 28 e_{2}^{2} e_{1}^{6} - 35 e_{2}^{3} e_{1}^{4} + 15 e_{2}^{4} e_{1}^{2} - e_{2}^{5},\\
&\varphi_{12} = e_{1}^{4} - 4 e_{2} e_{1}^{2} + e_{2}^{2},&&
\varphi_{13} = e_{1}^{12} - 11 e_{2} e_{1}^{10} + 45 e_{2}^{2} e_{1}^{8} - 84 e_{2}^{3} e_{1}^{6} + 70 e_{2}^{4} e_{1}^{4} - 21 e_{2}^{5} e_{1}^{2} + e_{2}^{6},\\
&\varphi_{14} = e_{1}^{6} - 7 e_{2} e_{1}^{4} + 14 e_{2}^{2} e_{1}^{2} - 7 e_{2}^{3},&&
\varphi_{15} = e_{1}^{8} - 9 e_{2} e_{1}^{6} + 26 e_{2}^{2} e_{1}^{4} - 24 e_{2}^{3} e_{1}^{2} + e_{2}^{4}.
 \end{align*}
 \end{small}\noindent
 One observes that all of these are $\gamma$-anti-positive.
This is a general fact that we will show in \autoref{section_lucas}. Since this property is closed under products, we conclude that we have $\gamma$-anti-positivity for all of the following (see \cite{Warnaar2010}):
 \begin{align}
	\efact{n} & = \prod_{d=2}^n \varphi_{d}^{\lfloor n/d\rfloor},\label{fact_cyclo_factorization}\\
	\ebinom{n}{k} &= \prod_{d=2}^n \varphi_{d}^{\lfloor n/d\rfloor -\lfloor k/d\rfloor-\lfloor (n-k)/d\rfloor}.\label{binom_cyclo_factorization}\\
	\frac{1}{[a+b]} \ebinom{a+b}{a} &= \prod_{d=2}^n \varphi_{d}^{\ \lfloor (a+b)/d\rfloor -\lfloor a/d\rfloor-\lfloor b/d\rfloor-\scharac(d\,|\,(a+b))},\label{formule_qtcat_cyclo}
\end{align}
where $\charac(P)$ is equal to $1$, if $P$ is true, and $0$ otherwise. Also, assuming that $n$ factorizes as $2^a\,k$ with $k$ odd, we have:
\begin{align}
	p_n&= \prod_{d\,|\, k} \varphi_{(2^{a+1}d)}.\label{formule_power_sum}
\end{align}
From \autoref{qt_Partition_distinct} it follows that:
\begin{align}
     p_1 p_2 \cdots p_n 
     	&=\prod_{0\leq 2d+1\leq n} \frac{\eint{2n-2d}} {\eint{2d+1}}\nonumber\\
     	&=   \prod_{1\leq d\leq n} \varphi_{(2\,d)} ^{\lfloor(n+d)/2d\rfloor}.\label{formule_partitions_distinct_parts}
 \end{align}
We also have $\gamma$-anti-positivity of:
\begin{align}\label{schur_plet_h_phi}
	 \prod_{(i,j)\in \mu} \frac{\eint{N+i-j}}{\eint{\varepsilon_\mu(i,j)}}.
\end{align}
\begin{table}
\begin{tabular}{|c||l|c|}
\hline
Group $W_n$& degrees $d_i$, $1\leq i\leq n$& Coxeter number \\
\hline
\hline
$A_n$ & $2, 3, 4, \ldots, n+1$ & $n+1$ \\
$B_n/C_n$ & $2, 4, 6,\ldots, 2n$ & $2n$ \\
$D_n$ & $2, 4, 6, \ldots, 2(n-1),n$ & $2(n-1)$ \\
$E_6$ & $2, 5, 6, 8, 9, 12$ & $12$ \\
$E_7$ & $2, 6, 8, 10, 12, 14, 18$ & $18$ \\
$E_8$ & $2, 8, 12, 14, 18, 20, 24, 30$ & $30$ \\
$F_4$ & $2, 6, 8, 12$ & $12$ \\
$G_2$ & $2, 6$ & $6$ \\
\hline
\end{tabular}
\caption{Crystallographic Coxeter groups}\label{tab_coxeter}
\end{table}
Furthermore, for each crystallographic Coxeter groups $W$   (listed in \autoref{tab_coxeter}), we have the $W$-Catalan:
\begin{align}\label{formule_WCat}
		\Cat_\WCoxeter(a)&:=\prod_{i=1}^n \frac{[e_i+a]}{[e_i+1]},
\end{align}
where $e_1<e_2<\ldots <e_n$ are the exponents of $W$, and $a$ is relatively prime to the Coxeter number of $W$.

 \subsection{Classical properties of cyclotomic polynomials}\label{sec_classical_prop_cyclo}
Exploiting plethysm, we have the following properties.
   \begin{tcolorbox}[colback=azure!5!white]
  \begin{lemma}\label{lemma_prop_cyclo}
When $a$ is a prime that divides $n$,  we have
\begin{align}
	p_{a}\circ \varphi_{n}=\varphi_{an}
\end{align}
and when it does not, we have
\begin{align}
	p_{a}\circ \varphi_{n} = \varphi_{n}\, \varphi_{an} 
\end{align} 
It follows that $p_{\mu}\circ(-)$ sends a product of cyclotomic to a product of cyclotomic, for any partition $\mu$.
\end{lemma}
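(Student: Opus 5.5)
The key observation is that in two variables the plethysm $p_a\circ(-)$ is the ring endomorphism of $\Lambda_2$ given by $f(q,t)\mapsto f(q^a,t^a)$. This follows from the calculation rules above: $p_a\circ e_1=p_a=q^a+t^a$ and $p_a\circ e_2=e_2^a=q^at^a$, and the map is multiplicative and additive. So the lemma reduces to the classical identities for $\varphi_n(q^a)$, transported by homogenization. I would prove them directly in the $(q,t)$ setting, using the factorization \eqref{def_cyclo} together with \eqref{formule_produit_qt_int}, namely $p_a\circ[m]=[am]/[a]$.

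The first step is to write $\varphi_n$ by Möbius inversion of \eqref{def_cyclo}. To handle the sign conventions uniformly, I would work with $\prod_{d\mid n}[d]^{\mu(n/d)}$ and treat the factor $[1]=1$ as harmless, for $n\geq 2$. Since $p_a\circ(-)$ is a ring map, it extends to the fraction field, and we get
$$p_a\circ\varphi_n=\prod_{d\mid n}\Big(\frac{[ad]}{[a]}\Big)^{\mu(n/d)}=\prod_{d\mid n}[ad]^{\mu(n/d)},$$
because $\sum_{d\mid n}\mu(n/d)=0$ for $n>1$. The task is then to compare this with $\varphi_{an}=\prod_{e\mid an}[e]^{\mu(an/e)}$.

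Next I split into the two cases.

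If the prime $a$ divides $n$, the divisors $e$ of $an$ not of the form $ad$ with $d\mid n$ are those with $a\nmid e$. For such $e$, the quotient $an/e$ is divisible by $a^2$, so $\mu(an/e)=0$. For $e=ad$ we have $\mu(an/e)=\mu(n/d)$. Hence $\varphi_{an}=\prod_{d\mid n}[ad]^{\mu(n/d)}=p_a\circ\varphi_n$.

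If $a\nmid n$, the divisors of $an$ are exactly the $d$ and the $ad$ with $d\mid n$, and these two sets are disjoint. We have $\mu(an/(ad))=\mu(n/d)$ and $\mu(an/d)=-\mu(n/d)$. Therefore $\varphi_{an}=\prod_{d\mid n}[ad]^{\mu(n/d)}\big/\varphi_n$, which rearranges to $p_a\circ\varphi_n=\varphi_n\varphi_{an}$.

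For the final claim, any $p_\mu\circ(-)$ is a composite of the maps $p_{\mu_i}\circ(-)$ on $\Lambda_2$, because $p_k\circ p_l=p_{kl}$ and $p_\mu=\prod_i p_{\mu_i}$. For a composite index $k$, I would factor $p_k$ as $p_{a_1}\circ\cdots\circ p_{a_r}$ over its primes. Each prime step sends a cyclotomic polynomial to a product of one or two cyclotomics, and it is multiplicative. So by induction every such map sends products of cyclotomics to products of cyclotomics.

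The step I expect to be the main obstacle is the bookkeeping in the edge cases. One issue is $n=1$, where $\varphi_1$ was left aside and $[1]=1$ stands in for it, so $p_a\circ\varphi_1$ should be read as $\varphi_a$. Another is justifying the use of rational expressions, which is fine since the identities hold in the fraction field of the domain $\QQ[q,t]$. Finally, the paper's formula for $\varphi_n$ excludes $d=n$ (likely a typo), and I would state the correct Möbius-inversion version explicitly before using it.
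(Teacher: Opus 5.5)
Your proof is correct and follows the same route as the paper, which gives no argument beyond ``exploiting plethysm'': $p_a\circ(-)$ acts on $\Lambda_2$ as the substitution $(q,t)\mapsto(q^a,t^a)$, so the identities are the homogenized classical facts about $\varphi_n(q^a)$. Your M\"obius-inversion computation with $p_a\circ[m]=[am]/[a]$ verifies those facts directly, and you are right that the displayed inversion formula for $\varphi_n$ should not exclude $d=n$.

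There are two small slips, neither of which affects the argument. First, $p_\mu\circ f=\prod_i (p_{\mu_i}\circ f)$ is a product of the images, not a composite of the maps $p_{\mu_i}\circ(-)$. Composition only enters when you factor $p_k=p_{a_1}\circ\cdots\circ p_{a_r}$. Second, for $n=1$ the convention $\varphi_1=[1]=1$ gives $p_a\circ 1=1\neq\varphi_a$, so that case should simply be excluded, as the paper does. If you instead take $\varphi_1=q-t\notin\Lambda_2$, you get $q^a-t^a=\varphi_1\varphi_a$.
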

\end{tcolorbox}
For a prime $a$, we have the formulas
\begin{align}
	p_{(a^k)}\circ \varphi_{n} = \begin{cases}
        \varphi_{(a^k n)}  & \text{if $a$ divides $n$}, \\[4pt]
       \prod_{j=0}^k \varphi_{(a^jn)}  & \text{if $a$ does not divide $n$}.
\end{cases}
\end{align}
Thus, if $c_i$ is the multiplicity of the part $i$ in $\mu$, we get
\begin{align}
	p_{\mu}\circ \varphi_{n} = \prod_{(i,n)\neq 1} \varphi_{(i^{c_i} n)} \prod_{(i,n)= 1} \prod_{j=0}^i \varphi_{(i^jn)}.
\end{align}
If $a$ is an odd prime, we also have 
 \begin{align}
	e_1 \varphi_{2a}=\eint{a+1}-e_2\,\eint{a-1}.
\end{align}
Using \autorefname{Lemma}{lemma_e_int}, it follows that (see \cite[Lem. 5.4]{sagan2020lucas-17f})
\begin{align}
 \varphi_{2a} =  \sum_{j} \left[\binom{a-j}{j}  -\binom{a-1-j}{j-1}\right]e_1^{a-2j-1}(-e_2)^j.
\end{align}

\section{Cyclotomic generating functions}\label{sec_CGF}
Directly adapting to our context the notions developed in \cite{Billey2024} and  \cite{Gatzweiler}, we consider the graded monoid $\Phi=(\Phi_d)_d$ of polynomials in $\Lambda_{2}$ (graded by degree) that may be expressed as a product:
\begin{align}
    f =  (-e_2)^b\varphi_{c_1} \cdots   \varphi_{c_k}.
\end{align}
The mostly ``technical'' reason for the signed $(-e_2)$ will become clear in the sequel.
Observe that \autorefname{Lemma}{lem_lucas_atom_gamma_positive} implies that  $\Phi_d\subseteq {\Gamma_d}$, and that all of its elements are $\gamma$-anti-positive.
\begin{table}
\begin{small}
 \renewcommand{\arraystretch}{1.2}
\begin{tabular}{|c||ll|}
\hline
Degree &\qquad $\varphi_n$ \quad with \quad $\deg(\varphi_n)=d$ &\\
\hline
\hline
$d=2$ & $\varphi_{3} = e_{1}^{2} - e_{2}$, 
	& $\varphi_{4} = e_{1}^{2} - 2 e_{2}$, \\
	& $\varphi_{6} = e_{1}^{2} - 3 e_{2}$, &\\
\hline
$d=4$ & $\varphi_{5} = e_{1}^{4} - 3 e_{2} e_{1}^{2} + e_{2}^{2}$, 
	& $\varphi_{8} = e_{1}^{4} - 4 e_{2} e_{1}^{2} + 2 e_{2}^{2}$,\\ 
	& $\varphi_{10} = e_{1}^{4} - 5 e_{2} e_{1}^{2} + 5 e_{2}^{2}$, 
	& $\varphi_{12} = e_{1}^{4} - 4 e_{2} e_{1}^{2} + e_{2}^{2}$, \\
\hline
$d=6$ & $\varphi_{7} = e_{1}^{6} - 5 e_{2} e_{1}^{4} + 6 e_{2}^{2} e_{1}^{2} - e_{2}^{3}$, 
	& $\varphi_{9} = e_{1}^{6} - 6 e_{2} e_{1}^{4} + 9 e_{2}^{2} e_{1}^{2} - e_{2}^{3}$,\\
	& $\varphi_{14} = e_{1}^{6} - 7 e_{2} e_{1}^{4} + 14 e_{2}^{2} e_{1}^{2} - 7 e_{2}^{3}$, 
	& $\varphi_{18} = e_{1}^{6} - 6 e_{2} e_{1}^{4} + 9 e_{2}^{2} e_{1}^{2} - 3 e_{2}^{3}$, \\
\hline
$d=8$ & $\varphi_{15} = e_{1}^{8} - 9 e_{2} e_{1}^{6} + 26 e_{2}^{2} e_{1}^{4} - 24 e_{2}^{3} e_{1}^{2} + e_{2}^{4}$,&\\ 
	& $\varphi_{16} = e_{1}^{8} - 8 e_{2} e_{1}^{6} + 20 e_{2}^{2} e_{1}^{4} - 16 e_{2}^{3} e_{1}^{2} + 2 e_{2}^{4}$,&\\ 
	& $\varphi_{20} = e_{1}^{8} - 8 e_{2} e_{1}^{6} + 19 e_{2}^{2} e_{1}^{4} - 12 e_{2}^{3} e_{1}^{2} + e_{2}^{4}$,&\\ 
	& $\varphi_{24} = e_{1}^{8} - 8 e_{2} e_{1}^{6} + 20 e_{2}^{2} e_{1}^{4} - 16 e_{2}^{3} e_{1}^{2} + e_{2}^{4}$,&\\ 
	& $\varphi_{30} = e_{1}^{8} - 7 e_{2} e_{1}^{6} + 14 e_{2}^{2} e_{1}^{4} - 8 e_{2}^{3} e_{1}^{2} + e_{2}^{4}$.
	 & \\
\hline
\end{tabular}
\end{small}
 \caption{$e$-Expansion for cyclotomic polynomials}\label{table_cyclotomic_e}
\end{table}
However, not all elements of $\Phi$ are $(q,t)$-positive\footnote{Besides the obvious $-e_2$.}:
\begin{align*}
&\varphi_{10} = q^{4} - q^{3} t + q^{2} t^{2} - q t^{3} + t^{4},&& 
\varphi_{6}^{2} = q^{4} - 2 q^{3} t + 3 q^{2} t^{2} - 2 q t^{3} + t^{4},\\
&\varphi_{4} \varphi_{6} = q^{4} - q^{3} t + 2 q^{2} t^{2} - q t^{3} + t^{4},&&
\varphi_{12} = q^{4} - q^{2} t^{2} + t^{4},
\end{align*}
leading us to consider the set $\Phi^+:=\Phi\cap \N[q,t]$. Clearly, $\Phi^+$ is closed under multiplication, and forms a graded submonoid of $\Gamma$, with $\Phi_n^+\subseteq {\Gamma}_n$ denoting its degree-$n$ homogeneous component. In particular, for any partitions $\lambda$ of $n$, we have
 $$h_\lambda:=\prod_{k\in \lambda} [k+1] \in \Phi^+_n.$$
A complete list of small degree elements of $\Phi^+$ is given in \autoref{table_CGF}. Many expressions considered in \autoref{section_cyclotomic_polynomials} lie in $\Phi^+$. 
\begin{table}
  \centering 
  \begin{small}
 \renewcommand{\arraystretch}{1.2}
\begin{tabular}{|c|llll|}
\hline
         & $\varphi_{3}$ 
         & $ = \eint{3} $ 
         & $ = e_{1}^{2} - e_{2}$ 
         &  $ =\bleu{s_{2}} $\\ 
$n=2$	
	&$\varphi_{2}^{2} $  
	& $ = \eint{2}^{2} $ 
	& $ = e_{1}^{2} $ 
	&  $ = \bleu{ s_{2} + s_{11}}$ \\
 	&$\varphi_{4}$ 
	& $ = {\eint{4}}/{\eint{2}}$ 
	& $ = e_{1}^{2} - 2 e_{2}$ 
	& $ =  s_{2}-s_{11} $\\ 
\hline
 	& $\varphi_{2} \varphi_{4}$ & $ = \eint{4}$ & $ = e_{1}^{3} - 2 e_{2} e_{1}$ &  $ = \bleu{ s_{3}}$\\
$n=3$	& $\varphi_{2} \varphi_{3}$ & $ = \eint{2} \eint{3}$ & $ = e_{1}^{3} - e_{2} e_{1}$ & $=\bleu{ s_{3}+ s_{21}} $\\
	&  $\varphi_{2}^{3}$ & $ = \eint{2}^{3}$ & $ = e_{1}^{3}$ & $ =  \bleu{ s_{3}+2 s_{21}} $ \\
	&$\varphi_{2} \varphi_{6}$ & $ = {\eint{6}}/{\eint{3}}$ & $ = e_{1}^{3} - 3 e_{2} e_{1}$ & $ =  s_{3}-s_{21} $\\
\hline
	&$e_2^2$ & $ = e_2^2$ & $ = e_{2}^{2}$ & $ =  \bleu{ s_{22}}$\\
	  &$\varphi_{5}$ & $ = \eint{5}$ & $ = e_{1}^{4} - 3 e_{2} e_{1}^{2} + e_{2}^{2}$ & $ =  \bleu{ s_{4}}$\\
	  &$\varphi_{2}^{2} \varphi_{4}$ & $ = \eint{4}\eint{2} $ & $ = e_{1}^{4} - 2 e_{2} e_{1}^{2}$ & $ =   \bleu{  s_{4}+s_{31}}$\\  
	  & $\varphi_{3}^{2}$ & $ = \eint{3}^{2}$ & $ = e_{1}^{4} - 2 e_{2} e_{1}^{2} + e_{2}^{2}$ & $ =  \bleu{ s_{4} + s_{31} + s_{22}}$\\ 
	  & $\varphi_{2}^{2} \varphi_{3}$ & $ =  \eint{3} \eint{2}^{2} $ & $= e_{1}^{4} - e_{2} e_{1}^{2}$ & $ =  \bleu{ s_{4} + 2 s_{31} + s_{22}}$ \\ 
$n=4$	  & $\varphi_{2}^{4}$ & $ = \eint{2}^{4}$ & $ = e_{1}^{4}$ & $ =   \bleu{ s_{4}+ 3 s_{31} + 2 s_{22} } $ \\
	 & $\varphi_{3} \varphi_{4}$ & $ = {\eint{3} \eint{4}}/{\eint{2}}$ & $ = e_{1}^{4} - 3 e_{2} e_{1}^{2} + 2 e_{2}^{2}$ & $ =  \rouge{ s_{4} + s_{22}}$\\
 	& $\varphi_{2}^{2} \varphi_{6}$ & $ = {\eint{2} \eint{6}}/{\eint{3}}$ & $ = e_{1}^{4} - 3 e_{2} e_{1}^{2}$ & $ =  s_{4}-s_{22} $\\  
	 & $\varphi_{8}$ & $ = {\eint{8}}/{\eint{4}}$ & $ = e_{1}^{4} - 4 e_{2} e_{1}^{2} + 2 e_{2}^{2}$ & $ =  s_{4}-s_{31} $\\
	 & $\varphi_{3} \varphi_{6}$ & $ = {\eint{6}}/{\eint{2}}$ & $ = e_{1}^{4} - 4 e_{2} e_{1}^{2} + 3 e_{2}^{2}$ & $ = s_{4} - s_{31} + s_{22}$\\
	 & $\varphi_{4}^{2}$ & $ = {\eint{4}^{2}}/{\eint{2}^{2}}$ & $ = e_{1}^{4} - 4 e_{2} e_{1}^{2} + 4 e_{2}^{2}$ & $ = s_{4} - s_{31} + 2 s_{22}$\\ 
\hline
\end{tabular}
\end{small}
  \caption{All elements of $\Phi_n^+$, with $2\leq n\leq 4$, with those that are log-concave coloured in blue, red if unimodal but not log concave, and black if not unimodal. }\label{table_CGF}
\end{table}
As discussed in \cite{Billey2024}, any homogeneous polynomial $f\in \Lambda_2^{(n)}$ that can be expressed as a quotient
\begin{align}\label{quotient_in_Phi}
    f_{\alpha\beta} = \prod_{i=1}^m \frac{\eint{a_i}}{\eint{b_i}}, 
\end{align}
for multisets $\alpha=\{a_i\}_{1\leq i\leq m}$ and $\beta=\{b_i\}_{1\leq i\leq m}$ of positive integers,
necessarily lies in $\Phi_n$. In fact, the $\varphi$-expansion is then:
\begin{align}
    f_{\alpha\beta} = \prod_{d} \varphi_d^{n_{\alpha\beta}(d)}, 
\end{align}
where $n_{\alpha\beta} (d)= \#\{ a_i\ |\ d\ \text{divides}\ a_i,\ 1\leq i\leq m\} - \#\{ b_i\ |\ d\ \text{divides}\ b_i,\ 1\leq i\leq m\}$. For sure, if $f_{\alpha\beta}$ is known to enumerate combinatorial objects (hence lies in $\N[q,t]$), it will belong to $\Phi^+_n$. In particular, from \autorefformula{formule_produit_qt_int}, we conclue that
\begin{align}
    p_k\circ [n] = \frac{[nk]}{[k]},
\end{align}
lies in $\Phi^+$.

For the poset structure on $\Phi^+_n$, obtained by setting $f\geq g$ if and only if $f-g \in \N[q,t]$, we seem to have
	$$  f \leq  \varphi_{2}^n = (q+t)^n, $$
for any $f$ in $\Phi^+_n$. In other words, in all $f\in\Phi^+_n$, the coefficient of $q^at^b$ is bounded above by the binomial coefficient $\binom{a+b}{a}$. Observe that this property is closed under products, as we clearly have $fg \leq  \varphi_{2}^{n+k}$ whenever $f \leq  \varphi_{2}^n$ and $g \leq  \varphi_{2}^k$.
For $n=4$, the bound may be seen to hold for all the $(q,t)$-expansions:
\begin{align*}
&e_{2}^{2} = q^{2} t^{2},
&&\varphi_{8} = q^{4} + t^{4},\\
&\varphi_{3} \varphi_{6} = q^{4} + q^{2} t^{2} + t^{4},
&&\varphi_{4}^{2} = q^{4} + 2 q^{2} t^{2} + t^{4},\\
&\varphi_{2}^{2} \varphi_{6} = q^{4} + q^{3} t + q t^{3} + t^{4},
&&\varphi_{5} = q^{4} + q^{3} t + q^{2} t^{2} + q t^{3} + t^{4},\\
&\varphi_{3} \varphi_{4} = q^{4} + q^{3} t + 2 q^{2} t^{2} + q t^{3} + t^{4},
&&\varphi_{2}^{2} \varphi_{4} = q^{4} + 2 q^{3} t + 2 q^{2} t^{2} + 2 q t^{3} + t^{4},\\
&\varphi_{3}^{2} = q^{4} + 2 q^{3} t + 3 q^{2} t^{2} + 2 q t^{3} + t^{4},
&&\varphi_{2}^{2} \varphi_{3} = q^{4} + 3 q^{3} t + 4 q^{2} t^{2} + 3 q t^{3} + t^{4},\\
&\varphi_{2}^{4} = q^{4} + 4 q^{3} t + 6 q^{2} t^{2} + 4 q t^{3} + t^{4};
\end{align*}
and has been checked for all $n\leq 20$.

\subsection{List of examples}
Among many classical instances, here is a partial list of enumeration formulas belonging to $\Phi^+$ (mostly extracted from \cite{Billey2024}): 
\begin{itemize}
\item {\it Catalan and Narayana like numbers and their analogues for Coxeter groups}.
\item  {\it Number of alternating sign matrices, or plane partitions}, since we have \autoref{qt_ASM}. These are not always Schur-positive.
\item  {\it Plane partitions fitting in a cube $a\times b\times c$}, in view of \autoref{qt_PPP_abc}. In terms of the $\varphi_n$'s, we have the expression:
\begin{align}
      \mathrm{PP}_{abc}  = \prod_{d=1}^{a+b+c-1} \varphi_d^{e_d},
 \end{align} 
 where (see Tufel) 
 		$$ e_d(a,b,c) = \sum_{i=0}^{a-1} \lfloor {(b+c+i)}/{d}\rfloor - \lfloor {(b+i)}/{d}\rfloor - \lfloor {(c+i)}/{d}\rfloor + \lfloor {i}/{d}\rfloor.$$
%
\item {\it Hook length formula, $\SYT_\mu$}.
\item {\it Length generating function of Weyl groups}.
\item {\it Hilbert series of the quotient of polynomial rings by HSOP}.
\item {\it Gaussian Posets $h$-polynomial}, for which e have the expression
 $$\prod_{i=1}^r \frac{\eint{a_i+m}}{\eint{a_i}}.$$
  See \cite[Page 332]{Proctor1984}.
 \end{itemize}
Moreover, a direct translation of \cite[Conj. 1.3]{Gatzweiler} states that
\begin{align}\label{question_fract_m_ai}
   \prod_{i=1}^n \left({[m+i]}/{[i]}\right)^{a_i} \in \Phi^+
\end{align}
when the expression considered is polynomial. Likewise, \cite[Conj. 1.2]{Gatzweiler} states that
 \begin{align}\label{question_fract_binom}
   {\textstyle{\ebinom{n}{k}}/{\ebinom{n}{\ell}}}\in  \Phi^+,
\end{align}
also when this quotient is a polynomial.

 \section{Lucas analogs}\label{section_lucas} 
Exploiting the isomorphism $\Anti{(-)}$, we may systematically transform previous formulas and identities into their $(e_1,e_2)$-\unedefinition{Lucas analogs}. Very similar expressions are considered\footnote{See for the history of the relevant notions.} in \cite{sagan2020lucas-17f}, with $s$ and $t$ (not our $t$)  in lieu of $e_1$ and $e_2$. This makes expressions non-homogenous at first glance, except if one declares that $t$ is of degree $2$. 
Our bivariate version of  \unedefinition{Lucas ``numbers''},  \unedefinition{Lucas factorials},  \unedefinition{Lucas binomials},  etc.
\begin{align}
	\lint{n}:=\Anti{\eint{n}},\qquad \lfact{n}:=\Anti{\efact{n}},\qquad \lbinom{n}{k}:=\Anti{\ebinom{n}{k}} , \qquad \text{etc.} 
\end{align}
makes the homogeneous aspect more prominent. As previously announced, relying on work of \cite{sagan2020lucas-17f}, we will show in  \autorefname{Proposition}{e_int_anti_positive} that all these analogs are  $\gamma$-positive as they are products of the $\gamma$-positive ``Lucas atom''  $\lvarphi{n}=\lvarphi{n}(q,t)$, which are themselves $\gamma$-positive. 

\subsection{The Lucas numbers}
From \autoref{e_int_anti_positive} we get that
   	\begin{align}
             \lint{n+1} =e_1\,\lint{n}+e_2\,\lint{n-1},\label{rec_e_int_anti}
   	\end{align}       
with initial conditions $\lint{0}=0$ and $\lint{1}=1$. Otherwise stated,
	\begin{align}
	   \lint{n+1} &= \sum_{k=0}^n \left(\sum_{j=0}^{n-k}\binom{k}{j}\binom{n-j}{k}\right) q^k t^{n-k}\\
	    	&= \sum_{i+2j=n} \binom{n-j}{j}e_1^{n-2j}e_2^j.
	\end{align}
Equivalently, we have the generating function
	\begin{align}
	    \sum_{n\geq 0} \{n+1\}\, z^n=\frac{1}{1-e_1\,z - e_2\, z^2},
	  \end{align}
so that
\begin{small}
\begin{align*}
&\lint{2} = e_{1},& 
&\lint{3} = e_{1}^{2} + e_{2},\\ 
&\lint{4} = e_{1}^{3} + 2 e_{2} e_{1},&
&\lint{5} = e_{1}^{4} + 3 e_{2} e_{1}^{2} + e_{2}^{2},\\ 
&\lint{6} = e_{1}^{5} + 4 e_{2} e_{1}^{3} + 3 e_{2}^{2} e_{1},&
&\lint{7} = e_{1}^{6} + 5 e_{2} e_{1}^{4} + 6 e_{2}^{2} e_{1}^{2} + e_{2}^{3},\\ 
&\lint{8} = e_{1}^{7} + 6 e_{2} e_{1}^{5} + 10 e_{2}^{2} e_{1}^{3} + 4 e_{2}^{3} e_{1},&
&\lint{9} = e_{1}^{8} + 7 e_{2} e_{1}^{6} + 15 e_{2}^{2} e_{1}^{4} + 10 e_{2}^{3} e_{1}^{2} + e_{2}^{4},\\ 
&\lint{10} = e_{1}^{9} + 8 e_{2} e_{1}^{7} + 21 e_{2}^{2} e_{1}^{5} + 20 e_{2}^{3} e_{1}^{3} + 5 e_{2}^{4} e_{1},& 
&\lint{11} = e_{1}^{10} + 9 e_{2} e_{1}^{8} + 28 e_{2}^{2} e_{1}^{6} + 35 e_{2}^{3} e_{1}^{4} + 15 e_{2}^{4} e_{1}^{2} + e_{2}^{5};
\end{align*}
\end{small}\noindent
as well as the matrix presentation (in view of \autorefformula{matrix_formula_int}):
\begin{align}\label{matrix_formula_anti_int}
      \begin{pmatrix} \lint{n+1}& \lint{n} \cdot e_2\\ \lint{n} & \lint{n-1}\cdot e_2\end{pmatrix} = \begin{pmatrix} e_1& e_2 \\ 1 & 0\end{pmatrix}^n,
\end{align}
As $e_1$ and $e_2$ are `independent'', we may use the specialization\footnote{Careful here. To conform with the notations of the paper cited, we use $t$ in a different way than previously.} $e_1=s$ and $e_2=t$ to obtain from $\lint{n}$ the Fibonacci polynomials of  \cite[section 3.1]{brittenham2016}. Thus many of the formulas herein correspond to those above. However, these become more ``understandable'' if one thinks that the $t$-variable is of degree $2$. This is clearly inherent in our approach. 

The classical \unedefinition{Fibonacci polynomials}, $F_n(x)=\lint{n}_{e_1=x,e_2=1}$. Another ``natural'' specialization corresponds to evaluating $e_1$ and $e_2$ at $q=t=1$, thus setting $e_1=2$ and $e_2=1$, so that we get the {Pell numbers}\footnote{That satisfy the recurrence $a_n=2 a_{n-1}+a_{n-2}$, with $a_0=0$ and $a_1=1$.} (see OEIS \href{https://oeis.org/A000129}{A000129}).
The evaluation of the odd cases $\lint{2k+1}_{e_1=2,e_1=1}$ corresponds to instances of Markov numbers. See \autoref{sec_Markov} for more on $(q,t)$-analogs of Markov numbers in general. Via other specializations, we recover several interesting notions, such as the  Fibonomials of \cite{BergeronCeballosKustner,Sagan2010} and related constructions.

\subsection{Typical Lucas-analogs}
Translating to our context  the recurrence of \cite[Prop. 3.1]{Bennett_2020}, we get the \unedefinition{Lucas-binomial} recurrence:
\begin{align}
    \lbinom{n}{k} = \lint{k+1}\lbinom{n-1}{k} + e_2\cdot \lint{n-k-1}\lbinom{n-1}{k-1}.
\end{align}
Moreover, in view of  \cite[Prop. 4.2]{Bennett_2020}, the \unedefinition{Lucas-Catalan} analogs of $\lcat_{n}$ are such that
\begin{align}
    \lcat_{n} =\lbinom{2n-1}{n-1} + e_2\cdot \lbinom{2n-1}{n-2}.
\end{align}
From \autorefformula{s_to_e} for the \unedefinition{Lucas-Schur}, we directly deduce 
    	\begin{align}\label{tilde_s_to_e} 
               \widetilde{s}_{ab} =\lint{a-b+1} \cdot (-e_2)^b  = (-1)^b\sum_{2j\leq a-b} \binom{a-b-j}{j}e_1^{a-b-2j}e_2^{b+j},
   	\end{align}   
or in matrix term:
    	\begin{align}\label{tilde_s_to_e_mat} 
  	\begin{pmatrix} \widetilde{s}_{(a+1,b)}& e_2\cdot \widetilde{s}_{ab}\\ 
     			    \widetilde{s}_{ab} & e_2\cdot  \widetilde{s}_{(a-1,b)}\end{pmatrix}
        =
             (-e_2)^b\cdot \begin{pmatrix} e_1& e_2 \\ 
     			    1 & 0\end{pmatrix}^{a-b+1}  
   	\end{align}   
For instance,
\begin{align*}
&-\widetilde{s}_{31} = 2 s_{22} + s_{31},
&&\widetilde{s}_{22} = s_{22},\\
&-\widetilde{s}_{41} = 4 s_{32} + s_{41},
&&\widetilde{s}_{32} = s_{32},\\
&-\widetilde{s}_{51} = 6 s_{33} + 6 s_{42} + s_{51},
&&\widetilde{s}_{42} = 2 s_{33} + s_{42},
&&-\widetilde{s}_{33} = s_{33},\\
&-\widetilde{s}_{61} = 16 s_{43} + 8 s_{52} + s_{61},
&&\widetilde{s}_{52} = 4 s_{43} + s_{52},
&&-\widetilde{s}_{43} = s_{43},\\
&-\widetilde{s}_{71} = 22 s_{44} + 30 s_{53} + 10 s_{62} + s_{71},
&&\widetilde{s}_{62} = 6 s_{44} + 6 s_{53} + s_{62},
&&-\widetilde{s}_{53} = 2 s_{44} + s_{53},
&&\widetilde{s}_{44} = s_{44}.
\end{align*}
The \unedefinition{Lucas-monomial} symmetric functions, are given by
\begin{align}\label{anti_monomial_to_e}
 	 \Anti{m}_{ab} = (-1)^b  \sum_{0\leq 2k \leq a-b}\frac{a-b}{a-b-k} \binom{a-b-k}{k} e_1^{a-b-2k}e_2^{k+b}.
\end{align}
The \unedefinition{Lucas-power-sum} $\Anti{p}_n$ may be obtained via a recurrence very similar to that for Lucas-numbers (\ref{rec_e_int_anti}), simply changing the initial conditions to $\Anti{p}_1=e_1$ and $\Anti{p}_2==e_1^2+2\,e_2$ (see \autorefname{Recurrence}{rec_e_power}), and then:
\begin{align}\label{rec_e_anti_power}
    \Anti{p}_n  = e_1\, \Anti{p}_{n-1} +e_2\,\Anti{p}_{n-2}.
\end{align}
Equivalently, we have the generating function
 \begin{align}\label{GF_e_anti_power}
    \sum_{n\geq 0} \Anti{p}_n x^n  =  \frac{e_1\, x-2e_2\,x^2}{1-e_1\,x-e_2\,x^2}
\end{align}
As illustrated by the following values, these polynomials are $e$-positive ({\sl aka} $\gamma$-positive):
\begin{align*}
\Anti{p}_2 &= e_{1}^{2} + 2 e_{2},\\
\Anti{p}_3 &= e_{1}^{3} + 3 e_{2} e_{1},\\
\Anti{p}_4 &= e_{1}^{4} + 4 e_{2} e_{1}^{2} + 2 e_{2}^{2},\\
\Anti{p}_5 &= e_{1}^{5} + 5 e_{2} e_{1}^{3} + 5 e_{2}^{2} e_{1},\\
\Anti{p}_6 &= e_{1}^{6} + 6 e_{2} e_{1}^{4} + 9 e_{2}^{2} e_{1}^{2} + 2 e_{2}^{3},\\
\Anti{p}_7 &= e_{1}^{7} + 7 e_{2} e_{1}^{5} + 14 e_{2}^{2} e_{1}^{3} + 7 e_{2}^{3} e_{1},\\
\Anti{p}_8 &= e_{1}^{8} + 8 e_{2} e_{1}^{6} + 20 e_{2}^{2} e_{1}^{4} + 16 e_{2}^{3} e_{1}^{2} + 2 e_{2}^{4}.
\end{align*}

Applying the isomorphism $\Anti{(-)}$ to \autorefname{Recurrences}{rec_e_stirling_first_kind} and \ref{rec_e_stirling_second_kind}, we clearly get recurrences for both kind of \unedefinition{Lucas-Stirling numbers}:
    \begin{align}
    	 \Anti{c}_{nk}&=\Anti{c}_{(n-1,k-1)}+\lint{n-1}\,\Anti{c}_{(n-1,k)},\qquad \text{and}\label{lucas_stirling_first_kind}\\	
  \Anti{S}_{nk}&=\Anti{S}_{(n-1,k-1)}+\lint{k}\, \Anti{S}_{(n-1,k)},\label{lucas_Stirling_second_kind}
 \end{align}
whose solutions are $\gamma$-positive.
For sure, we also have the \unedefinition{Lucas-Narayana} polynomials:
    \begin{align}
          \frac{1}{\lint{n}} \lbinom{n}{i} \lbinom{kn}{n-i-1},
    \end{align}
 as well as the \unedefinition{Lucas-$W$-Catalan} (see \autorefformula{formule_WCat}) for crystallographic Coxeter groups $W$:
\begin{align}
	\Anti{\Cat}_\WCoxeter(a):=\prod_{i=1}^n \frac{\lint{e_i+a}}{\lint{e_i+1}}.
\end{align}
Finally, we have tested that the following Lucas-analog of \autorefname{Conjecture}{conj_bergeron_vessenes} holds for all cases for which $ad=bc\leq 36$.
  \begin{tcolorbox}[colback=azure!5!white]
  \begin{conjecture}\label{conj_bergeron_vessenes_lucas}
    For all integers $a\leq b,c\leq d$ such that $ad=bc$, the polynomial
    \begin{align}\label{bergeron_vessenes_lucas}
         \lbinom{b+c}{b} -\lbinom{a+d}{a}
    \end{align} 
is Shur-positive, up to a global sign. 
  \end{conjecture}
    \end{tcolorbox}
  \noindent

\subsection{Lucas atoms}\label{subsection_lucas_atoms}
Let us translate in our setting the work of \cite{Alecci2025,Bennett_2020,sagan2020lucas-17f}.
From the analogous formula for cyclotomic polynomials, we obtain the following expression for the \unedefinition{Lucas atoms}:
\begin{align}\label{twisted_cyclo}
	\lvarphi{n}=\prod_{d | n, d\not=n} \{d\}^{\mu(n/d)},
\end{align}
which may be expanded as polynomials in $e_1$ and $e_2$.
Some small values are as follows:
\begin{small}
 \begin{align*}
&\lvarphi{2} = e_{1},&
&\lvarphi{3} = e_{1}^{2} + e_{2},\\
&\lvarphi{4} = e_{1}^{2} + 2 e_{2},&
&\lvarphi{5} = e_{1}^{4} + 3 e_{2} e_{1}^{2} + e_{2}^{2},\\
&\lvarphi{6} = e_{1}^{2} + 3 e_{2},&
&\lvarphi{7} = e_{1}^{6} + 5 e_{2} e_{1}^{4} + 6 e_{2}^{2} e_{1}^{2} + e_{2}^{3},\\
&\lvarphi{8} = e_{1}^{4} + 4 e_{2} e_{1}^{2} + 2 e_{2}^{2},&
&\lvarphi{9} = e_{1}^{6} + 6 e_{2} e_{1}^{4} + 9 e_{2}^{2} e_{1}^{2} + e_{2}^{3}\\
&\lvarphi{10} = e_{1}^{4} + 5 e_{2} e_{1}^{2} + 5 e_{2}^{2},&
&\lvarphi{11} = e_{1}^{10} + 9 e_{2} e_{1}^{8} + 28 e_{2}^{2} e_{1}^{6} + 35 e_{2}^{3} e_{1}^{4} + 15 e_{2}^{4} e_{1}^{2} + e_{2}^{5},\\
&\lvarphi{12} = e_{1}^{4} + 4 e_{2} e_{1}^{2} + e_{2}^{2},&
&\lvarphi{13} = e_{1}^{12} + 11 e_{2} e_{1}^{10} + 45 e_{2}^{2} e_{1}^{8} + 84 e_{2}^{3} e_{1}^{6} + 70 e_{2}^{4} e_{1}^{4} + 21 e_{2}^{5} e_{1}^{2} + e_{2}^{6},\\
&\lvarphi{14} = e_{1}^{6} + 7 e_{2} e_{1}^{4} + 14 e_{2}^{2} e_{1}^{2} + 7 e_{2}^{3},&&
\lvarphi{15} = e_{1}^{8} + 9 e_{2} e_{1}^{6} + 26 e_{2}^{2} e_{1}^{4} + 24 e_{2}^{3} e_{1}^{2} + e_{2}^{4}.
\end{align*}
\end{small}\noindent
Just as in \cite[Prop. 3]{Alecci2025} (see also \cite{Levy}), for all $n$ the polynomials $\lvarphi{n}$ are irreducible\footnote{This is not the case in $\QQ[q,t]$, as shown by $\lvarphi{7}(q,t)=(q^{3} + 5 q^{2} t + 6 q t^{2} + t^{3}) \cdot (q^{3} + 6 q^{2} t + 5 q t^{2} + t^{3})$. Observe that these factors are not symmetric in $q$ and $t$. Up to $n\leq 400$, the only other case which factorizes in this manner is $\lvarphi{30}(q,t)$.} in $\QQ[e_1,e_2]$. This is equivalent (via $\Anti{(-)}$) to the analogous irreducibility for $\varphi_{n}$. A direct adaptation of an argument of \cite{sagan2020lucas-17f}, given below, shows the following.  
   \begin{tcolorbox}[colback=azure!5!white]
\begin{proposition}\label{lem_lucas_atom_gamma_positive}
The Lucas atoms $\lvarphi{n}$  are  irreducible over $\QQ[e_1,e_2]$, $\gamma$-positive, and with negative real roots. Hence the polynomials $\varphi_{n}$ are $\gamma$-anti-positive. 
 \end{proposition}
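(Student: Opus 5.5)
The plan is to exploit the isomorphism $\Anti{(-)}$ to translate everything into the cyclotomic setting, where roots are explicit. Since $e_1=q+t$ and $e_2=qt$, write $\varphi_n(q,t)=\prod_{\zeta}(q-\zeta t)$ over primitive $n$-th roots of unity $\zeta$, for $n\geq 3$. Pair $\zeta$ with $\bar\zeta$ to get
\[
(q-\zeta t)(q-\bar\zeta t)=q^2-2\cos(2\pi k/n)\,qt+t^2=e_1^2-(2+2\cos(2\pi k/n))\,e_2 .
\]
Hence, for $n\geq 3$,
\[
\varphi_n=\prod_{\substack{1\leq k<n/2\\ \gcd(k,n)=1}}\bigl(e_1^2-c_k\,e_2\bigr),\qquad c_k=4\cos^2(\pi k/n)\in(0,4),
\]
with $c_k>0$ because $k\neq n/2$. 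The case $n=2$ is simply $\varphi_2=e_1$.

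Applying $\Anti{(-)}$, which is a ring map sending $e_2\mapsto -e_2$, we obtain
\[
\lvarphi{n}=\prod_k\bigl(e_1^2+c_k\,e_2\bigr).
\]
This needs a justification that $\lvarphi{n}$ defined by \eqref{twisted_cyclo} equals $\Anti{\varphi_n}$. That holds because $\Anti{(-)}$ is multiplicative on the Möbius product formula, and $\lint{d}=\Anti{\eint{d}}$ by definition. Each factor has positive real coefficients, so the product is a polynomial in $e_1^2$ and $e_2$ with positive real coefficients.

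For integrality, note that $\lvarphi{n}\in\Z[e_1,e_2]$, since $\varphi_n\in\Z[q,t]$ is symmetric, hence lies in $\Z[e_1,e_2]$. A polynomial with integer coefficients that are also positive reals has coefficients in $\N$, which gives $\gamma$-positivity. Equivalently, in the variable $x=e_1^2/e_2$ (or setting $e_2=1$), $\lvarphi{n}$ has the roots $x=-c_k$, which are negative real numbers. This is the real-rootedness statement: viewed as a polynomial in $e_1$ with $e_2=1$, its roots $\pm i\sqrt{c_k}$ lie on the imaginary axis, while in $x=e_1^2$ they are the negative reals $-c_k$. I will state it in the form matching the paper's convention, via Sagan–Tirrell's substitution.

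For irreducibility over $\QQ[e_1,e_2]$, suppose $\lvarphi{n}=AB$ nontrivially in $\QQ[e_1,e_2]$. Applying $\Anti{(-)}$ gives $\varphi_n=\Anti{A}\,\Anti{B}$ in $\QQ[e_1,e_2]\subset\QQ[q,t]$. Dehomogenizing at $t=1$ yields a factorization of $\varphi_n(q)$. Both factors are homogeneous of positive degree, and neither is divisible by $t$ because $\varphi_n(0,1)=\pm1\neq0$. So both factors remain nonconstant after dehomogenizing, contradicting the irreducibility of $\varphi_n(q)$ over $\QQ$.

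The final claim follows at once from Lemma \ref{lem_gamma_anti}: $\varphi_n=\Anti{\lvarphi{n}}$ with $\lvarphi{n}\in\Anti\Gamma$, so $\varphi_n\in\Gamma$.

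The main point needing care is the identification $\Anti{\varphi_n}=\lvarphi{n}$ together with the sign bookkeeping in the root computation, namely that $c_k>0$ strictly. Everything else is routine.
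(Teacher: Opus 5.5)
Your argument is essentially the paper's. You pair each primitive root $\zeta$ with $\bar\zeta$ to get a real factor $e_1^2-c\,e_2$ with $0<c<4$ (the paper's $c=2(a+1)$). You then apply $\Anti{(-)}$, and combine positivity of the real coefficients with integrality to land in $\N[e_1,e_2]$.

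Your irreducibility argument is a correct expansion of what the paper only asserts, by citation, just before the statement. It transports a factorization through $\Anti{(-)}$, uses that factors of a homogeneous polynomial are homogeneous, dehomogenizes at $t=1$, and invokes irreducibility of $\varphi_n(q)$. One small slip there: $\varphi_n(0,1)\neq 0$ rules out divisibility by $q$, not by $t$. However, $\varphi_n(1,0)=1$ (or the $q\leftrightarrow t$ symmetry) gives what you need.

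The one step you leave unfinished is the root statement. You show that the $\gamma$-polynomial has negative real roots ($x=-c_k$ for $x=e_1^2/e_2$), then defer the translation to the paper's convention. In the paper, ``negative real roots'' means roots in $q/t$. This is what is used afterwards to deduce log-concavity and unimodality of the $(q,t)$-coefficients of elements of $\Anti{\Phi}$. Your own remark that the roots in $e_1$ are $\pm i\sqrt{c_k}$ shows that the choice of variable matters, so this must be written out. The bridge is one line:
\[
e_1^2+c_k\,e_2=q^2+(2+c_k)\,qt+t^2
\]
This quadratic has discriminant $c_k(c_k+4)>0$, product of roots $1$, and sum of roots $-(2+c_k)<0$. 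So both roots in $q/t$ are real and negative, and $\lvarphi{2}=e_1$ has the root $-1$. This is exactly the paper's closing computation, since $2+c_k=2(a+2)$. With that line added, your proof is complete.
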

 \end{tcolorbox}
\begin{proof}[\bf Proof]
We argue as in \cite{sagan2020lucas-17f}. Recall that primitive $n^{\text{th}}$-roots of unity come in pairs $\omega=a+b\,i$  and $\overline{\omega}$, for which we have $\omega\cdot \overline{\omega} = a^2+b^2=1$ and $2a=\omega\cdot \overline{\omega}$, with $-1< a <1$. Consider
\begin{align*}      (q-\omega\,t)   (q-\overline{\omega}\,t) &=q^2 - 2 a\,qt + t^2\\
     					                      &=(q+t)^2 \rouge{-} (2a+2)\,qt\\
     					                      &=e_1^2 \rouge{-} 2\,(a+1)\,e_2.
\end{align*}
The map $\Anti{(-)}$ transforms this polynomial into  $e_1^2 + 2(a+1) e_2=(q+t)^2 \rouge{+} 2(a+1)\,qt $,
which is positive over $\mathbb{R}$ since $0< a+1<2$. 
Thus $\lvarphi{n}(q,t)$    lies in  $\mathbb{R}_+[e_1,e_2]$, as it is a product of terms of the above form. Moreover, since $\lvarphi{n}$ lies in $\mathbb{Z}[e_1,e_2]$, we conclude that  $\lvarphi{n}\in \N[e_1,e_2]$. As ``roots'' of $(q+t)^2 +2(a+1)\,qt =q^2+2(a+2)qt+t^2$ are $-(a+2)\pm \sqrt{(a+2)^2-1}$, with $(a+2)>1$,  they are real and negative.
\end{proof}
 From the above proof of \autoref{lem_lucas_atom_gamma_positive}, we deduce the following formula for $\widetilde{\varphi}_{n}$. First, observe that each (quadratic) irreducible real factor of $\widetilde{\varphi}_{n}$ takes the form
 		$$e_1^2 + 2(a+1) e_2 = e_1^2+ e_2 {(\omega +1)^2}/{\omega},$$
for an adequate choice of a primitive $n^{\text{th}}$-root of unity $\omega$ (say with positive imaginary part). Hence we get
  	\begin{align}
	    \widetilde{\varphi}_{n}  = \prod_{\newatop{(k,n)=1}{2k<n}} \left(e_1^2+ \frac{(\omega_1^k +1)^2}{\omega_1^k} e_2\right),
	\end{align}
 with $\omega_1$ standing for the ``smallest'' primitive $n^{\text{th}}$-root of unity $e^{2 i \pi/n}$.

 \subsection{Properties of Lucas atoms}
 The effect of the \unedefinition{anti-plethysm} $p_n\,\Anti{\circ}\, (-)$ on elements of $\Lambda_2$, is explicitly obtained via the calculation  rules:
\begin{align*}
      &p_n\,\Anti{\circ}\, e_1= \lint{n+1}+e_2\,\lint{n-1}, &&p_n\,\Anti{\circ}\, e_2= e_2^n,\\
     &p_n\,\Anti{\circ}\,(a\,f+b\,g) =  a\,(p_n\,\Anti{\circ}\, f)+b\,(p_n\,\Anti{\circ}\, g)  &&p_n\,\Anti{\circ}\,(f\cdot g) =  (p_n\,\Anti{\circ}\, f)\cdot (p_n\,\Anti{\circ}\, g), 
\end{align*}
for any $f$ and $g$ in $\Lambda_2$, and scalars $a$ and $b$. This operation is extended linearly and multiplicatively to any $h\in \Lambda$. 
In other terms, anti-plethysm is obtained from plethysm via conjugation by $\Anti{(-)}$.
In particular, we get (see \cite[Prop. 4.6]{Ratliff2004}) 
\begin{align}\label{formule_pleth_int}
   (p_k\,\Anti{\circ}\, \lint{n}) =    \frac{\lint{nk} }{\lint{k}}
\end{align}
Simply translating to this context \autorefname{Lemma}{lemma_prop_cyclo}, we get the following.
If $c_i$ is the multiplicity of the part $i$ in $\mu$, then
\begin{align}
	p_{\mu}\,\Anti{\circ}\,  \Anti{\varphi}_{n} = \prod_{(i,n)\neq 1} \Anti{\varphi}_{(i^{c_i} n)} \prod_{(i,n)= 1} \prod_{j=0}^i \Anti{\varphi}_{(i^jn)}.
\end{align}
If $a$ is an odd prime, we also have 
 \begin{align}
	e_1\, \Anti{\varphi}_{2a}=\lint{a+1}+e_2\,\lint{a-1}.
\end{align}
and (see \cite[Lem. 5.4]{sagan2020lucas-17f})
\begin{align}
 \Anti{\varphi}_{2a} =  \sum_{j} \left[\binom{a-j}{j} +\binom{a-1-j}{j-1}\right]e_1^{a-2j-1}(e_2)^j.
\end{align}


\section{Anti-cyclotomic (Lucas atoms) generating functions}
Once again exploiting the involution $\Anti{(-)}$, we get the graded \unedefinition{Anti-Cyclotomic Generating Function} submonoid $\Anti{\Phi}$ of $\Anti{\Gamma}$, whose elements are of the form
    $$e_2^b\, \Anti{\varphi}_{c_1} \Anti{\varphi}_{c_2} \cdots \Anti{\varphi}_{c_\ell},$$
   for any sequences of integers $c_i\geq 2$.
As we have seen that all the $\lvarphi{n}$ are $\gamma$-positive ($e$-positive), Schur-positive, and with negative real roots, it follows that all elements of $\Anti{\Phi}$ have these properties. Thus we get the following from \cite[Prop 6.1 and Thm. 6.2]{Bennett_2020}, in clearly in the spirit of \cite{Billey2024,Gatzweiler}.
   \begin{tcolorbox}[colback=azure!5!white]
  \begin{proposition}\label{Anti_cyclotomic_generating _functions}
All elements of the anti-cyclotomic generating function monoid $\Anti{\Phi}$ are $\gamma$-positive, with negative real roots. Hence, they are log-concave and unimodal. The generating series for the number of elements of its graded components $\Anti{\Phi}_n$ is:
  \begin{align}
  	\sum_{n\geq 0} \#\Anti{\Phi}_n\, x^n = (1+x) (f(x)+f(-x))/2, \qquad \text{with}\qquad  f(x)=\prod_{n\geq 1} \frac{1}{1-x^{\phi(n)}},
  \end{align}
 where $\phi(n)$ is Euler $\phi$ function.
 \end{proposition}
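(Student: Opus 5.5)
The plan is to get the positivity, real-rootedness and log-concavity statements from closure under products, and then to count $\Anti{\Phi}_n$ by unique factorization. For the first half, take a general element $e_2^b\,\Anti{\varphi}_{c_1}\cdots\Anti{\varphi}_{c_\ell}$ of $\Anti{\Phi}$. By \autoref{lem_lucas_atom_gamma_positive}, each $\Anti{\varphi}_{c_i}$ lies in $\N[e_1,e_2]$. The semiring $\N[e_1,e_2]=\Anti{\Gamma}$ is closed under products and contains $e_2$, so every element of $\Anti{\Phi}$ is $\gamma$-positive. Since $\gamma$-positivity implies Schur-positivity, each element is also Schur-positive, hence unimodal.

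For real-rootedness, view a homogeneous symmetric polynomial in $(q,t)$ through its dehomogenization at $t=1$. Its roots are the union of the roots of its factors. By the proof of \autoref{lem_lucas_atom_gamma_positive}, each $\Anti{\varphi}_n$ factors over $\mathbb{R}$ into terms $q^2+2(a+2)qt+t^2$ with $a+2>1$, all of whose roots are real and negative. The factor $e_2=qt$ contributes only the root $q=0$, which just shifts degrees. So the product has nonnegative coefficients and only real nonpositive roots. Newton's inequalities then give log-concavity of the coefficient sequence. The sequence has no internal zeros, since all roots are real and nonpositive and the leading and trailing coefficients of the part after removing $qt$ powers are nonzero. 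Log-concavity without internal zeros implies unimodality, which also recovers the previous conclusion.

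For the enumeration, I will show that distinct exponent data $(b;\{c_1,\dots,c_\ell\})$ give distinct elements of $\Anti{\Phi}$. By \autoref{lem_lucas_atom_gamma_positive}, the $\Anti{\varphi}_n$ are irreducible in $\QQ[e_1,e_2]$. They are pairwise non-associate: under $\Anti{(-)}$ they correspond to the distinct cyclotomic polynomials $\varphi_n$, which have disjoint root sets. Also $e_2$ is an irreducible that differs from all of them, because $\Anti{\varphi}_n$ has nonzero pure $e_1$-term $e_1^{\deg}$. Unique factorization in $\QQ[e_1,e_2]$ then identifies $\Anti{\Phi}$ with a free commutative monoid on the generators $e_2$ (degree $2$) and $\Anti{\varphi}_n$ for $n\ge 2$ (degree $\phi(n)$, read off from \autoref{twisted_cyclo} and $\deg\lint{d}=d-1$). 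Its Hilbert series is therefore
$$\frac{1}{1-x^2}\prod_{n\ge2}\frac{1}{1-x^{\phi(n)}}.$$

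The main obstacle is reconciling this product with the announced form $(1+x)(f(x)+f(-x))/2$. Write $E(x)=\prod_{n\ge3}(1-x^{\phi(n)})^{-1}$, which is an even series because $\phi(n)$ is even for $n\ge3$. Then $f(x)=E(x)/(1-x)^2$. A direct comparison of my series with the claimed one seems to disagree by a factor. So I will first check small degrees against \autoref{table_CGF} transported by $\Anti{(-)}$. I will then recheck the conventions in \cite[Thm. 6.2]{Bennett_2020}: whether $e_2$ is counted as a separate generator, whether $n=1$ and $n=2$ contribute, and how degree is assigned. Finally I will adjust either the generator set or the formula accordingly, rewriting the even/odd splitting of $f$ with the parity trick $\frac{1}{(1-x)^2}=\frac{(1+x)^2}{(1-x^2)^2}$.
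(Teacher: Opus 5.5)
Your positivity half is the paper's argument: closure of $\N[e_1,e_2]$ under products, plus the factorization from the proof of Proposition~\ref{lem_lucas_atom_gamma_positive}, with Newton's inequalities supplying the log-concavity the paper leaves implicit. Two small repairs are needed. First, $\Anti{\varphi}_2=e_1$ is linear with root $-1$; the quadratics $q^2+2(a+2)qt+t^2$ only arise for $n\geq 3$. Second, as you noticed, ``negative real roots'' can only mean ``real and nonpositive'' once $e_2$ divides the element (e.g.\ $e_2^2\in\Anti{\Phi}_4$).

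For the count, the paper gives no argument, only a citation of \cite[Thm.~6.2]{Bennett_2020}. Your free-monoid argument is therefore a genuinely self-contained route. Its Hilbert series $\frac{1}{1-x^2}\prod_{n\geq2}\frac{1}{1-x^{\phi(n)}}$ is correct. Distinctness does not even need irreducibility: apply $\Anti{(-)}$ and compare root multisets of $(-e_2)^b\varphi_{c_1}\cdots\varphi_{c_\ell}$ at $t=1$. Also take $\deg\Anti{\varphi}_n=\deg\varphi_n=\phi(n)$ directly, since \eqref{twisted_cyclo} as printed omits the factor $d=n$.

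Where you stall is not a gap in your reasoning but an error in the printed formula. Do not try to fix it by changing the generator set, which is forced by the definition of $\Anti{\Phi}$. Write $E(x)=\prod_{n\geq 3}(1-x^{\phi(n)})^{-1}$ (even, as you note), so that $f(x)=E(x)/(1-x)^2$. Then
\[
(1+x)\,\frac{f(x)+f(-x)}{2}=(1+x)\,E(x)\,\frac{1+x^2}{(1-x^2)^2}=(1+x^2)\cdot\frac{E(x)}{(1-x)(1-x^2)},
\]
which is exactly $1+x^2$ times your series. Concretely, the printed formula gives $1+x+6x^2+6x^3+24x^4+\cdots$. But $\Anti{\Phi}_2=\{e_2,\,e_1^2,\,\Anti{\varphi}_3,\,\Anti{\varphi}_4,\,\Anti{\varphi}_6\}$ has only $5$ elements. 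Your series reproduces the paper's own expansion $1+x+5x^2+5x^3+19x^4+19x^5+59x^6+\cdots$ and the $19$ entries of Table~\ref{table_CGF_anti}.

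The correct closed form is therefore
\[
\sum_{n\geq0}\#\Anti{\Phi}_n\,x^n=\frac{1+x}{1+x^2}\cdot\frac{f(x)+f(-x)}{2}=\frac{1}{(1-x)(1-x^2)}\prod_{n\geq3}\frac{1}{1-x^{\phi(n)}}.
\]
The printed shape $(1+x)\bigl(g(x)+g(-x)\bigr)/2$ does hold if $f$ is replaced by $g$, where the $n=1$ factor $(1-x)^{-1}$ becomes $(1-x^2)^{-1}$. This amounts to giving the extra generator $e_2$ its actual degree $2$ rather than $\phi(1)=1$. You should finish by stating this corrected identity rather than the announced one.
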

  \end{tcolorbox}
\noindent
Thus, we get (see \cite{StanleyCyclo})
     $$\sum_{n\geq 0} \#\Anti{\Phi}_n\, x^n = 1 +x+ 5x^{2} + 5x^{3}+ 19x^{4} + 19x^{5}+ 59x^{6} + 59x^{7} + 165x^{8} + 165x^{9} + \ldots$$
\begin{table}
  \centering 
  \begin{small}
 \renewcommand{\arraystretch}{1.2}
\begin{tabular}{|llll|llll|}
\hline
&$\lvarphi{2}^{4}$ & $=\lint{2}^{4}$ & $=e_{1}^{4}$&
&$e_{2}^{2}$ & $=e_{2}^{2}$ & $=e_{2}^{2}$\\
&$e_{2} \lvarphi{2}^{2}$ & $=e_{2}\, \lint{2}^{2}$ & $=e_{2} e_{1}^{2}$&
&$\lvarphi{2}^{2} \lvarphi{3}$ & $=\lint{2}^{2} \lint{3}$ & $=e_{1}^{4} + e_{2} e_{1}^{2}$\\
&$\lvarphi{2}^{2} \lvarphi{4}$ & $=\lint{2} \lint{4}$ & $=e_{1}^{4} + 2 e_{2} e_{1}^{2}$&
&$\lvarphi{2}^{2} \lvarphi{6}$ & $={\lint{2} \lint{6}}/{\lint{3}}$ & $=e_{1}^{4} + 3 e_{2} e_{1}^{2}$\\
&$e_{2} \lvarphi{3}$ & $=e_{2}\, \lint{3}$ & $=e_{2} e_{1}^{2} + e_{2}^{2}$&
&$e_{2} \lvarphi{4}$ & $={e_{2}\, \lint{4}}/{\lint{2}}$ & $=e_{2} e_{1}^{2} + 2 e_{2}^{2}$\\
&$e_{2} \lvarphi{6}$ & $={e_{2}\, \lint{6}}/{\lint{2} \lint{3}}$ & $=e_{2} e_{1}^{2} + 3 e_{2}^{2}$&
&$\lvarphi{3}^{2}$ & $=\lint{3}^{2}$ & $=e_{1}^{4} + 2 e_{2} e_{1}^{2} + e_{2}^{2}$\\
&$\lvarphi{5}$ & $=\lint{5}$ & $=e_{1}^{4} + 3 e_{2} e_{1}^{2} + e_{2}^{2}$&
&$\lvarphi{3} \lvarphi{4}$ & $={\lint{3} \lint{4}}/{\lint{2}}$ & $=e_{1}^{4} + 3 e_{2} e_{1}^{2} + 2 e_{2}^{2}$\\
&$\lvarphi{12}$ & $={\lint{2} \lint{12}}/{\lint{4} \lint{6}}$ & $=e_{1}^{4} + 4 e_{2} e_{1}^{2} + e_{2}^{2}$&
&$\lvarphi{8}$ & $={\lint{8}}/{\lint{4}}$ & $=e_{1}^{4} + 4 e_{2} e_{1}^{2} + 2 e_{2}^{2}$\\
&$\lvarphi{3} \lvarphi{6}$ & $={\lint{6}}/{\lint{2}}$ & $=e_{1}^{4} + 4 e_{2} e_{1}^{2} + 3 e_{2}^{2}$&
&$\lvarphi{4}^{2}$ & $={\lint{4}^{2}}/{\lint{2}^{2}}$ & $=e_{1}^{4} + 4 e_{2} e_{1}^{2} + 4 e_{2}^{2}$\\
&$\lvarphi{10}$ & $={\lint{10}}/{\lint{2} \lint{5}}$ & $=e_{1}^{4} + 5 e_{2} e_{1}^{2} + 5 e_{2}^{2}$&
&$\lvarphi{4} \lvarphi{6}$ & $={\lint{4} \lint{6}}/{\lint{2}^{2} \lint{3}}$ & $=e_{1}^{4} + 5 e_{2} e_{1}^{2} + 6 e_{2}^{2}$\\
&$\lvarphi{6}^{2}$ & $={\lint{6}^{2}}/{\lint{2}^{2} \lint{3}^{2}}$ & $=e_{1}^{4} + 6 e_{2} e_{1}^{2} + 9 e_{2}^{2}$&&&&\\
\hline
\end{tabular}
\end{small}
  \caption{ The $19$ degree-$4$ elements of  $\Anti{\Phi}_4$ (see \href{https://oeis.org/A341711}{OEIS: A341711}).}\label{table_CGF_anti}
\end{table}
In particular, we have the following elements of $\Anti{\Phi}$ (see \autorefname{Equations}{def_cyclo} to \ref{formule_partitions_distinct_parts}):
\begin{align}
	\lint{n}&= \prod_{d | n,\ d\neq 1} \lvarphi{d}\\
	\lfact{n} &= \prod_{d=2}^n \lvarphi{d}^{\ \lfloor n/d\rfloor},\\
	\lbinom{n}{k} &= \prod_{d=2}^n \lvarphi{d}^{\ \lfloor n/d\rfloor -\lfloor k/d\rfloor-\lfloor (n-k)/d\rfloor},\\
 	\frac{1}{\lint{a+b}} \lbinom{a+b}{a} &= \prod_{d=2}^n \lvarphi{d}^{\ \lfloor (a+b)/d\rfloor -\lfloor a/d\rfloor-\lfloor b/d\rfloor-\scharac(d\,|\,(a+b))},
	\qquad (a,b)=1.\\
	\lp{n}&=\prod_{d\,|\, k} \lvarphi{(2^{a+1}d)},\label{formule_power_sum_anti}\\
     \lp{1} \lp{2} \cdots \lp{n}&=   \prod_{d=1}^{n} \lvarphi{(2\,d)} ^{\,\lfloor(n+d)/2d\rfloor}.\label{anti_formule_partitions_distinct_parts}
 \end{align}
As stated in \cite[Thm 1]{sagan2020lucas-17f}, 
   \begin{tcolorbox}[colback=azure!5!white]
  \begin{lemma}\label{lemma_product_anti_gen}
As in \autorefname{Definition}{quotient_in_Phi}, all polynomials that may be expressed in the form
\begin{align*}
    \widetilde{f}_{\alpha\beta} = \prod_{i=1}^m \frac{\lint{a_i}}{\lint{b_i}}, 
\end{align*}
for some multisets $\alpha=\{a_i\}_{1\leq i\leq m}$ and $\beta=\{b_i\}_{1\leq i\leq m}$ of positive integers,
necessarily lie in $\Anti{\Phi}$, and their $\widetilde{\varphi}$-expansion takes the form:
\begin{align*}
   \widetilde{f}_{\alpha\beta} = \prod_{d} \widetilde{\varphi}_d^{\ n_{\alpha\beta}(d)}, 
\end{align*}
with $n_{\alpha\beta} (d)= \#\{ a_i\ |\ d\ \text{divides}\ a_i,\ 1\leq i\leq m\} - \#\{ b_i\ |\ d\ \text{divides}\ b_i,\ 1\leq i\leq m\}$ positive.
 \end{lemma}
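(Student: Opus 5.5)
The plan is to reduce everything to the factorization of the Lucas integers into Lucas atoms, and then to count multiplicities. First, from the defining formula \eqref{twisted_cyclo} for $\lvarphi{n}$ and Möbius inversion, we have for every $n\geq 1$
\begin{align*}
  \lint{n}=\prod_{d\,|\,n,\ d\neq 1}\lvarphi{d}.
\end{align*}
This is the $\Anti{(-)}$-image of \eqref{def_cyclo}. Since $\Anti{(-)}$ is a ring isomorphism (\autorefname{Lemma}{lem_gamma_anti}), it transports the identity $\qtint{n}=\prod_{d|n,d\neq1}\varphi_d$ verbatim. Substituting into $\widetilde{f}_{\alpha\beta}=\prod_i\lint{a_i}/\lint{b_i}$ and collecting exponents of each $\lvarphi{d}$ gives the rational expression
\begin{align*}
  \widetilde{f}_{\alpha\beta}=\prod_{d\geq 2}\lvarphi{d}^{\,n_{\alpha\beta}(d)}.
\end{align*}
Here $n_{\alpha\beta}(d)$ is the number of $a_i$ divisible by $d$ minus the number of $b_i$ divisible by $d$. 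At this stage this is an identity in the fraction field of $\QQ[e_1,e_2]$.

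The second step is to show that every $n_{\alpha\beta}(d)$ is nonnegative whenever $\widetilde{f}_{\alpha\beta}$ is a polynomial, which is the hypothesis implicit in ``all polynomials that may be expressed''. By \autorefname{Proposition}{lem_lucas_atom_gamma_positive}, each $\lvarphi{d}$ is irreducible in $\QQ[e_1,e_2]$. The $\lvarphi{d}$ are also pairwise non-associate, since their $\Anti{(-)}$-preimages $\varphi_d$ have disjoint root sets, namely the primitive $d$-th roots of unity. As $\QQ[e_1,e_2]$ is a UFD, the factorization above is the unique one, so polynomiality forces $n_{\alpha\beta}(d)\geq 0$ for all $d$. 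Consequently $\widetilde{f}_{\alpha\beta}$ is a product of atoms $\lvarphi{c}$ with $c\geq 2$ and no $e_2$ factor, so it lies in $\Anti{\Phi}$.

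Equivalently, the whole argument can be run on the cyclotomic side. One applies the factorization \eqref{quotient_in_Phi} for $f_{\alpha\beta}$ stated earlier, uses the classical divisibility by cyclotomic polynomials in $\QQ[q]$ to get nonnegative exponents, and then transports the result through $\Anti{(-)}$, which is multiplicative and sends $\varphi_d$ to $\lvarphi{d}$ by construction.

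The main obstacle is the second step. One has to be careful that irreducibility over $\QQ[e_1,e_2]$, not over $\QQ[q,t]$, is what is needed: the footnote shows $\lvarphi{7}$ factors in $\QQ[q,t]$. Non-associateness also has to be justified. I would argue it via $\Anti{(-)}$ and the homogenized cyclotomic polynomials in $q,t$, where distinct $\varphi_d$ have no common linear factor $q-\omega t$.
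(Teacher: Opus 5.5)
Your argument is correct and follows the same route as the paper. The paper gives no separate proof: it obtains the lemma by transporting the cyclotomic factorization of $f_{\alpha\beta}$ from \eqref{quotient_in_Phi} through the ring automorphism $\Anti{(-)}$, resting on \cite[Thm~1]{sagan2020lucas-17f} for the nonnegativity of the exponents $n_{\alpha\beta}(d)$, and you make that step explicit via unique factorization in $\QQ[e_1,e_2]$. Two small points: your M\"obius-inversion step needs \eqref{twisted_cyclo} read with $d\neq 1$ rather than the printed $d\neq n$, which is a typo; and pairwise coprimality of the atoms, which follows from the $\varphi_d$ sharing no linear factor $q-\omega t$, already forces $n_{\alpha\beta}(d)\geq 0$, so the cited irreducibility is not actually needed.
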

  \end{tcolorbox}
Thus, further instances include the polynomials 
\begin{align}
 \prod_{i=0}^{n-1}\frac{\lint{3i+1}}{\lint{n+i}},&&
     \prod_{i=1}^a\prod_{j=1}^b \prod_{k=1}^b\frac{\lint{i+j+k-1}}{\lint{i+j+k-2}},\qquad \text{and} &&  
    \prod_{1\leq i\leq j\leq k\leq r}  \frac{\lint{i+j+k-1}}{\lint{i+j+k-2}};
   \end{align}
the $\Anti{(-)}$-analogs\footnote{With various notations as before.}  
\begin{align}
 \widetilde{\mathrm{SYT}}_\mu &= \frac{\lfact{n}}{\prod_{c\in \mu}{\lint{\varepsilon_\mu(c)} }},\qquad \text{and}  
  &\widetilde{\mathrm{SSYT}}^{{\scriptstyle N}}_\mu  = e_2^{n(\mu)} \prod_{(i,j)\in \mu} \frac{\lint{N+i-j}}{\lint{\varepsilon_\mu(i,j)}};
 \end{align}
as well as polynomial cases of
\begin{align}
   \prod_{i=1}^n \left({\lint{m+i}}/{\lint{i}}\right)^{a_i} 
   \qquad  \text{and} \qquad  {{\lbinom{n}{k}}/{\lbinom{n}{\ell}}}.
\end{align}
Interesting questions arise when one looks for families of combinatorial objects counted by the coefficients of $q^it^j$ in these polynomials, as well as for coefficients of $e_1^a e_2^b$ ({\sl aka} $\gamma$-coefficients).


 
%

\section{Markov numbers (polynomials)}\label{sec_Markov}
As another form of exploration of the symmetric $(q,t)$-overview potential, let us consider a $(q,t)$-analog of Markov numbers that ``correspond'' to the $q$-analogues introduced in \cite{MorierGenoud}, with $(q,t)$-\unedefinition{Markov triples} $(X,Y,Z)$ satisfying  the symmetric function Markov equation: 
\begin{align}\label{Markov_equation}
 X^2 + Y^2 + Z^2 =\frac{[3]}{e_2} X\,Y\,Z +3 -\frac{[3]}{e_2}.
\end{align} 
Here, we consider that  $X={x}/{e_2^i}$, $Y={y}/{e_2^j}$, and $Z={z}/{e_2^k}$,  for homogeneous symmetric polynomials $x$, $y$ and $z$ in $\Lambda_2$ having respective degrees $2i$, $2j$ and $2k$. In other terms $X$, $Y$ and $Z$ are ``degree zero'' Laurent polynomials in $\Z[e_1,e_2^\pm]$. We may clearly go back and forth between $X$ and $x$, as convenient. 

A mostly direct translation of \cite[Prop. 2]{MorierGenoud} gives
   \begin{tcolorbox}[colback=azure!5!white]
\begin{proposition}\label{prop_MorierGenoud}
     Every $(q,t)$-Markov triple $(a,b,c)$ can be obtained from the triple $(1,1,e_2/e_2)$ by performing a sequence of \unedefinition{Vieta} transformation $(a,b,c)\mapsto (a,b,c')$, with
     \begin{align}\label{vieta_transformation}
     c' = \frac{\eint{3}}{e_2}\,a\,c-b
     \end{align}
 combined with permutations of $a$, $b$ and $c$.
\end{proposition}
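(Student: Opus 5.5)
We follow the classical descent argument for Markov triples, as in \cite[Prop. 2]{MorierGenoud}, transported to the ring $\Z[e_1,e_2^{\pm}]$ of degree-zero Laurent polynomials. Write $\kappa:=\eint{3}/e_2=(e_1^2-e_2)/e_2$, so that \autoref{Markov_equation} reads $X^2+Y^2+Z^2=\kappa XYZ+3-\kappa$.

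The first step is to record that the equation is invariant under permutations of $(X,Y,Z)$. It is also preserved by the Vieta move \autoref{vieta_transformation}. Indeed, with $a,b$ fixed, the equation is a monic quadratic in the third variable, namely $c^2-\kappa ab\,c+(a^2+b^2-3+\kappa)=0$. Its two roots $c,c'$ therefore satisfy $c+c'=\kappa ab$ and $cc'=a^2+b^2-3+\kappa$. This gives $c'=\kappa ab-c$, which is the stated move up to the relabeling permitted by permutations. Since $\kappa ab-c$ is again a degree-zero element of $\Z[e_1,e_2^{\pm}]$, the solution set is closed under the moves. We should also check that $(1,1,1)$ is a solution: $3=\kappa+3-\kappa$. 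Hence the orbit of $(1,1,e_2/e_2)$ consists of $(q,t)$-Markov triples.

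For the converse, we descend using a size function. Clear denominators: write $X=x/e_2^i$ with $\deg x=2i$, and measure a triple by $N(X,Y,Z)=i+j+k$, or equivalently by the $e_1$-degree of the numerators, since $x$ is homogeneous of degree $2i$. Order the triple so that $i\le j\le k$. We then aim to show that, unless $i=j=k=0$, the other root $Z'=\kappa XY-Z$ has strictly smaller index $k'<k$. The relation $ZZ'=X^2+Y^2-3+\kappa$ shows that $Z'$ has index at most $\max(2j,1)-k$. Comparing leading $e_1$-terms in $\kappa XY-Z$ gives the complementary control. So, whenever $k>j$ or $k\ge 1$ with suitable leading coefficients, we get $k'<k$. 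Iterating, we reach a triple with all indices $0$, that is, a triple of constants solving $x^2+y^2+z^2=\kappa xyz+3-\kappa$ identically in $e_1,e_2$. Comparing the coefficients of $\kappa$ forces $xyz=1$ and $x^2+y^2+z^2=3$. This gives $(1,1,1)$ up to signs, and the signs can presumably be excluded by a positivity or specialization argument.

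The main obstacle is the strict decrease $k'<k$ when the indices are balanced, for instance $j=k$. The degree bound from $ZZ'$ alone is then not strict. My first attempt is to specialize $e_1=2$, $e_2=1$ (that is, $q=t=1$), where $\kappa=3$ and the equation becomes the classical Markov equation $x^2+y^2+z^2=3xyz$. There, descent in absolute value is classical, and it reduces ties to the numerical case. A second attempt is to specialize to the $q$-setting of \cite{MorierGenoud} with $t=1$ and invoke their proof directly, then lift by homogeneity, since degree-zero Laurent polynomials in $e_1,e_2$ are determined by that specialization. The lifting is the step that needs the most care.
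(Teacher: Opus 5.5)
There is a genuine gap at exactly the point you flag: the strict descent is never proved, and the ``balanced case'' is left open. The missing idea is to compare top degrees in the Markov equation itself, not only in $ZZ'$.

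The degree-zero part of $\Z[e_1,e_2^{\pm1}]$ is the polynomial ring $\Z[u]$ with $u=e_1^2/e_2$. Since $\kappa=u-1$, the equation becomes $X^2+Y^2+Z^2=(u-1)XYZ+4-u$ in $\Z[u]$. Measure size by $\deg_u$ (half the $e_1$-degree of a numerator), not by the exponent $i$, which depends on the chosen representation, e.g.\ $1=e_2/e_2$.

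No entry vanishes: a sum of two squares in $\Z[u]$ is $0$ or has even degree, so it cannot equal $4-u$. Let $\alpha\le\beta\le\gamma$ be the degrees. The left side has degree exactly $2\gamma$, since its top coefficient is a positive sum of squares and nothing cancels. The right side has degree $1+\alpha+\beta+\gamma$ unless all entries are constant. Hence $\gamma=1+\alpha+\beta>\beta$, so ties never occur for a non-constant triple. Then $ZZ'=X^2+Y^2+u-4$ gives $\deg Z'=\max(2\beta,1)-\gamma<\gamma$, so the total degree strictly drops. The descent ends at a constant triple, where the coefficient of $u$ forces $XYZ=1$ and $X^2+Y^2+Z^2=3$.

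Your first fallback does not replace this. The specialization $u\mapsto 4$ is far from injective on $\Z[u]$, so reaching $(1,1,1)$ numerically says nothing about the $(q,t)$-triple without the degree argument. Also, switching between two size functions depending on whether there is a tie is not a well-founded descent.

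The signs also cannot ``presumably be excluded'': $(-1,-1,1)$ solves the equation. At $q=t=1$ we have $ZZ'=X^2+Y^2$, so each Vieta move preserves the sign of every entry evaluated at $q=t=1$, and permutations merely permute these signs. Hence the orbit of $(-1,-1,1)$ never meets that of $(1,1,1)$. The statement must be read with a positivity normalization: entries positive at $q=t=1$, i.e.\ the triple lifts a classical Markov triple. That same sign invariance then forces the terminal constant triple to be $(1,1,1)$.

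Your second fallback is in fact the paper's own route: the paper gives no independent argument and simply transports \cite[Prop.~2]{MorierGenoud}. The lift is not the delicate part. Setting $t=1$ embeds $\Z[u]$ injectively into $\Z[q^{\pm1}]$ via $u\mapsto q+2+q^{-1}$, sends $\kappa$ to $q+1+q^{-1}$, and commutes with the moves. So a sequence of moves found after specialization lifts verbatim by injectivity. If you take that route, say so, give this short argument, and check that the cited result's hypotheses (in particular on signs) match yours. As written, your proposal leaves both the descent and the lift open.
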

\end{tcolorbox}
This implies (see \cite[Thm 1]{MorierGenoud}) that there is a unique\footnote{Modulo Markov's conjecture.} $(q,t)$-Markov triple that ``lifts'' a given classical Markov triple, hence we may associate a $(q,t)$-\unedefinition{Markov numbers} $\meint{a}$ to each Markov number $a$ (taking only the numerator of the solution). Thus, we get
\begin{align*}
&\meint{1} = e_{2},\\
&\meint{2} = e_{1}^{2} - 2 e_{2},\\
&\meint{5} = e_{1}^{4} - 3 e_{2} e_{1}^{2} + e_{2}^{2},\\
&\meint{13} = e_{1}^{6} - 4 e_{2} e_{1}^{4} + 3 e_{2}^{2} e_{1}^{2} + e_{2}^{3},\\
&\meint{29} = e_{1}^{8} - 6 e_{2} e_{1}^{6} + 12 e_{2}^{2} e_{1}^{4} - 9 e_{2}^{3} e_{1}^{2} + e_{2}^{4},\\
&\meint{34} = e_{1}^{8} - 5 e_{2} e_{1}^{6} + 6 e_{2}^{2} e_{1}^{4} + e_{2}^{3} e_{1}^{2} - 2 e_{2}^{4},\\
&\meint{89} = e_{1}^{10} - 6 e_{2} e_{1}^{8} + 10 e_{2}^{2} e_{1}^{6} - e_{2}^{3} e_{1}^{4} - 6 e_{2}^{4} e_{1}^{2} + e_{2}^{5},\\
&\meint{169} = e_{1}^{12} - 9 e_{2} e_{1}^{10} + 32 e_{2}^{2} e_{1}^{8} - 57 e_{2}^{3} e_{1}^{6} + 51 e_{2}^{4} e_{1}^{4} - 18 e_{2}^{5} e_{1}^{2} + e_{2}^{6}.
\end{align*}
Applying the $\Anti{(-)}$-operator, we get the $(q,t)$-\unedefinition{anti-Markov polynomials} $\mlint{a}:=\Anti{\eint{a}}\,\!^{{\scriptscriptstyle (M)}}$. Evaluating  both polynomials $(\meint{a},\mlint{a})$, at $q=t=1$, gives the sequence of pairs :
  $$ (2, 6), (5, 29), (29, 869), (169, 26041), (985, 780361), (5741, 23384789), (33461, 700763309), \ldots $$
Besides $\meint{2}$, among the above solutions and many other computed, those that appear to be $\gamma$-anti-positive seem to correspond $\meint{P_{2n+1}}$, for odd index of Pell numbers (see \href{https://oeis.org/A001653}{OEIS:A001653}).
Similar questions are also considered in \cite{NOVELLI2020102019}. 

Again up to a simple reformulation of \cite[Prop. 6]{MorierGenoud}, we get 
   \begin{tcolorbox}[colback=azure!5!white]
\begin{proposition}
  For all Markov number $a\geq 5$, the polynomial $\meint{a}$ is Schur-positive.
 \end{proposition}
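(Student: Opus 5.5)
The plan is to reduce the statement to the $q$-version of \cite[Prop. 6]{MorierGenoud} through the homogenization dictionary of \autoref{direct_qt_analog}. By the unimodality Lemma of that section, a symmetric homogeneous polynomial in $\N[q,t]$ is Schur-positive exactly when it is unimodal by degree. Being homogeneous, $\meint{a}$ is determined by its specialization $\meint{a}(q,1)$, via $\meint{a}(q,t)=t^{2i}\,\meint{a}(q/t,1)$, where $2i$ is its degree. So it suffices to prove three things: that $\meint{a}(q,1)$ has coefficients in $\N$, that it is palindromic of degree $2i$, and that it is unimodal. Symmetry in $(q,t)$ is automatic, since $\meint{a}$ lies in $\Lambda_2=\Z[e_1,e_2]$.

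First I identify the specialization with the $q$-Markov numbers of \cite{MorierGenoud}. At $t=1$ we have $e_1=1+q$, $e_2=q$ and $\eint{3}/e_2=(1+q+q^2)/q$. Hence \autoref{Markov_equation} becomes the $q$-deformed Markov equation
$$X^2+Y^2+Z^2=\frac{[3]_q}{q}\,XYZ+3-\frac{[3]_q}{q},$$
up to checking that this is their normalization; if it is not, a rescaling $q\mapsto q^{-1}$ or a power-of-$q$ twist should match the two. I then check by induction along the Vieta tree of \autoref{prop_MorierGenoud} that the specialized triples coincide with theirs. Both families start at $(1,1,1)$ and obey the same mutation $c'=\frac{[3]_q}{q}\,a\,c-b$. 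For the numerators $x,y,z$ of degrees $2i,2j,2k$, this mutation reads
$$z' = \eint{3}\,x\,z\,e_2^{\,i+k-j-1}\cdot(\text{normalization}) - y\,e_2^{\,\cdots},$$
so I must also verify that the new numerator is homogeneous of the predicted degree. This confirms that "taking only the numerator" commutes with the specialization $t=1$.

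Then I invoke \cite[Prop. 6]{MorierGenoud}: for every Markov number $a\ge 5$, the $q$-Markov polynomial has positive integer coefficients, is palindromic, and is unimodal. Homogenizing gives that $\meint{a}$ is a symmetric element of $\N[q,t]$ which is unimodal by degree, hence Schur-positive by the Lemma.

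The main obstacle is the bookkeeping in the second paragraph: the powers of $e_2$ that are stripped when passing from $X=x/e_2^i$ to $x$. The $q$-numbers of \cite{MorierGenoud} may differ from $\meint{a}(q,1)$ by a power of $q$. That is harmless for unimodality, but it must be pinned down so that palindromicity is centered at degree $i$, which is what makes the homogenization symmetric. If the citation turns out not to give unimodality directly, I would fall back on their expression of $q$-Markov numbers as entries of products of the $q$-deformed matrices of $\mathrm{SL}_2$ attached to Christoffel words. Rewriting those generators in $e_1,e_2$, I would try to show that each such entry is a sum of products of Schur-positive pieces of the form $s_{ab}$ with a common centre, using the Pieri rule \autoref{Pieri-Formula}. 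The base cases $\meint{5}=s_4$ and $\meint{13}$ can be checked by hand from the listed $e$-expansions.
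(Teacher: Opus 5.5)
Your proposal is correct and takes the same route as the paper, whose proof is exactly this ``simple reformulation'' of \cite[Prop. 6]{MorierGenoud}: the $t=1$ specialization of the $(q,t)$-Vieta tree is the $q$-Vieta tree, and the lemma equating unimodality with Schur-positivity turns unimodality of the $q$-Markov polynomials for $a\geq 5$ into Schur-positivity of $\meint{a}$. The bookkeeping you single out as the main obstacle is harmless: $\meint{a}$ is already a homogeneous element of $\Lambda_2$, so palindromicity is automatic and nothing has to be re-homogenized, and a shift by a power of $q$ (equivalently a factor $e_2$, which sends $s_{ab}$ to $s_{(a+1,b+1)}$) affects neither unimodality nor Schur-positivity.
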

 \end{tcolorbox}
 Small vales are as follows:
 \begin{align*}
&\meint{5} = s_{4},\\ 
&\meint{13} = s_{33} + s_{51} + s_{6},\\ 
&\meint{29} = s_{53} + 2 s_{62} + s_{71} + s_{8},\\ 
&\meint{34} = 2 s_{53} + s_{62} + 2 s_{71} + s_{8},\\ 
&\meint{89} = s_{55} + 3 s_{64} + 4 s_{73} + 3 s_{82} + 3 s_{91} + s_{(10)},\\ 
&\meint{169} = 2 s_{66} + 5 s_{75} + 6 s_{84} + 6 s_{93} + 5 s_{(10,2)} + 2 s_{(11,1)} + s_{(12)}.
\end{align*}
The $(q,t)$-expansions of $\meint{a}$ also appear to be log-concave, which has been checked by explicit calculations for all $a\leq 135137$. 
This pushes us to make the following conjecture (as we also checked it up to the same bound):
   \begin{tcolorbox}[colback=azure!5!white]
 \begin{conjecture}
  For all anti-Markov number $a\geq 2$, the polynomial $\mlint{a}$ is Schur-positive, and its $(q,t)$-expansion is log-concave.
 \end{conjecture}
 \end{tcolorbox}
Small values are:
\begin{align*}
&\mlint{2} = 3 s_{11}+s_{2},\\
&\mlint{5} = 6 s_{22} + 6 s_{31} + s_{4},\\
&\mlint{13} = 15 s_{33} + 24 s_{42} + 9 s_{51} + s_{6},\\
&\mlint{29} = 78 s_{44} + 127 s_{53} + 62 s_{62} + 13 s_{71} + s_{8},\\
&\mlint{34} = 48 s_{44} + 90 s_{53} + 51 s_{62} + 12 s_{71} + s_{8},\\
&\mlint{89} = 171 s_{55} + 345 s_{64} + 246 s_{73} + 87 s_{82} + 15 s_{91} + s_{(10)},\\
&\mlint{169} = 1364 s_{66} + 2687 s_{75} + 1926 s_{84} + 750 s_{93} + 167 s_{(10,2)} \\
     &\qquad\qquad\qquad + 20 s_{(11,1)} + s_{(12)}.
\end{align*}
Observe that the $\alpha$-map (see \href{https://people.mpim-bonn.mpg.de/zagier/files/doi/10.2307/2007348/fulltext.pdf}{Zagier}) to the Euclid tree corresponds to the half-degree of the symmetric Markov polynomials: $$\alpha(a)=1+\frac{1}{2}\deg(\meint{a}).$$

 \subsection{Another kind of \texorpdfstring{$(q,t)$}{qt}-deformation of Markov numbers}
A different kind of $q$-deformed ``squared'' Markov equation, considered in \cite{bittmannYildirim} \href{https://arxiv.org/pdf/2602.14802}{Bittmann-Jouteur-Oguz-Molander-Yildrin},  gives rise to the symmetric $(q,t)$-analog equation: 
  \begin{align}
     e_2 (X^2+Y^2+Z^2) + (\eint{3}-e_2) (XY +XZ +YZ) = 3 \,\eint{3} XYZ
  \end{align}  
Out of a given solution $(a,b,c)$, we may get another one $(a',b,c)$ by setting
 \begin{align}
      a' :=  3 \,\frac{\eint{3}}{e_2} bc - \frac{(\eint{3}-e_2)}{e_2} (b+c) -a
 \end{align}
 The entries of the resulting triples are degree-$0$ Laurent polynomials, with denominators that are pure powers of $e_2$. Thus, we need only describe their numerators in the $e$-basis (since they have matching degree). Small cases, in which we also display the corresponding Schur-expansion, are as follows:
 \begin{align*}
& 2^2, &&e_{1}^{2}, 
	\\ &&&s_{11} + s_{2} ,\\
& 5^2, &&2 e_{1}^{4} - 2 e_{2} e_{1}^{2} + e_{2}^{2}, 
         \\ &&&3 s_{22} + 4 s_{31} + 2 s_{4} ,\\
& 13^2, &&4 e_{1}^{6} - 6 e_{2} e_{1}^{4} + 2 e_{2}^{2} e_{1}^{2} + e_{2}^{3}, 
	\\ &&&11 s_{33} + 20 s_{42} + 14 s_{51} + 4 s_{6} ,\\
& 29^2, &&6 e_{1}^{8} - 14 e_{2} e_{1}^{6} + 14 e_{2}^{2} e_{1}^{4} - 6 e_{2}^{3} e_{1}^{2} + e_{2}^{4}, 
         \\ &&&37 s_{44} + 78 s_{53} + 64 s_{62} + 28 s_{71} + 6 s_{8} ,\\
& 34^2, &&8 e_{1}^{8} - 16 e_{2} e_{1}^{6} + 8 e_{2}^{2} e_{1}^{4} + e_{2}^{3} e_{1}^{2}, 
        \\ &&&49 s_{44} + 105 s_{53} + 88 s_{62} + 40 s_{71} + 8 s_{8} ,\\
& 169^2, &&18 e_{1}^{12} - 66 e_{2} e_{1}^{10} + 110 e_{2}^{2} e_{1}^{8} - 102 e_{2}^{3} e_{1}^{6} + 52 e_{2}^{4} e_{1}^{4} - 12 e_{2}^{5} e_{1}^{2} + e_{2}^{6}, 
        \\ &&&727 s_{66} + 1712 s_{75} + 1742 s_{84} + 1130 s_{93} + 488 s_{10.2} + 132 s_{11.1} + 18 s_{12.} .
   \end{align*}
Each affords a \unedefinition{palindromic factorizations} as a product of two $(q,t$)-polynomials in $\N[q,t]$, which is to say that one factor is the palindromic image of the other. For instance, we get
\begin{align*}
&2^2, &&(q + t)\cdot (t+q),\\
&5^2, &&(q^{2} + 2 q t + 2 t^{2}) \cdot (2 q^{2} + 2 q t + t^{2}),\\
&13^2, &&(2 q^{3} + 4 q^{2} t + 5 q t^{2} + 2 t^{3}) \cdot (2 q^{3} + 5 q^{2} t + 4 q t^{2} + 2 t^{3}),\\
&29^2, &&(2 q^{4} + 6 q^{3} t + 10 q^{2} t^{2} + 8 q t^{3} + 3 t^{4}) \cdot (3 q^{4} + 8 q^{3} t + 10 q^{2} t^{2} + 6 q t^{3} + 2 t^{4}),\\
&34^2, &&(2 \, q^{4} + 7 \, q^{3} t + 11 \, q^{2} t^{2} + 10 \, q t^{3} + 4 \, t^{4}) \cdot (4 \, q^{4} + 10 \, q^{3} t + 11 \, q^{2} t^{2} + 7 \, q t^{3} + 2 \, t^{4}).
\end{align*}
As we do not have a symmetric function factorization, the corresponding Lucas analogues are not necessarily squares. In fact, in most cases they are not. For instance, the $q=t=1$ evaluations of the $\Anti{(-)}$ images of our previous polynomials give (on the right are these polynomial images):
\begin{align*}
&\bleu{2}^2&&\tilde{\mapsto}&&4, && \bleu{e_{1}^{2}},\\
&\bleu{5}^2&&\tilde{\mapsto} &&41, &&\bleu{2 e_{1}^{4} + 2 e_{2} e_{1}^{2} + e_{2}^{2}},\\
&13^2&&\tilde{\mapsto} &&359, &&4 e_{1}^{6} + 6 e_{2} e_{1}^{4} + 2 e_{2}^{2} e_{1}^{2} - e_{2}^{3},\\
&\bleu{29}^2&&\tilde{\mapsto} &&2681, &&\bleu{6 e_{1}^{8} + 14 e_{2} e_{1}^{6} + 14 e_{2}^{2} e_{1}^{4} + 6 e_{2}^{3} e_{1}^{2} + e_{2}^{4}},\\
&34^2&&\tilde{\mapsto} &&3196, &&8 e_{1}^{8} + 16 e_{2} e_{1}^{6} + 8 e_{2}^{2} e_{1}^{4} - e_{2}^{3} e_{1}^{2},\\
&\bleu{169}^2&&\tilde{\mapsto} &&176881, &&\bleu{18 e_{1}^{12} + 66 e_{2} e_{1}^{10} + 110 e_{2}^{2} e_{1}^{8} + 102 e_{2}^{3} e_{1}^{6} + 52 e_{2}^{4} e_{1}^{4} + 12 e_{2}^{5} e_{1}^{2} + e_{2}^{6}}.
\end{align*}
Observe that $\gamma$-positivity (outlined in blue) again seems to align with (squares of) odd indexed Pell numbers.

%
%

 \subsection{Related questions}
Stanley (see \cite{StanleyCyclo}) considers \unedefinition{cyclotomic (multi)sets} $S\subset \N$ that have a generating function of the form
    $$G_S(x)=\frac{1}{1-x}-\sum_{j\in S} x^j = \frac{\prod_{i=1}^r (1-x^{a_i})} {\prod_{j=1}^t(1-x^{b_j})}$$
  for multisets $\alpha=\{a_i\}_{1\leq i\leq m}$ and $\beta=\{b_i\}_{1\leq i\leq m}$ of positive integers.
In particular, a finite set $S$ is cyclotomic, if and only if $1-(1-x) \sum_{j\in S} x^j$ can be expressed as a finite product of cyclotomic polynomials. These expressions may be homogenized just as we have been doing all along, and we may then consider their analogs in $\widetilde{\Phi}$. The notion of ``magic'' expansion of  polynomials (see \cite{avila2026luckmagicpitmanstanleypolytopes}) seems to bear many similarities with that of $\gamma$-expansions. It seems tantalizing to look for a symmetric $(q,t)$-analog context, that would  make natural many of their properties, just as we have shown to happen with $\gamma$-expansions.

\fancyhead{Bibliography}
\bibliographystyle{abbrv}    

\bibliography{bibliography}

@article{Alecci2025,
  title = {On alternative definition of {L}ucas atoms and their $p$-adic valuations},
  volume = {207},
  ISSN = {1436-5081},
  url = {http://dx.doi.org/10.1007/s00605-025-02087-w},
  DOI = {10.1007/s00605-025-02087-w},
  number = {2},
  journal = {Monatshefte f\"{u}r Mathematik},
  publisher = {Springer Science and Business Media LLC},
  author = {Alecci,  Gessica and Miska,  Piotr and Murru,  Nadir and Romeo,  Giuliano},
  year = {2025},
  month = May,
  pages = {175–196}
}

@article {MR2465404,
    AUTHOR = {Assaf, Sami H.},
     TITLE = {A generalized major index statistic},
   JOURNAL = {S\'em. Lothar. Combin.},
  FJOURNAL = {S\'eminaire Lotharingien de Combinatoire},
    VOLUME = {60},
      YEAR = {2008/09},
     PAGES = {Art. B60c, 14},
      ISSN = {1286-4889},
   MRCLASS = {05A15 (05A05 05E05 05E10 60A99 60C05)},
  MRNUMBER = {2465404},
MRREVIEWER = {Qing-Hu\ Hou},
}

@article{Athanasiadis2025,
  title = {On the real‐rootedness of the {E}ulerian transformation},
  volume = {111},
  ISSN = {1469-7750},
  url = {http://dx.doi.org/10.1112/jlms.70083},
  DOI = {10.1112/jlms.70083},
  number = {2},
  journal = {Journal of the London Mathematical Society},
  publisher = {Wiley},
  author = {Athanasiadis,  Christos A.},
  year = {2025},
  month = Feb 
}

@misc{AthanasiadisGamma,
  doi = {10.48550/ARXIV.1711.05983},
  url = {https://arxiv.org/abs/1711.05983},
  author = {Athanasiadis,  Christos A.},
  title = {Gamma-positivity in combinatorics and geometry},
  publisher = {arXiv},
  year = {2017},
  copyright = {arXiv.org perpetual,  non-exclusive license}
}

@misc{avila2026luckmagicpitmanstanleypolytopes,
      title={Luck and magic for {P}itman-{S}tanley polytopes and parking functions}, 
      author={Nicolas Avila and Luis Ferroni and Alejandro H. Morales},
      year={2026},
      eprint={2603.19194},
      archivePrefix={arXiv},
      primaryClass={math.CO},
      url={https://arxiv.org/abs/2603.19194}, 
}

@article {barrygammavectors,
    AUTHOR = {Barry, Paul},
     TITLE = {The {$\gamma$}-vectors of {P}ascal-like triangles defined by
              {R}iordan arrays},
   JOURNAL = {J. Integer Seq.},
  FJOURNAL = {Journal of Integer Sequences},
    VOLUME = {22},
      YEAR = {2019},
    NUMBER = {1},
     PAGES = {Art. 19.1.4, 22},
      ISSN = {1530-7638},
   MRCLASS = {11B83 (05A15 15B36)},
  MRNUMBER = {3917135},
MRREVIEWER = {Eric\ S.\ Egge},
}

@article{BergeronFoulkes, 
year = {2017}, 
title = {A $q$-Analog of {F}oulkes' Conjecture}, 
author = {Bergeron, Fran\c{c}ois}, 
journal = {The Electronic Journal of Combinatorics}, 
doi = {10.37236/6004}, 
number = {1}, 
volume = {24}, 
}

@article{BergeronCeballosKustner,
  title = {Elliptic and $q$-Analogs of the Fibonomial Numbers},
  ISSN = {1815-0659},
  url = {http://dx.doi.org/10.3842/SIGMA.2020.076},
  DOI = {10.3842/sigma.2020.076},
  journal = {Symmetry,  Integrability and Geometry: Methods and Applications},
  publisher = {SIGMA (Symmetry,  Integrability and Geometry: Methods and Application)},
  author = {Bergeron,  Nantel and Ceballos,  Cesar and K\"{u}stner,  Josef},
  year = {2020},
  month = Aug 
}

@book{BergeronBook,
	author = {Bergeron, Fran\c{c}ois},
	doi = {10.1201/b10583},
	isbn = {978-1-56881-324-0},
	mrclass = {05-01 (05Exx 13A50 20C30)},
	mrnumber = {2538310},
	mrreviewer = {Marc A. A. van Leeuwen},
	pages = {viii+221},
	publisher = {Canadian Mathematical Society, Ottawa, ON; A K Peters, Ltd., Wellesley, MA},
	series = {CMS Treatises in Mathematics},
	title = {Algebraic combinatorics and coinvariant spaces},
	url = {https://doi-org/10.1201/b10583},
	year = {2009}}

@article{Bennett_2020,
   title={Combinatorial Interpretations of {L}ucas Analogues of Binomial Coefficients and {C}atalan Numbers},
   volume={24},
   ISSN={0219-3094},
   url={http://dx.doi.org/10.1007/s00026-020-00500-9},
   DOI={10.1007/s00026-020-00500-9},
   number={3},
   journal={Annals of Combinatorics},
   publisher={Springer Science and Business Media LLC},
   author={Bennett, Curtis and Carrillo, Juan and Machacek, John and Sagan, Bruce E.},
   year={2020},
   pages={503–530} }

@article{Billey2024,
  title = {Cyclotomic Generating Functions},
  volume = {31},
  ISSN = {1077-8926},
  url = {http://dx.doi.org/10.37236/12687},
  DOI = {10.37236/12687},
  number = {4},
  journal = {The Electronic Journal of Combinatorics},
  publisher = {The Electronic Journal of Combinatorics},
  author = {Billey,  Sara and Swanson,  Joshua},
  year = {2024},
  month = Oct 
}

@misc{bittmannYildirim,
      title={A mirror deformation of {M}arkov numbers}, 
      author={L\'ea Bittmann and Perrine Jouteur and Ezgi Kantarcı Oğuz and Melody Molander and Emine Yıldırım},
      year={2026},
      eprint={2602.14802},
      archivePrefix={arXiv},
      primaryClass={math.CO},
      url={https://arxiv.org/abs/2602.14802}, 
}

@article {MR4378084,
    AUTHOR = {Br\"and\'en, Petter and Jochemko, Katharina},
     TITLE = {The {E}ulerian transformation},
   JOURNAL = {Trans. Amer. Math. Soc.},
  FJOURNAL = {Transactions of the American Mathematical Society},
    VOLUME = {375},
      YEAR = {2022},
    NUMBER = {3},
     PAGES = {1917--1931},
      ISSN = {0002-9947,1088-6850},
   MRCLASS = {52B05 (05A15 26C10 52B20)},
  MRNUMBER = {4378084},
MRREVIEWER = {Andr\'es\ R.\ Vindas-Mel\'endez},
       DOI = {10.1090/tran/8539},
       URL = {https://doi-org/10.1090/tran/8539},
}

@article {MR4259159,
    AUTHOR = {Br\"and\'en, Petter and Solus, Liam},
     TITLE = {Symmetric decompositions and real-rootedness},
   JOURNAL = {Int. Math. Res. Not. IMRN},
  FJOURNAL = {International Mathematics Research Notices. IMRN},
      YEAR = {2021},
    NUMBER = {10},
     PAGES = {7764--7798},
      ISSN = {1073-7928,1687-0247},
   MRCLASS = {12D10 (05A15 13F55 30C15 52B20)},
  MRNUMBER = {4259159},
MRREVIEWER = {Matthias\ Beck},
       DOI = {10.1093/imrn/rnz059},
       URL = {https://doi-org.proxy.bibliotheques.uqam.ca/10.1093/imrn/rnz059},
}

@misc{brittenham2016,
      title={Unimodality via alternating gamma vectors}, 
      author={Charles Brittenham and Andrew Carroll and T. Kyle Petersen and Connor Thomas},
      year={2016},
      eprint={1601.04979},
      archivePrefix={arXiv},
      primaryClass={math.CO},
      url={https://arxiv.org/abs/1601.04979}, 
}

@article{CaiReaddy,
title = {$q$-{S}tirling numbers: A new view},
journal = {Advances in Applied Mathematics},
volume = {86},
pages = {50-80},
year = {2017},
issn = {0196-8858},
doi = {https://doi.org/10.1016/j.aam.2016.11.007},
url = {https://www.sciencedirect.com/science/article/pii/S019688581630121X},
author = {Yue Cai and Margaret A. Readdy},}

@article{Corcino,
  doi = {10.5281/ZENODO.10083989},
  url = {https://zenodo.org/doi/10.5281/zenodo.10083989},
  author = {Corcino,  Roberto},
  title = {On $p,q$-binomial Coefficients},
  journal = {Integers: Electronic Journal Of Combinatorial Number Theory},
  volume = {8},
  number = {A29},  
  publisher = {Zenodo},
  year = {2008},
  copyright = {Creative Commons Attribution 4.0 International}
}

@article{Dey2020,
  title = {Gamma Positivity of the Descent Based {E}ulerian Polynomial in Positive Elements of Classical {W}eyl Groups},
  volume = {27},
  ISSN = {1077-8926},
  url = {http://dx.doi.org/10.37236/9037},
  DOI = {10.37236/9037},
  number = {3},
  journal = {The Electronic Journal of Combinatorics},
  publisher = {The Electronic Journal of Combinatorics},
  author = {Dey,  Hiranya Kishore and Sivasubramanian,  Sivaramakrishnan},
  year = {2020},
  month = Aug 
}

@article{Ferroni2024Chow,
  title = {Hilbert–Poincar\'e series of matroid Chow rings and intersection cohomology},
  volume = {449},
  ISSN = {0001-8708},
  url = {http://dx.doi.org/10.1016/j.aim.2024.109733},
  DOI = {10.1016/j.aim.2024.109733},
  journal = {Advances in Mathematics},
  publisher = {Elsevier BV},
  author = {Ferroni,  Luis and Matherne,  Jacob P. and Stevens,  Matthew and Vecchi,  Lorenzo},
  year = {2024},
  month = July,
  pages = {109733}
}

@article{Foata,
Author = {Dominique Foata and Marcel-Paul Schützenberger},
Title = {Th\'eorie G\'eom\'etrique des Polynômes Eul\'eriens},
Year = {2005},
Eprint = {arXiv:math/0508232},
Howpublished = {Lecture Notes in Mathematics, 138, Berlin, Springer-Verlag, 1970},
}

@article {Fouvry2018,
    AUTHOR = {Fouvry, \'Etienne and Levesque, Claude and Waldschmidt,
              Michel},
     TITLE = {Representation of integers by cyclotomic binary forms},
   JOURNAL = {Acta Arith.},
  FJOURNAL = {Acta Arithmetica},
    VOLUME = {184},
      YEAR = {2018},
    NUMBER = {1},
     PAGES = {67--86},
      ISSN = {0065-1036,1730-6264},
   MRCLASS = {11E76 (12E10)},
  MRNUMBER = {3826641},
MRREVIEWER = {Jing-Jing\ Huang},
       DOI = {10.4064/aa171012-24-12},
       URL = {https://doi-org.proxy.bibliotheques.uqam.ca/10.4064/aa171012-24-12},
}

@article{Gal2005,
  title = {Real Root Conjecture Fails for Five- and Higher-Dimensional Spheres},
  volume = {34},
  ISSN = {1432-0444},
  url = {http://dx.doi.org/10.1007/s00454-005-1171-5},
  DOI = {10.1007/s00454-005-1171-5},
  number = {2},
  journal = {Discrete and Computational Geometry},
  publisher = {Springer Science and Business Media LLC},
  author = {Gal,  Swiatoslaw R.},
  year = {2005},
  month = May,
  pages = {269–284}
}

@misc{Gatzweiler,
  doi = {10.48550/ARXIV.2603.22226},
  url = {https://arxiv.org/abs/2603.22226},
  author = {Gatzweiler,  Mona and Levicán-Santibáñez,  Fabián and Yoshida,  Atsuro},
  title = {Cyclotomic generating functions,  empty weighted complete intersections and positivity},
  publisher = {arXiv},
  year = {2026},
  copyright = {Creative Commons Attribution 4.0 International}
}

@incollection{haglund2015catalan,
  author    = {Haglund, James},
  title     = {{C}atalan Paths and $q,t$-Enumeration},
  booktitle = {Handbook of Enumerative Combinatorics},
  year      = {2015},
  publisher = {CRC Press},
  chapter   = {17},
  pages     = {497--531},
  url       = {https://www2.math.upenn.edu/~jhaglund/preprints/jh3.pdf}
}

@book{haglund,
	author = {Haglund, James},
	isbn = {978-0-8218-4411-3; 0-8218-4411-3},
	mrclass = {05E05 (05A05 05A30 33D52)},
	mrnumber = {2371044},
	mrreviewer = {Michael A. Zabrocki},
	note = {With an appendix on the combinatorics of Macdonald polynomials},
	pages = {viii+167},
	publisher = {American Mathematical Society, Providence, RI},
	series = {University Lecture Series},
	title = {The $(q,t)$-{C}atalan numbers and the space of diagonal harmonics},
	volume = {41},
	year = {2008}}

@article{Han2021,
  title = {Gamma-positivity of derangement polynomials and binomial Eulerian polynomials for colored permutations},
  volume = {182},
  ISSN = {0097-3165},
  url = {http://dx.doi.org/10.1016/j.jcta.2021.105459},
  DOI = {10.1016/j.jcta.2021.105459},
  journal = {Journal of Combinatorial Theory,  Series A},
  publisher = {Elsevier BV},
  author = {Han,  Bin},
  year = {2021},
  month = Aug,
  pages = {105459}
}

@article{Ji2024,
  title = {The binomial-{S}tirling–{E}ulerian polynomials},
  volume = {120},
  ISSN = {0195-6698},
  url = {http://dx.doi.org/10.1016/j.ejc.2024.103962},
  DOI = {10.1016/j.ejc.2024.103962},
  journal = {European Journal of Combinatorics},
  publisher = {Elsevier BV},
  author = {Ji,  Kathy Q. and Lin,  Zhicong},
  year = {2024},
  month = Aug,
  pages = {103962}
}

@misc{kannan2026,
      title={The $\mathbb{S}_n$-equivariant Chow polynomial of the braid matroid}, 
      author={Siddarth Kannan and Lukas Kühne},
      year={2026},
      eprint={2504.19829},
      archivePrefix={arXiv},
      primaryClass={math.AG},
      url={https://arxiv.org/abs/2504.19829}, 
}

@article {LerouxMedicis,
    AUTHOR = {de M\'edicis, Anne and Leroux, Pierre},
     TITLE = {A unified combinatorial approach for {$q$}- (and {$p,q$}-)
              {S}tirling numbers},
   JOURNAL = {J. Statist. Plann. Inference},
  FJOURNAL = {Journal of Statistical Planning and Inference},
    VOLUME = {34},
      YEAR = {1993},
    NUMBER = {1},
     PAGES = {89--105},
      ISSN = {0378-3758,1873-1171},
   MRCLASS = {05A30 (05E10 11B65 11B73)},
  MRNUMBER = {1209992},
MRREVIEWER = {Ira\ Gessel},
       DOI = {10.1016/0378-3758(93)90036-6},
       URL = {https://doi-org.proxy.bibliotheques.uqam.ca/10.1016/0378-3758(93)90036-6},
}

@article{Levy,
  title = {The Irreducible Factorization of {F}ibonacci Polynomials Over $\mathcal{Q}$},
  volume = {39},
  ISSN = {2641-340X},
  url = {http://dx.doi.org/10.1080/00150517.2001.12428710},
  DOI = {10.1080/00150517.2001.12428710},
  number = {4},
  journal = {The Fibonacci Quarterly},
  publisher = {Informa UK Limited},
  author = {Levy,  Dan},
  year = {2001},
  month = Aug,
  pages = {309–319}
}

@misc{liao2026equivariant,
      title={Equivariant gamma-positivity of matroid Chow rings}, 
      author={Hsin-Chieh Liao},
      year={2026},
      eprint={2408.00745},
      archivePrefix={arXiv},
      primaryClass={math.CO},
      url={https://arxiv.org/abs/2408.00745}, 
}

@article{Liu2025,
  title = {Some interlacing properties related to the {E}ulerian and derangement polynomials},
  volume = {162},
  ISSN = {0196-8858},
  url = {http://dx.doi.org/10.1016/j.aam.2024.102776},
  DOI = {10.1016/j.aam.2024.102776},
  journal = {Advances in Applied Mathematics},
  publisher = {Elsevier BV},
  author = {Liu,  Lily Li and Yan,  Xue},
  year = {2025},
  month = Jan,
  pages = {102776}
}

@article{Ma2024,
  title = {Positivity of {N}arayana polynomials and {E}ulerian polynomials},
  volume = {154},
  ISSN = {0196-8858},
  url = {http://dx.doi.org/10.1016/j.aam.2023.102656},
  DOI = {10.1016/j.aam.2023.102656},
  journal = {Advances in Applied Mathematics},
  publisher = {Elsevier BV},
  author = {Ma,  Shi-Mei and Qi,  Hao and Yeh,  Jean and Yeh,  Yeong-Nan},
  year = {2024},
  month = Mar,
  pages = {102656}
}

@book{Macdonald-Book-1995,
	author = {Macdonald, Ian G.},
	edition = {Second},
	isbn = {0-19-853489-2},
	mrclass = {05E05 (05-02 20C30 20C33 20K01 33C80 33D80)},
	mrnumber = {1354144},
	mrreviewer = {John R. Stembridge},
	note = {With contributions by A. Zelevinsky, Oxford Science Publications},
	pages = {x+475},
	publisher = {The Clarendon Press, Oxford University Press, New York},
	series = {Oxford Mathematical Monographs},
	title = {Symmetric functions and {H}all polynomials},
	year = {1995}}

@misc{MorierGenoud,
      title={On $q$-deformed {M}arkov numbers. {C}ohn matrices and perfect matchings with weighted edges}, 
      author={Sam Evans and Perrine Jouteur and Sophie Morier-Genoud and Valentin Ovsienko},
      year={2025},
      eprint={2507.19080},
      archivePrefix={arXiv},
      primaryClass={math.CO},
      url={https://arxiv.org/abs/2507.19080}, 
}

@article{MurtyUnimodal,
author = {Murty, M. Ram},
title = {Unimodal sequences: From {I}saac {N}ewton to {J}une {H}uh},
journal = {International Journal of Number Theory},
volume = {20},
number = {10},
pages = {2453-2476},
year = {2024},
doi = {10.1142/S1793042124300011},
URL = { https://doi.org/10.1142/S1793042124300011},
eprint = { https://doi.org/10.1142/S1793042124300011}
}

@article{NOVELLI2020102019,
title = {Hopf algebras of $m$-permutations, $(m+1)$-ary trees, and $m$-parking functions},
journal = {Advances in Applied Mathematics},
volume = {117},
pages = {102019},
year = {2020},
issn = {0196-8858},
doi = {https://doi.org/10.1016/j.aam.2020.102019},
url = {https://www.sciencedirect.com/science/article/pii/S0196885820300221},
author = {Jean-Christophe Novelli and Jean-Yves Thibon},}

@article{Patrias,
	author = {Patrias, Rebecca and van Willigenburg, Stephanie},
	doi = {10.4310/JOC.2020.v11.n3.a3},
	issn = {1439-6912},
	journal = {Journal of Combinatorics},
	language = {en},
	month = may,
	number = {3},
	pages = {475--493},
	title = {The probability of positivity in symmetric and quasisymmetric functions},
	url = {https://dx.doi.org/10.4310/JOC.2020.v11.n3.a3},
	volume = {11},
	year = {2020}}

@book{Petersen2015,
  title = {Eulerian Numbers},
  ISBN = {9781493930913},
  ISSN = {2296-4894},
  url = {http://dx.doi.org/10.1007/978-1-4939-3091-3},
  DOI = {10.1007/978-1-4939-3091-3},
  journal = {Birkh\"{a}user Advanced Texts Basler Lehrb\"{u}cher},
  publisher = {Springer New York},
  author = {Petersen,  T. Kyle},
  year = {2015}
}

@misc{postnikov2007,
      title={Faces of Generalized Permutohedra}, 
      author={Alexander Postnikov and Victor Reiner and Lauren Williams},
      year={2007},
      eprint={math/0609184},
      archivePrefix={arXiv},
      primaryClass={math.CO},
      url={https://arxiv.org/abs/math/0609184}, 
}

@article{Proctor1984,
  title = {Bruhat Lattices,  Plane Partition Generating Functions,  and Minuscule Representations},
  volume = {5},
  ISSN = {0195-6698},
  url = {http://dx.doi.org/10.1016/S0195-6698(84)80037-2},
  DOI = {10.1016/s0195-6698(84)80037-2},
  number = {4},
  journal = {European Journal of Combinatorics},
  publisher = {Elsevier BV},
  author = {Proctor,  Robert A.},
  year = {1984},
  month = Dec,
  pages = {331–350}
}

@article{Ratliff2004,
  title = {Note on Cyclotomic Polynomials and Prime Ideals},
  volume = {32},
  ISSN = {1532-4125},
  url = {http://dx.doi.org/10.1081/AGB-120027870},
  DOI = {10.1081/agb-120027870},
  number = {1},
  journal = {Communications in Algebra},
  publisher = {Informa UK Limited},
  author = {Ratliff,  Louis J. and Rush,  David E. and Shah,  Kishor},
  year = {2004},
  month = Mar,
  pages = {333–343}
}

@article{Remmel-Wilson-2015,
	author = {Remmel, Jeffrey B. and Wilson, Andrew T.},
	doi = {10.1016/j.jcta.2015.03.012},
	fjournal = {Journal of Combinatorial Theory. Series A},
	issn = {0097-3165},
	journal = {J. Combin. Theory Ser. A},
	mrclass = {05A18 (05A15)},
	mrnumber = {3345306},
	mrreviewer = {Damir Yeliussizov},
	pages = {242--277},
	title = {An extension of {M}ac{M}ahon's equidistribution theorem to ordered set partitions},
	url = {https://doi.org/10.1016/j.jcta.2015.03.012},
	volume = {134},
	year = {2015}}

@article{Sagan2010,
  title = {Combinatorial Interpretations of Binomial Coefficient Analogues Related to Lucas Sequences},
  volume = {10},
  ISSN = {1867-0652},
  url = {http://dx.doi.org/10.1515/integ.2010.052},
  DOI = {10.1515/integ.2010.052},
  number = {6},
  journal = {Integers},
  publisher = {Walter de Gruyter GmbH},
  author = {Sagan,  Bruce E. and Savage,  Carla D.},
  year = {2010},
  month = Jan 
}

@article {ShareshianWachs,
    AUTHOR = {Shareshian, John and Wachs, Michelle L.},
     TITLE = {Gamma-positivity of variations of {E}ulerian polynomials},
   JOURNAL = {J. Comb.},
  FJOURNAL = {Journal of Combinatorics},
    VOLUME = {11},
      YEAR = {2020},
    NUMBER = {1},
     PAGES = {1--33},
      ISSN = {2156-3527,2150-959X},
   MRCLASS = {05E05 (11B65 11B68 52B20)},
  MRNUMBER = {4015851},
MRREVIEWER = {Ivica\ Martinjak},
       DOI = {10.4310/JOC.2020.v11.n1.a1},
       URL = {https://doi-org/10.4310/JOC.2020.v11.n1.a1},
}

@article{StanleyCyclo,
  title = {Some enumerative applications of cyclotomic polynomials},
  volume = {6},
  ISSN = {2710-2335},
  url = {http://dx.doi.org/10.54550/ECA2026V6S1R4},
  DOI = {10.54550/eca2026v6s1r4},
  journal = {Enumerative Combinatorics and Applications},
  publisher = {University of Haifa},
  author = {Stanley,  Richard P.},
  year = {2025},
}

@article{sagan2020lucas-17f, 
  year     = {2020}, 
  title    = {Lucas atoms}, 
  author   = {Sagan, Bruce E. and Tirrell, Jordan}, 
  journal  = {Advances in Mathematics}, 
  issn     = {0001-8708}, 
  doi      = {10.1016/j.aim.2020.107387}, 
    pages    = {107387}, 
  volume   = {374}
}

@article{VESSENES2004579,
title = {Generalized {F}oulkes' Conjecture and tableaux construction},
journal = {Journal of Algebra},
volume = {277},
number = {2},
pages = {579-614},
year = {2004},
issn = {0021-8693},
doi = {https://doi.org/10.1016/j.jalgebra.2004.01.017},
url = {https://www.sciencedirect.com/science/article/pii/S0021869304001073},
author = {Rebecca Vessenes},
}

@article{Wachs1991,
  title = {$p,q$-{S}tirling numbers and set partition statistics},
  volume = {56},
  ISSN = {0097-3165},
  url = {http://dx.doi.org/10.1016/0097-3165(91)90020-H},
  DOI = {10.1016/0097-3165(91)90020-h},
  number = {1},
  journal = {Journal of Combinatorial Theory,  Series A},
  publisher = {Elsevier BV},
  author = {Wachs,  Michelle and White,  Dennis},
  year = {1991},
  month = Jan,
  pages = {27–46}
}

@article{Warnaar2010,
  title = {A $q$-rious positivity},
  volume = {81},
  ISSN = {1420-8903},
  url = {http://dx.doi.org/10.1007/s00010-010-0055-9},
  DOI = {10.1007/s00010-010-0055-9},
  number = {1-2},
  journal = {Aequationes mathematicae},
  publisher = {Springer Science and Business Media LLC},
  author = {Warnaar,  S. Ole and Zudilin,  W.},
  year = {2010},
  month = Oct,
  pages = {177–183}
}

@article{Zanello2018,
  title = {On {B}ergeron's Positivity Problem for $q$-Binomial Coefficients},
  volume = {25},
  ISSN = {1077-8926},
  url = {http://dx.doi.org/10.37236/7358},
  DOI = {10.37236/7358},
  number = {2},
  journal = {The Electronic Journal of Combinatorics},
  publisher = {The Electronic Journal of Combinatorics},
  author = {Zanello,  Fabrizio},
  year = {2018},
  month = apr 
}

\end{document}